\definecolor{dkgreen}{rgb}{0,0.6,0}
\definecolor{gray}{rgb}{0.5,0.5,0.5}
\definecolor{mauve}{rgb}{0.58,0,0.82}
\tiny\color{gray},
\theoremstyle{plain}
\newtheorem{theorem}{Theorem}[section]
\newtheorem*{theorem*}{Theorem}
\newtheorem{lemma}[theorem]{Lemma}
\newtheorem{proposition}[theorem]{Proposition}
\newtheorem{corollary}[theorem]{Corollary}
\newtheorem{conjecture}[theorem]{Conjecture}
\theoremstyle{definition}
\newtheorem{definition}[theorem]{Definition}
\newtheorem{example}[theorem]{Example}
\theoremstyle{remark}
\newtheorem{remark}[theorem]{Remark}
\numberwithin{equation}{section}
\newcommand{\C}{\mathbb{C}}
\newcommand{\R}{\mathbb{R}}
\newcommand{\Z}{\mathbb{Z}}
\newcommand{\F}{\mathbb{F}}
\newcommand{\M}{\mathcal{M}}
\newcommand{\V}{\mathbb{V}}
\newcommand{\WW}{\mathbf{W}}
\newcommand{\X}{\mathbf{X}}
\newcommand{\Ss}{\mathbb{S}}
\newcommand{\mc}{\mathcal}
\DeclareMathOperator{\Tr}{Tr}
\DeclareMathOperator{\rk}{rank}
\DeclareMathOperator{\id}{Id}
\DeclareMathOperator{\E}{\mathcal{E}}
\DeclareMathOperator{\A}{\mathbf{A}}
\DeclareMathOperator{\Hom}{Hom}
\DeclareMathOperator{\e}{\mathbf{e}}
\DeclareMathOperator{\End}{\mathrm{End}}
\newcommand{\be}{\begin{equation}}
\newcommand{\ee}{\end{equation}}
\def\beq{\begin{equation}}
\def\eeq{\end{equation}}
\def\beq{\begin{equation}}
\def\eeq{\end{equation}}
\def\bea{\begin{eqnarray*}}
\def\eea{\end{eqnarray*}}
\title[Connections, frame flows, and polynomial maps]{Isospectral connections, ergodicity of frame flows, and polynomial maps between spheres\\
\medskip
Connexions isospectrales, ergodicité des flots de repères et applications polynomiales entre sphères}
\author{Mihajlo Ceki\'c}
\address{Institut f\"ur Mathematik, Universit\"at Z\"urich, Winterthurerstrasse 190, CH-8057 Z\"urich, Switzerland}
\email{mihajlo.cekic@math.uzh.ch}
\author{Thibault Lefeuvre}
\address{Université de Paris and Sorbonne Université, CNRS, IMJ-PRG, F-75006 Paris, France.}
\email{tlefeuvre@imj-prg.fr}
\begin{document}

\dedicatory{Dedicated to the memory of Steve Zelditch}

\maketitle

\begin{abstract}
We show that on closed negatively curved Riemannian manifolds with simple length spectrum, the spectrum of the Bochner Laplacian determines both the isomorphism class of the vector bundle and the connection up to gauge under a low-rank assumption. We also show that flows of frames on low-rank frame bundles extending the geodesic flow in negative curvature are ergodic whenever the bundle admits no holonomy reduction. This is achieved by exhibiting a link between these problems and the classification of polynomial maps between spheres in real algebraic geometry.\\

Nous montrons que, sur les variétés riemanniennes fermées à courbure strictement négative et à spectre de longueur simple, le spectre du laplacien de Bochner détermine à la fois la classe d'isomorphisme du fibré vectoriel et la connexion à jauge près sous une hypothèse de rang faible. De plus, nous montrons que les flots partiellement hyperboliques obtenus comme extensions du flot géodésique à certains fibrés des repères de faible rang (sur des variétés à courbure strictement négative) sont ergodiques dès lors que le fibré n'admet aucune réduction d'holonomie. Ces résultats sont obtenus en établissant un lien entre ces problèmes et la classification des applications polynomiales entre sphères en géométrie \linebreak algébrique réelle.
\end{abstract}

\section{Introduction}

\label{section:intro}

The purpose of this article is to exhibit a link between the following three \emph{a priori} unrelated problems:
\begin{itemize}
\item In real algebraic geometry: the (non-)existence of polynomial mappings between spheres,
\item In dynamical systems: the ergodicity of certain partially hyperbolic dynamical systems obtained as isometric extensions of the geodesic flow over negatively curved Riemannian manifolds,
\item In spectral theory: Kac's isospectral problem ``\emph{Can one hear the shape of a drum}?'' for the Bochner (or connection) Laplacian.
\end{itemize}

\subsection{Background} In order to state the main theorems, we introduce some notation.

\subsubsection{Polynomial maps between spheres} 
Our main results will be formulated under a certain \emph{low-rank} assumption, which is governed by the quantity $q(n) \in \mathbb{Z}_{\geq 1}$ from real algebraic geometry. Namely, for $n \in \mathbb{Z}_{\geq 1}$, define $q(n)$ to be the \emph{least integer} $r \in \mathbb{Z}_{\geq 1}$ for which there exists a \emph{non-constant polynomial map} 
\[
	\mathbb{S}^n \to \mathbb{S}^r,
\] 
where $\mathbb{S}^n \subset \mathbb{R}^{n + 1}$ is the unit sphere; by polynomial map we mean the restriction to $\mathbb{S}^n$ of a polynomial map $\mathbb{R}^{n + 1} \to \mathbb{R}^{r + 1}$. It is known that $q(n)$ is even for $n \geq 2$, and a result due to Wood \cite{Wood-68} implies
\[
	n/2 < q(n) \leq  n, \qquad q(2^k) = 2^k, \quad k \in \mathbb{Z}_{\geq 0}.
\]
The first few values are given by:
\[
q(2) = q(3) = 2, \quad q(4) = \dotso = q(7) = 4, \quad q(8) = \dotso = q(15) = 8.
\]
For more properties of $q(n)$, see \S \ref{section:polynomial} below.

\subsubsection{Connections} Let $M$ be a smooth closed connected manifold. Let $\mathbf{A}^{\R}$ (resp.\ $\mathbf{A}^{\C}$) be the \emph{moduli space} of orthogonal connections on oriented Euclidean vector bundles (resp.\ unitary connections on Hermitian vector bundles) over $M$. Write $\mathbf{A}^{\F}_{r} \subset \mathbf{A}^{\F}$ (resp.\ $\mathbf{A}^{\F}_{\leq r} \subset \mathbf{A}^{\F}$) for the moduli space restricted to bundles of rank $r$ (resp.\ $\leq r$), where $\F = \R$ or $\C$. A point of $\mathbf{A}^\F$ corresponds to a pair $([E], [\nabla^E])$, where $[E] \to M$ is an equivalence class of vector bundles up to isomorphisms and $[\nabla^{E}]$ is an equivalence class of metric connections on $E$ up to \emph{gauge-equivalence}; we refer the reader to \cite[Chapter 2]{Donaldson-Kronheimer-90} for more details on connections and their moduli spaces.

\subsection{Isospectral connections}
Assume $M$ is endowed with a Riemannian metric $g$. Given $a = ([E], [\nabla^E]) \in \A^{\F}$, we can form the \emph{Bochner} (or \emph{connection}) \emph{Laplacian} $\Delta_E := (\nabla^{E})^* \nabla^E$. While this operator depends on the choice of $(E, \nabla^E)$, its unitary conjugacy class depends only on $a$; note also that $\Delta_E$ depends on the metric $g$ since the formal adjoint $(\nabla^{E})^*$ of $\nabla^{E}$ does so. It is a formally self-adjoint operator on $L^2(M,E)$ with discrete non-negative spectrum. Let 
\[
\mathrm{spec}_{L^2}(\Delta_E) := (\lambda_j(\nabla^E))_{j \geq 0}
\]
be the sequence of eigenvalues of $\Delta_E$, counted with multiplicities and sorted in increasing order; note that $\lambda_0(\nabla^E) \geq 0$.

Define the \emph{spectrum map} as
\begin{equation}
\label{equation:spectrum}
\mathbf{S} \colon \A^{\F} \ni a \mapsto \mathrm{spec}_{L^2}(\Delta_E) \in \R_{\geq 0}^{\Z_{\geq 0}}.
\end{equation}
The map $\mathbf{S}$ is well defined on the moduli space, that is, it does not depend on a choice of gauge for the connection. Following the celebrated paper of Kac \cite{Kac-66}, ``\emph{Can one hear the shape of a drum?}'', one can ask the following question: is the spectrum map \eqref{equation:spectrum} injective? In other words, does the spectrum of the Bochner Laplacian determine the connection up to gauge-equivalence? 

This is the analogous question to the classical inverse spectral problem of recovering a metric $g$ from the knowledge of the spectrum of the usual Hodge Laplacian $\Delta_g$ acting on functions. Among hyperbolic surfaces, it is known that the spectrum of the Hodge Laplacian does not determine the metric up to isometries by a result of Vigneras \cite{Vigneras-80}. Nevertheless, Sharafutdinov \cite{Sharafutdinov-09} proved that the spectrum map is locally injective in a neighbourhood of a locally symmetric Riemannian space of negative curvature. Apart from negatively curved spaces, other counterexamples were provided by Milnor \cite{Milnor-64}, Sunada \cite{Sunada-85} using covering spaces, and counterexamples to Kac's isospectral question also exist for piecewise smooth planar domains \cite{Gordon-Webb-Wolpert-92} (for the Dirichlet Laplacian). The problem of characterizing $1$-parameter families of isospectral metrics was also studied by various authors and was resolved in negative curvature \cite{Guillemin-Kazhdan-80, Croke-Sharafutdinov-98, Paternain-Salo-Uhlmann-14-2}. It is deeply connected to the \emph{marked length spectrum} conjecture (also known as the Burns-Katok conjecture \cite{Burns-Katok-85}), see \cite{Croke-90,Otal-90,Guillarmou-Lefeuvre-19}. We refer to \cite{Zelditch-04,Zelditch-14} for further details about Kac's classical isospectral problem for metrics.

Recall that a metric is said to have \emph{simple length spectrum} if all closed geodesics have different lengths. This is a generic condition with respect to the metric, see \cite{Abraham-70,Anosov-82}. Before stating our result on the injectivity of the spectrum map \eqref{equation:spectrum} on low-rank vector bundles, we set 
\[
	q_{\R}(n) := q(n), \quad q_{\C}(n) := \tfrac{1}{2}q(n).
\] 

\begin{theorem}
\label{theorem:main}
Let $n \in \mathbb{Z}_{\geq 2}$ and $\F = \R$ or $\C$. Then for all closed negatively curved Riemannian manifolds $(M^{n+1}, g)$ with simple length spectrum, the spectrum map is injective on $\F$-vector bundles of rank $\leq q_{\F}(n)$:
\[
	\mathbf{S} \colon \mathbf{A}^{\F}_{\leq q_{\F}(n)} \to \R_{\geq 0}^{\Z_{\geq 0}}.
\]
\end{theorem}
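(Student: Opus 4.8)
The plan is to recast the isospectral problem dynamically. Suppose $a_0 = ([E_0],[\nabla^{E_0}])$ and $a_1 = ([E_1],[\nabla^{E_1}])$ in $\mathbf{A}^{\F}_{\leq q_{\F}(n)}$ satisfy $\mathbf{S}(a_0) = \mathbf{S}(a_1)$. The first step is a standard wave-trace / heat-trace computation: the spectrum of $\Delta_{E}$ determines the dimension of $M$, the volume, the rank $r$ of $E$, and, crucially, via the Duistermaat--Guillemin trace formula on a negatively curved manifold with \emph{simple} length spectrum, the full set of traces of parallel transports (holonomies) $\mathrm{Tr}\big(\mathrm{Hol}_{\nabla^E}(\gamma)\big)$ along all closed geodesics $\gamma$. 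Thus $\mathbf{S}(a_0) = \mathbf{S}(a_1)$ forces $r_0 = r_1 =: r$ and $\mathrm{Tr}\,\mathrm{Hol}_{\nabla^{E_0}}(\gamma) = \mathrm{Tr}\,\mathrm{Hol}_{\nabla^{E_1}}(\gamma)$ for every closed geodesic $\gamma$. (Since closed geodesics are dense in the unit tangent bundle and the flow is transitive, one expects the same identity to propagate to the parallel transport cocycle along all orbit segments, not just closed ones.)

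The second step is to reinterpret equality of holonomy traces as a statement about the frame flow. Pull the two bundles $E_0, E_1$ back to the unit tangent bundle $SM$; the connections $\nabla^{E_i}$ together with the geodesic flow define isometric extensions $\Phi_t^{i}$ (frame flows) on the orthonormal frame bundles $F_{\F}E_i \to SM$, with structure group $O(r)$ or $U(r)$. Forming the product cocycle on $E_0 \oplus E_1^{*}$ (or the tensor bundle $\Hom(E_1, E_0)$), the matching of all holonomy traces says precisely that the flow-invariant function $(x,v)\mapsto \mathrm{Tr}$ of the transfer operator has an eigenvalue $1$ behaviour forcing a nontrivial invariant section. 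Concretely, I would use a Livšic-type / Parry-type argument (the "cohomological equation" for cocycles over Anosov flows, cf. the arguments underlying transitivity of the frame flow) to conclude that the two cocycles are \emph{conjugate}: there is a measurable — hence, by Livšic regularity, smooth — bundle isomorphism $P \colon E_0 \to E_1$ with $P \nabla^{E_0} P^{-1} = \nabla^{E_1}$. If this works, then $[E_0] = [E_1]$ and $[\nabla^{E_0}] = [\nabla^{E_1}]$, i.e. $a_0 = a_1$, which is exactly injectivity.

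The place where the rank hypothesis $r \leq q_{\F}(n)$ enters — and the main obstacle — is in upgrading "equal traces of holonomies" to "conjugate holonomy cocycles." Equality of traces of $\mathrm{Hol}(\gamma)$ for all $\gamma$ does \emph{not} in general imply the representations are conjugate; one must rule out the possibility that the closure of the holonomy group (equivalently, the structure group of the frame flow after reduction) is a proper subgroup $H \subsetneq O(r)$ or $U(r)$ with a nontrivial "fake" character coincidence. This is governed by the transitivity group of the partially hyperbolic frame flow, and the key external input is the classification of transitive actions on spheres together with the Berger-type list: a proper closed subgroup $H \subset SO(r)$ acting transitively on $\mathbb{S}^{r-1}$ gives rise, via the orbit map through the unstable manifolds, to a non-constant polynomial (or at least real-algebraic, then polynomial after Wood-type reduction) map $\mathbb{S}^{n} \to \mathbb{S}^{r-1}$ — or in the unitary case $\mathbb{S}^{n} \to \mathbb{S}^{2r-1}$, hence $\mathbb{S}^{n}\to \mathbb{S}^{r-1}$ again after one halves. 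By definition of $q(n)$, no such non-constant polynomial map exists when $r - 1 < q(n)$, i.e. exactly in the range $r \leq q_{\F}(n)$. Hence in this low-rank regime the holonomy group must be \emph{full}, no holonomy reduction occurs, the frame flow is ergodic (this is presumably the companion theorem of the paper), and the trace-matching then does force cocycle conjugacy. Carrying out the algebraic-geometry reduction carefully — extracting an honest polynomial map between spheres from a holonomy reduction of a frame flow over a negatively curved base, and verifying it is non-constant — is the technical heart of the argument and the step I expect to demand the most work.
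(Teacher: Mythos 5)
Your first step (the Duistermaat--Guillemin trace formula under the simple length spectrum hypothesis, giving equality of the holonomy traces $\Tr\,\mathrm{Hol}_{\nabla^{E_i}}(\gamma)$ along all closed geodesics, i.e.\ equality of the Wilson loop data) matches the paper. After that the argument goes astray. The difficulty you identify --- ``equal traces need not give conjugate representations, so one must rule out a holonomy reduction'' --- is not the actual obstruction: for the representations of Parry's free monoid into $\mathrm{SO}(r)$ or $\mathrm{U}(r)$ built from stable/unstable holonomies, equality of characters (propagated from closed geodesics to homoclinic orbits by shadowing) \emph{does} imply the representations are isomorphic, by standard character theory, and the non-Abelian Liv\v sic theorem (Theorem \ref{theorem:livsic}) then produces a smooth \emph{flow-invariant} isometry $p \in C^\infty(SM,\mathrm{Hom}(\pi^*E_0,\pi^*E_1))$ --- over $SM$, not over $M$. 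The genuine gap is the descent: nothing in your sketch makes $p$ constant along the spherical fibers of $SM \to M$, and without that one only gets that the pulled-back pairs are flow-equivariantly isomorphic over $SM$, not that $a_0 = a_1$. Proposition \ref{proposition:2n} ($\Lambda^+$ versus $\Lambda^-$ in even dimensions) shows this is a real phenomenon: equal Wilson loops, a flow-invariant fiberwise isometry over $SM$ (fiberwise the Hopf map, hence non-constant), yet the connections are not gauge-equivalent; an argument that stops at ``cocycle conjugacy'' would wrongly prove injectivity there as well.

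Moreover, your proposed use of the rank hypothesis --- that $r \leq q_{\F}(n)$ forces full holonomy, hence ergodicity of the frame flow, hence conjugacy --- is incorrect: low-rank bundles can be flat, reducible, or otherwise have reduced holonomy, and the theorem covers all of them; in the companion ergodicity result (Theorem \ref{theorem:ergodicity}) the absence of holonomy reduction is a \emph{hypothesis}, never a consequence of low rank, and ergodicity plays no role in the isospectral statement. In the paper the low-rank condition enters elsewhere: one first needs the analytic input that flow-invariant sections over negatively curved $SM$ have finite Fourier degree in the vertical variable (Proposition \ref{theorem:finite}, via the twisted Pestov identity), so that restricting $p$ to a fiber $S_xM \simeq \Ss^n$ yields an honest polynomial map $\Ss^n \to \mathrm{SO}(r)$ (or $\mathrm{SU}(r)$ after normalizing the determinant); then Lemma \ref{lemma:determine} together with $r \leq q_{\F}(n)$ forces this map to be constant, so $p$ has zero Fourier degree, descends to a parallel isometry in $C^\infty(M,\mathrm{Hom}(E_0,E_1))$, and gives the gauge-equivalence. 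Your sketch contains neither the finite-Fourier-degree step (without it there is no polynomiality to exploit) nor the correct object to which the polynomial-map obstruction is applied, so as written the proof does not go through.
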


This seems to be the first result where Kac's inverse spectral problem can be fully solved with a full infinite-dimensional moduli space of geometric objects (here, connections on low-rank vector bundles). In particular, the situation is different from the metric case, where counterexamples are known to exist as discussed above. Earlier results obtained by the authors in \cite{Cekic-Lefeuvre-21-1} could only prove injectivity of the spectrum map \eqref{equation:spectrum} on small open neighborhoods in the moduli space $\mathbf{A}^{\F}$, but in arbitrary rank: for instance, on a neighborhood of a generic connection, or near flat connections. By means of a trace formula, this is intimately related to the \emph{holonomy inverse problem}, that is, the determination of a connection from the traces of parallel transports along closed geodesics. See Proposition \ref{proposition:2n} below for counterexamples to the injectivity of the holonomy inverse problem in even dimensions.

	Also note that, as a (very) particular instance of Theorem \ref{theorem:main}, one can take connections of the form $(M \times \C, d-iA)$ on the trivial complex line bundle, where $A \in C^\infty(M,T^*M)$ is a real-valued $1$-form; the Bochner Laplacian $(d-iA)^*(d-iA)$ is also known as the \emph{magnetic Laplacian} in this case. The result then says that one can recover the $1$-form $A$ up to gauge (i.e. up to a term of the form $i\varphi^{-1} d\varphi$ for some $\varphi \in C^\infty(M,\Ss^1)$) from the knowledge of the spectrum of the magnetic Laplacian. In particular, this allows to recover the magnetic field $B := dA$ from the spectrum.

In the general case, counterexamples to the injectivity of the spectrum map \eqref{equation:spectrum} were constructed by Kuwabara \cite{Kuwabara-90} using the Sunada method \cite{Sunada-85} but on coverings of a given Riemannian manifold (the simple length spectrum condition is thus violated). As we shall see in \S\ref{section:spectral}, it is natural to conjecture the following: 

\begin{conjecture}
The spectrum map \eqref{equation:spectrum} is injective on odd-dimensional closed Riemannian manifolds $(M^{2n+1},g)$ with negative sectional curvature and simple length spectrum.
\end{conjecture}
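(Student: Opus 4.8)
Since this is a conjecture, what follows is a research program rather than a proof; it runs parallel to the proof of Theorem~\ref{theorem:main}, the point being to remove the low-rank hypothesis when $\dim M$ is odd.

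\emph{Step 1: from the spectrum to holonomy data (parity-insensitive).} In negative curvature the geodesic flow on $SM$ is Anosov with only isolated closed orbits, so the wave trace of the Bochner Laplacian $\Delta_E$ decomposes into contributions indexed by the closed geodesics, the leading coefficient at a length $\ell(\gamma)$ being a Poincaré factor times $\Tr\big(\mathrm{Hol}_{\nabla^E}(\gamma)\big)$; the simple length spectrum hypothesis separates these, so from $\mathbf{S}(a)$ one reads off $\rk E$, the length spectrum, and, for each closed geodesic $\gamma$, the quantity $\re\,\Tr(\mathrm{Hol}_{\nabla^E}(\gamma))$ (the $\re$ being superfluous for orthogonal connections). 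This step uses nothing about $\dim M$, so the conjecture reduces to the \emph{holonomy inverse problem in arbitrary rank}: on a closed negatively curved $(M^{2n+1},g)$ with simple length spectrum, two metric $\F$-bundles $(E_1,\nabla^{E_1})$, $(E_2,\nabla^{E_2})$ with the same parallel-transport traces along all closed geodesics define the same point of $\A^\F$.

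\emph{Step 2: microlocal reduction to first-band resonant states.} Lift both connections to $SM$ and compare the twisted Ruelle zeta functions $\zeta_{\nabla^{E_1}},\zeta_{\nabla^{E_2}}$: equality of holonomy traces forces $\zeta_{\nabla^{E_1}}=\zeta_{\nabla^{E_2}}$, and the theory of Pollicott--Ruelle resonances then produces a nonzero smooth section $u$ of $\pi^*\Hom(E_1,E_2)$ killed by the twisted geodesic vector field. Expanding $u=\sum_{m\ge 0}u_m$ in spherical-harmonic degree along the fibers $\Ss^{2n}$ of $SM\to M$, a surviving degree-$0$ part $u_0$ is the pull-back of a $\nabla$-parallel section of $\Hom(E_1,E_2)$ on $M$, from which, using the trace data once more, one builds an isomorphism $E_1\cong E_2$ conjugating $\nabla^{E_1}$ to $\nabla^{E_2}$. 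Hence the whole question is a \emph{first-band rigidity statement}: on an odd-dimensional negatively curved manifold a smooth solution of the twisted transport equation should be concentrated in degree $0$. Via a twisted Pestov energy identity this becomes a fiberwise positivity whose indefinite part pairs the curvature of $(M,g)$ with $F_{\nabla^{E_1}\oplus\nabla^{E_2}}$ acting on the degree-$1$ component $u_1$.

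\emph{Step 3: the odd-dimensional input and the main obstacle.} Unwinding the constraint, a nontrivial $u_1$ furnishes at each $x\in M$ a nonzero conformal $\F$-linear map $T_x\colon T_xM\to\Hom(E_{1,x},E_{2,x})$, hence a nonconstant polynomial map $\Ss^{2n}=S_xM\to\Ss^{N}$ with $N$ governed by the ranks; when $N<q(2n)$ this is impossible, which is exactly how Theorem~\ref{theorem:main} is proved, while in even ambient dimension such configurations genuinely occur — built from Hopf-type quadratic maps on the odd-dimensional fiber spheres — and underlie the counterexamples of Proposition~\ref{proposition:2n}. The belief behind the conjecture is that for $\dim M=2n+1$ the fiber spheres $\Ss^{2n}$ are even-dimensional and carry no polynomial map that can be completed, compatibly with the transport equation, to a \emph{global} resonant state: one expects a parity/cohomological obstruction (nonvanishing Euler class of the fibers, absence of fiberwise almost-complex structures) forcing the Pestov form to be strictly signed, hence $T_x\equiv 0$ and $u=u_0$ — after which the usual bootstrap to kill the higher bands $m\ge 2$ and a connectedness argument in $\A^\F$ promote infinitesimal rigidity to an honest gauge-equivalence. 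This is the spectral avatar of the expected all-rank, odd-dimensional strengthening of the paper's ergodicity theorem. The hard part is precisely Step~3: once the rank is unbounded the numerical argument ``$N<q(2n)$, contradiction'' is unavailable, so it must be replaced by a structural dichotomy — a genuine first-band resonant state forces a holonomy reduction, contradiction — valid for even-dimensional base fibers, and it is not at all clear that the polynomial maps $\Ss^{2n}\to\Ss^{N}$ that \emph{do} exist (Veronese- and iterated-join type, with $N$ large) cannot after all be assembled over $M$ into an exotic resonant state; isolating the correct topological or representation-theoretic obstruction is the crux, with the locus where $\rk T_x$ degenerates and the control of the full resonance tower as the remaining, more technical, difficulties.
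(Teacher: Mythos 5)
The statement you are addressing is a conjecture: the paper offers no proof of it, and explicitly remarks that ``it is not clear yet at all how to get rid of the low-rank assumption'' in odd dimensions. Your Steps 1--2 faithfully reproduce the reduction that the paper does carry out for its \emph{theorems}: the Duistermaat--Guillemin trace formula with simple length spectrum reduces isospectrality to equality of the geodesic Wilson loop operator \eqref{equation:wilson}, and from equal holonomy traces one manufactures a flow-invariant isometry $p\in C^\infty(SM,\Hom(\pi^*E_1,\pi^*E_2))$ which, by Proposition \ref{theorem:finite}, has finite Fourier degree, so that the whole problem becomes showing $p$ is fiberwise constant. (Two small deviations: the paper produces $p$ via the non-Abelian Liv\v sic theorem \ref{theorem:livsic} and equality of characters of the Parry-monoid representations, not via twisted Ruelle zeta functions; and the obstruction is not a ``first-band'' degree-one statement --- the invariant section may a priori have large but finite Fourier degree, and the paper kills \emph{all} positive degrees at once by the non-existence of non-constant polynomial maps $\Ss^n\to\mathrm{SO}(r)$ or $\mathrm{U}(r)$ for $r\leq q_{\F}(n)$, Lemma \ref{lemma:determine}.)

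The genuine gap is your Step 3, and you name it yourself: once the rank is unbounded, non-constant polynomial maps $\Ss^{2n}\to\mathrm{SO}(r)$ and $\Ss^{2n}\to\mathrm{U}(r)$ certainly exist (already the reflection map \eqref{equation:natural} gives a quadratic map $\Ss^{2n}\to\mathrm{SO}(2n+1)$), so no counting argument of the form ``$N<q(2n)$, contradiction'' is available, and the ``parity/cohomological obstruction'' you invoke (Euler class of the fiber, absence of fiberwise almost-complex structures, strict signedness of a Pestov form) is a conjecture about a conjecture, not an argument: nothing in your write-up rules out that such fiberwise polynomial isometries assemble into a global flow-invariant section over an odd-dimensional $M$. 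Indeed Proposition \ref{proposition:2n} shows that the analogous assembly does happen over even-dimensional manifolds precisely because the fiber spheres are then odd-dimensional and carry Hopf-type maps, so any proof must exploit the even-dimensionality of the fibers $S_xM\simeq\Ss^{2n}$ in a structural way (e.g.\ showing that a genuinely non-constant invariant fiberwise polynomial forces a holonomy or topological reduction that can be excluded), and this mechanism is exactly what is missing --- both in your proposal and in the literature. So the proposal is an accurate map of the known territory but does not establish the statement.
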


However, it is not clear yet at all how to get rid of the low-rank assumption in Theorem \ref{theorem:main} on odd-dimensional manifolds.

\subsection{Ergodicity of frame flows} Using similar techniques as the ones involved in the proof of Theorem \ref{theorem:main}, we can also prove results concerning the ergodicity of a class of dynamical systems.

Let $(M^{n+1},g)$ be a smooth closed connected Riemannian manifold with negative sectional curvature and let $\pi \colon SM \to M$ be its unit tangent bundle. Given $a = ([E],[\nabla^E]) \in \A_r^{\F}$, consider the pullback pair
\[
\pi^*a = \left([\pi^*E],[\pi^*\nabla^E]\right) =: ([\E], [\nabla^{\E}])
\]
of a vector bundle and a connection over $SM$. Let $(\varphi_t)_{t \in \R}$ be the geodesic flow on $SM$ and define $F\E$ to be the principal $G$-bundle (with $G = \mathrm{SO}(r)$ or $\mathrm{U}(r)$) of orthonormal frames of $\E$. Define the \emph{frame flow} $\Phi_t \colon F\E \to F\E$ of $\nabla^{\E}$ by
\begin{equation}\label{eq:frame-flow}
	\Phi_t(x, v, \e_1, \dotsc, \e_r) := (\varphi_t(x, v), \mc{P}_{\gamma(t)} \e_1, \dotsc, \mc{P}_{\gamma(t)} \e_r),
\end{equation}
where $(\e_1, \dotsc, \e_r)$ is an orthonormal frame of $E_x$ and $\mc{P}_{\gamma(t)}$ denotes the parallel transport of $\nabla^{E}$ along the geodesic segment $\gamma(s) = \pi \circ \varphi_s(x, v)$ for $s \in [0, t]$. This flow extends $\varphi_t$ in the sense that $\varphi_t \circ p = p \circ \Phi_t$, where $p \colon F\E \to SM$ is the projection, and it commutes with the right-action of the group $G$. The geodesic flow preserves a natural smooth measure called the \emph{Liouville measure}, and $(\Phi_t)_{t \in \R}$ thus preserves a canonical measure $\mu$ on $F\E$ obtained locally as the product of the Liouville measure with the Haar measure on the group $G$. Note that the flows $(\Phi_t)_{t \in \mathbb{R}}$ obtained for connections in the class $([E], [\nabla^E])$ are smoothly conjugate to each other by volume-preserving maps and hence may be identified.

Our aim is to study the ergodicity of $(\Phi_t)_{t \in \R}$ with respect to $\mu$. Such flows are typical examples of \emph{partially hyperbolic dynamical systems} which are still a vast topic of investigation, see \cite{Hasselblatt-Pesin-06} for an introduction for instance. A standard example of such a flow is given by the \emph{geodesic frame flow} over negatively curved manifolds, defined as in \eqref{eq:frame-flow} by taking frames $(\e_1, \dotsc, \e_{n})$ of $v^\perp \subset T_xM$ and parallelly transporting them with respect to the Levi-Civita connection $\nabla^{\mathrm{LC}}$. Note that this is \emph{not} a special case of the above construction for $E = TM$ and $\nabla^{E} = \nabla^{\mathrm{LC}}$, since instead of the pullback bundle $\pi^*TM$ for this flow we are using its subbundle $\E := s^\perp$ where $s(x, v) = v$ is the tautological section of $\pi^*TM$.

It was proved by Brin \cite{Brin-75-1} and Brin--Gromov \cite{Brin-Gromov-80} that the geodesic frame flow is ergodic on all odd-dimensional negatively curved manifolds of dimension different from $7$. More recently, it was proved by the authors together with Moroianu--Semmelmann \cite{Cekic-Lefeuvre-Moroianu-Semmelmann-21} that this flow is ergodic on even-dimensional manifolds when the manifold has nearly $0.25$-pinched (resp.\ $0.5$-pinched) negative sectional curvature and the dimension is not divisible by $4$ (resp.\ is divisible by $4$). Similar results were also obtained in \cite{Cekic-Lefeuvre-Moroianu-Semmelmann-21} in dimension $7$. This almost answered a conjecture of Brin \cite[Conjecture 2.6]{Brin-82}. Previous partial results can be found in \cite{Brin-Karcher-83, Burns-Pollicott-03}.

Given a pair $([E], [\nabla^{E}]) \in \mathbf{A}_r^{\F}$ over $M$, say that it admits a \emph{holonomy reduction} if the (full) \emph{holonomy group} $\mathrm{Hol}(E,\nabla^E)$ is strictly contained in $\mathrm{SO}(r)$ (when $\F = \mathbb{R}$) or $\mathrm{U}(r)$ (when $\F = \mathbb{C}$). In particular, taking $E = TM$ and $\nabla^E = \nabla^{\mathrm{LC}}$, one recovers the usual \emph{Riemannian} holonomy group $\mathrm{Hol}(M) := \mathrm{Hol}(TM,\nabla^{\mathrm{LC}})$. It will become clear that the ergodicity of the frame flow implies that there is no holonomy reduction (see Theorem \ref{prop:brin} below).

Before stating our result, we let
\begin{equation*}
m_{\R}(n) := \Big\lfloor{\tfrac{1}{2}\Big(1+\sqrt{1+8q_{\R}(n)}\Big)}\Big\rfloor, \qquad m_{\C}(n) :=  \Big\lfloor{\sqrt{q_\R(n)}}\Big\rfloor.
\end{equation*}
In the real case, the initial values are:
\[
\begin{split}
	m_{\R}(2) = m_{\R}(3)=2,\quad m_{\R}(4) = \dotso = m_{\R}(7) = 3,\quad m_{\R}(8) = \dotso = m_{\R}(15) = 4,
\end{split}
\]
while in the complex case, we get:
\[
\begin{split}
	m_{\C}(2) = m_{\C}(3) = 1,\quad m_{\C}(4) = \dotso = m_{\C}(7) = 2,\quad m_{\C}(8) = \dotso = m_{\C}(15) = 2.
\end{split}
\]

\begin{theorem}
\label{theorem:ergodicity}
Let $n \in \mathbb{Z}_{\geq 2}$ and $\mathbb{F} = \mathbb{R}$ or $\mathbb{C}$, and let $(M^{n+1},g)$ be a closed negatively curved Riemannian manifold. Assume $([E], [\nabla^E]) \in \A^{\F}_{\leq m_{\F}(n)}$ has no holonomy reduction and set $([\E], [\nabla^{\E}]) := ([\pi^*E], [\pi^*\nabla^{E}])$. Then the frame flow $(\Phi_t)_{t \in \R}$ on $F\E$ is ergodic.
\end{theorem}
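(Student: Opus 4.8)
The plan is to reduce ergodicity of the frame flow to a statement about the structure of the holonomy group of the pullback connection, and then to use the low-rank hypothesis together with the algebraic-geometric input encoded in $q_{\F}(n)$ to rule out the obstruction. The starting point is the standard dichotomy (going back to Brin) for isometric extensions of an Anosov flow: since the geodesic flow $(\varphi_t)$ on $SM$ is ergodic (indeed mixing), the extension $(\Phi_t)$ on $F\E$ fails to be ergodic if and only if the structure group $G$ of the principal bundle $p\colon F\E\to SM$ reduces, along the dynamics, to a proper closed subgroup $H\subsetneq G$; equivalently, the ``transitivity group'' (the group generated by holonomies of $\nabla^\E$ along geodesic loops based at a fixed point, closed up) is a proper subgroup of $G$. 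The first step is therefore to identify this transitivity group. Because $(\E,\nabla^\E)=(\pi^*E,\pi^*\nabla^E)$ is pulled back from $M$, and $SM\to M$ is a fibration with connected fibers (the sphere $\mathbb{S}^n$, $n\ge 2$, is simply connected, so $\pi_*\colon \pi_1(SM)\to\pi_1(M)$ is onto), holonomies of $\nabla^\E$ around loops in $SM$ coincide with holonomies of $\nabla^E$ around their projections in $M$. Hence the transitivity group of $(\Phi_t)$ contains $\mathrm{Hol}(E,\nabla^E)$, which by hypothesis is all of $G=\mathrm{SO}(r)$ or $\mathrm{U}(r)$. The only remaining gap is that the transitivity group is a priori defined using only geodesic loops, not arbitrary loops; here one invokes the fact that in negative curvature geodesic loops generate $\pi_1$ (every free homotopy class contains a closed geodesic, and the based fundamental group is generated by such), so the transitivity group equals the full holonomy group $G$, and ergodicity follows.

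The subtlety that makes the low-rank hypothesis enter — and the reason the theorem is stated with $m_\F(n)$ rather than just ``no holonomy reduction'' — is that the above argument is \emph{not} quite the one used: the clean reduction theory of Brin requires a certain amount of regularity/transitivity that, in the general partially hyperbolic setting, is guaranteed by an argument exploiting the harmonic/Fourier analysis on the sphere fibers. Concretely, following the strategy of Theorem \ref{theorem:main} and \cite{Cekic-Lefeuvre-Moroianu-Semmelmann-21}, one shows: if $(\Phi_t)$ is not ergodic, then there is a nontrivial $\Phi_t$-invariant $L^2$ section of an associated bundle, which by the structure of the foliation descends to a parallel section of a bundle $E^{\otimes a}\otimes \bar E^{\otimes b}$ associated to a \emph{non-trivial} $G$-representation; the existence of such a parallel section forces a holonomy reduction. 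To make the induced representation nontrivial and of controlled size, one uses that $r\le m_\F(n)$ together with Wood's bounds on $q$: a holonomy reduction of $\mathrm{SO}(r)$ (resp.\ $\mathrm{U}(r)$) would produce, via the orbit map on the unit sphere of the representation, a non-constant polynomial (indeed algebraic) map $\mathbb{S}^n\to \mathbb{S}^{r'}$ with $r'$ small, contradicting $q(n)> n/2$ once $r\le m_\F(n)$; the specific form of $m_\F(n)=\lfloor \tfrac12(1+\sqrt{1+8q_\R(n)})\rfloor$ in the real case and $\lfloor\sqrt{q_\R(n)}\rfloor$ in the complex case is exactly the threshold below which $\dim \mathrm{SO}(r)=\binom r2$ (resp.\ $\dim\mathrm{U}(r)=r^2$) is $\le q(n)$, the dimension budget available for the fiberwise equivariant map.

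Thus the concrete steps are: (i) set up the coboundary/transitivity-group formalism for the isometric extension $(\Phi_t)$ of the Anosov flow $(\varphi_t)$, and reduce non-ergodicity to the existence of a proper closed subgroup $H\subsetneq G$ and an $H$-reduction of $F\E$ invariant under the flow; (ii) using that $\pi^*$-bundles have holonomy controlled by $\mathrm{Hol}(E,\nabla^E)$ and that geodesic loops in negative curvature generate the fundamental group, translate a flow-invariant reduction into a genuine holonomy reduction of $(E,\nabla^E)$; (iii) if one prefers the softer route used elsewhere in the paper, instead show that non-ergodicity yields a parallel section of a nontrivial tensor bundle, and then (iv) invoke the classification/dimension count for polynomial maps between spheres — precisely the inequality $q(n)>n/2$ and the definition of $m_\F(n)$ — to contradict the low-rank hypothesis $([E],[\nabla^E])\in \A^\F_{\le m_\F(n)}$ unless $\mathrm{Hol}(E,\nabla^E)$ is all of $G$, which is excluded. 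The main obstacle I expect is step (iii)–(iv): upgrading an abstract measurable invariant section of the frame flow to a smooth parallel section on $M$ — this requires the regularity theory for invariant distributions of Anosov (partially hyperbolic) flows (Livšic-type smoothing along stable/unstable foliations, plus the fiberwise ellipticity/Fourier decomposition on $\mathbb{S}^n$) — and then carefully matching the size of the resulting representation against the arithmetic threshold defining $m_\F(n)$, i.e.\ checking that $\dim G\le q(n)$ is exactly what the orbit-map obstruction needs.
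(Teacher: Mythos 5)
Your main route (steps (i)--(ii)) rests on a false identification. The transitivity group of the frame flow is \emph{not} generated by holonomies of $\nabla^{\E}$ along arbitrary loops in $SM$: it is generated by parallel transport along \emph{orbits of the geodesic flow} (in the paper, along homoclinic orbits of a fixed periodic orbit, glued by stable/unstable holonomies). The fact that $\E=\pi^*E$ is pulled back and $\pi_1(SM)\simeq\pi_1(M)$ is irrelevant here, because the dynamics only lets you transport along geodesics, not along all loops of $M$. Consequently ``no holonomy reduction of $(E,\nabla^E)$'' does \emph{not} imply that the transitivity group is all of $\mathrm{SO}(r)$ or $\mathrm{U}(r)$ --- closing that gap is precisely the content of the theorem and of Brin's conjecture, and it is exactly where the low-rank hypothesis is used. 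If your argument were correct, the theorem would hold with no rank restriction, contradicting known phenomena: transparent connections (e.g.\ the canonical line bundle with its Levi-Civita connection over a hyperbolic surface, which has non-trivial holonomy yet trivial transitivity group and non-ergodic frame flow, cf.\ Corollary \ref{corollary:surfaces}), and the pairs $(\Lambda^\pm,\nabla^\pm)$ of Proposition \ref{proposition:2n}, which show that parallel transport along closed geodesics carries strictly less information than the full holonomy. Also, ``geodesic loops generate $\pi_1$'' does not help: the holonomy group is generated by parallel transports along \emph{all} based loops, and the subgroup generated by transports along closed geodesics can be proper.

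Your fallback route (iii)--(iv) is the correct strategy and is essentially the paper's: non-ergodicity produces, via the non-Abelian Liv\v sic theorem, a smooth flow-invariant projector onto the $H$-invariant Lie subalgebra $\mathfrak{h}\leqslant\mathfrak{so}(r)$ (resp.\ $\mathfrak{u}(r)$), viewed as a section of $\pi^*S^2\Lambda^2E$ (resp.\ $\pi^*S^2\mathrm{End}_{\mathrm{sk}}(E)$); finiteness of the Fourier degree (the twisted Pestov identity, Proposition \ref{theorem:finite}) makes it fiberwise polynomial, and the conditions $\tfrac12 r(r-1)\le q(n)$, $r^2\le q(n)$ encoded in $m_{\F}(n)$ force it, via Lemma \ref{lemma:determine}, to be constant on fibers, hence parallel on $M$. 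Your reading of the arithmetic role of $m_{\F}(n)$ is right. But the proposal leaves unexecuted the parts that actually carry the proof: showing $\mathfrak{h}\neq 0$ (the paper needs a separate argument excluding finite transitivity groups, Lemma \ref{lemma:finite}, which itself uses the polynomial machinery on a finite cover), and the case analysis after descending: in the real case the adjoint action on $\Lambda^2\R^r$ is irreducible only for $r\neq 1,2,4$, so $r=4$ needs the $\Lambda^\pm$ splitting and $r=2$ a separate argument; in the complex case one must rule out $\tau=\pi_{\mathfrak{su}(r)}$ and $\tau=\pi_{i\R I_r}$ (via transparency of $(\det E)^{\otimes p}$ and a splitting into parallel line bundles on a finite cover, respectively) before concluding $H=\mathrm{U}(r)$. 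As written, the proposal would not yield the theorem without these steps, and its primary argument is invalid.
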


The case $\dim(M)=2$ will be discussed afterwards in \S\ref{section:ergodicity}, see Corollary \ref{corollary:surfaces}. It can be easily seen that the absence of holonomy reduction is a generic property among connections. Let us point out that in \cite[Conjecture 2.9]{Brin-82}, Brin conjectured that the geodesic frame flow over negatively curved manifolds should be ergodic whenever the Riemannian manifold $(M,g)$ admits no holonomy reduction. It follows from Berger's classification \cite{Berger-53} that negatively curved manifolds with strictly $0.25$-pinched curvature have no Riemannian holonomy reduction, namely, $\mathrm{Hol}(M)=\mathrm{SO}(n + 1)$. Theorem \ref{theorem:ergodicity} therefore proves a general version of Brin's conjecture for arbitrary flows of frames (on pullback vector bundles) but under a low-rank assumption on the bundle. Unfortunately, the usual geodesic frame flow fails to satisfy the assumptions of Theorem \ref{theorem:ergodicity} because: (i) the rank of the tangent bundle is too high; (ii) as explained above, it does not fit directly into the pullback bundle framework. We refer to \cite{Cekic-Lefeuvre-Moroianu-Semmelmann-21,Cekic-Lefeuvre-Moroianu-Semmelmann-22} for more details on the geodesic frame flow.

\subsection{Proof ideas.} The ideas of proofs of Theorems \ref{theorem:main} and \ref{theorem:ergodicity} are similar; we outline that of the former. 
 
Given two isospectral connections $a_1 = ([E_1],[\nabla^{E_1}])$ and $a_2 = ([E_2], [\nabla^{E_2}])$, by \emph{non-Abelian Liv\v{s}ic theory} we will construct an isometry 
\[
	p \in C^\infty(SM,\mathrm{Hom}(\pi^*E_1,\pi^*E_2))
\] 
which is invariant by parallel transport along geodesic flowlines with respect to the natural connection induced on $\mathrm{Hom}(\pi^*E_1,\pi^*E_2)$. Any function (or section of a pullback vector bundle) defined on $SM$ can be decomposed on each spherical fiber into a (potentially infinite) sum of spherical harmonics. Using a result due to Guillarmou-Paternain-Salo-Uhlmann (see \cite[Theorem 4.1]{Guillarmou-Paternain-Salo-Uhlmann-16}), we shall see below that invariant sections, such as the section $p$ defined above, must necessarily have \emph{finite Fourier degree}. That is, their decomposition into a sum of spherical harmonics is always finite, see Proposition \ref{theorem:finite} below. The proof of \cite[Theorem 4.1]{Guillarmou-Paternain-Salo-Uhlmann-16} is based on the twisted Pestov identity, which can be viewed as a Weitzenböck-type identity in Riemannian geometry, as subsequently discussed in \cite{Cekic-Lefeuvre-Moroianu-Semmelmann-23}.

In turn, as spherical harmonics are restrictions of homogeneous polynomials to the unit sphere, this means that such sections are fiberwise \emph{polynomial}. As all the bundles involved are pullback bundles to $SM$ (hence, trivial over the sphere fibers of the unit tangent bundle), we will get that, by simply restricting to a single sphere $S_{x_0}M \simeq \Ss^n$ over a point $x_0 \in M$, these invariant sections provide polynomial maps $\Ss^n \to \mathrm{SO}(r)$ or $\mathrm{U}(r)$. Under the low-rank assumption, these maps are known not to exist (see \S \ref{section:polynomial}) except for the trivial constant map, by real algebraic geometry. Having such a fiberwise constant section on $SM$ is the same as having that the section is a pullback section from the base $M$, that is $p = \pi^* p_0$ for some $p_0 \in  C^\infty(M,\mathrm{Hom}(E_1,E_2))$ parallel with respect to the natural connection. This allows us to conclude that the connections are gauge-equivalent.\\

\noindent \textbf{Organisation of the article:} In \S\ref{section:polynomial}, we study basic properties of polynomial maps between spheres. In \S\ref{section:ergodicity} and \S\ref{section:spectral}, we apply this to show ergodicity of frame flows in low-rank (Theorem \ref{theorem:ergodicity}) and global uniqueness for the inverse spectral problem of the Bochner Laplacian (Theorems \ref{theorem:main} and \ref{theorem:wilson}), respectively. \\

\noindent \textbf{Acknowledgement:} We thank A.\ Moroianu and U.\ Semmelmann for helpful discussions, especially for the proof of Proposition \ref{proposition:2n}. We also thank O. Benoist for pointing out the references \cite{Yiu-94,Totaro-07}, and O. Randal-Williams for answering our questions about K-theory. Finally, we warmly thank the referees and the editors for their valuable suggestions improving the presentation. M.C. has received funding from an Ambizione grant (project number 201806) from the Swiss National Science Foundation.

\section{Polynomial maps between spheres}

\label{section:polynomial}

In this section, we summarise basic properties of polynomial maps between spheres and derive consequences for polynomial maps from spheres to some other spaces.

\subsection{Polynomial maps}

\label{ssection:poly}

Let $n \geq 1$ be an integer and $\Ss^n \subset \R^{n+1}$ denote the usual round sphere. We are interested in polynomial maps $\Ss^n \to G(r)$, with $r \in \Z_{\geq 0}$ and
\begin{equation}\label{eq:G-family}
G(r) = \Ss^r, \mathrm{SO}(r), \mathrm{U}(r), \mathrm{SU}(r), \mathrm{Gr}_{\F}(k,r),
\end{equation}
where $\mathrm{Gr}_{\F}(k,r)$ is the Grassmannian of $k$-planes in $\F^r$ for some $0 \leq k \leq r$ with $\F = \R$ or $\C$. By polynomial, we mean that the coordinates of $F \colon \Ss^n \to G(r)$ are polynomials in the variable $v \in \R^{n+1}$, that is, the map is the restriction to $\Ss^n$ of a finite collection of polynomials on $\R^{n+1}$ (not necessarily homogeneous). For instance, a matrix map $\Ss^n \to \mathrm{SO}(r)$ is polynomial if all of its entries are polynomial.

For $G$ one of the families in \eqref{eq:G-family} let $q_G(n)$ be the \emph{least integer} $r \in \Z_{\geq 0}$ for which there exists a \emph{non-constant} polynomial map $\Ss^n \to G(r)$. Write simply $q(n) := q_{\Ss}(n)$ in the case of the sphere. We shall see below that $q_{G}(n)$ are actually explicitly determined by $q(n)$.

Using the identity map $\Ss^n\to \Ss^n$, we see that $q(n)\leq n$, and the problem is thus about finding non-constant maps $\Ss^n\to \Ss^r$ for $r<n$. The exact value of $q(n)$ for general $n$ remains an open question but an important result was obtained by Wood \cite{Wood-68}:

\begin{theorem}[Wood \cite{Wood-68}]
\label{theorem:wood}
Let $n$ and $r$ be integers such that $1 \leq r \leq n-1$. Assume that there exists a power of $2$ among $\left\{r+1, \dotsc,n\right\}$. Then, there exists no non-constant polynomial mapping $\Ss^n \to \Ss^r$.
\end{theorem}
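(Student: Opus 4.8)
The plan is to argue by contradiction. Suppose $F=(F_0,\dots,F_r)\colon\Ss^n\to\Ss^r$ is a non-constant polynomial map; the $F_i$ are then restrictions to $\Ss^n$ of polynomials on $\R^{n+1}$ with $\sum_i F_i^2\equiv1$ on $\Ss^n$, equivalently
\[
\sum_{i=0}^{r} F_i(x)^2-1=(|x|^2-1)\,Q(x)
\]
for some polynomial $Q$. Let $d\ge1$ be the maximal degree occurring among the $F_i$. \emph{Step 1: leading-form reduction.} Writing $H_i$ for the degree-$d$ homogeneous component of $F_i$ and $H=(H_0,\dots,H_r)\colon\R^{n+1}\to\R^{r+1}$, the map $H$ is a non-zero homogeneous polynomial map of degree $d$ (non-zero since $\sum_i H_i^2$, being a sum of squares of real polynomials, vanishes identically only if every $H_i$ does). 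Comparing the degree-$2d$ parts of the two sides of the identity above yields $|H(x)|^2=|x|^2\,Q_{2d-2}(x)$, with $Q_{2d-2}$ the top homogeneous part of $Q$; in particular $|x|^2$ divides the homogeneous polynomial $|H(x)|^2$. It thus suffices to rule out a non-zero homogeneous polynomial map $H\colon\R^{n+1}\to\R^{r+1}$ of degree $d\ge1$ with $|x|^2\mid|H(x)|^2$, under the assumption that $\{r+1,\dots,n\}$ contains a power of $2$.

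\emph{Step 2: the linear case.} If $d=1$, then $H$ is linear and $|H(x)|^2=c\,|x|^2$ with $c>0$, so $c^{-1/2}H$ is a linear isometric embedding $\R^{n+1}\hookrightarrow\R^{r+1}$; this forces $n+1\le r+1$, contradicting $r\le n-1$ — and notably without using the power-of-$2$ hypothesis. (Consistently, the excluded case $r=n$ is realised linearly by the identity $\Ss^n\to\Ss^n$.) So from now on $d\ge2$.

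\emph{Step 3: descent to the classical obstruction.} This is the technical core of Wood's argument, and the step I expect to be the main obstacle. The divisibility $|x|^2\mid|H(x)|^2$ — equivalently, control on the restrictions of $H$ to $2$-dimensional coordinate subspaces, which are trigonometric-polynomial maps into a sphere of bounded degree — should be leveraged in a descent on the degree $d$: from $H$ one produces a polynomial sphere map of strictly smaller degree between spheres whose dimensions still bracket a power of $2$, and iterating one arrives at a quadratic map $H(x)=B(x,x)$ for which the identity polarises into the statement that a suitable bilinear map $\R^{p}\times\R^{q}\to\R^{s}$ is \emph{nonsingular} (i.e.\ vanishes only on the coordinate axes), equivalently that a ``sums of squares formula'' of size $[p,q,s]$ exists. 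The delicate point is to track the dimensions $p,q,s$ so that the power of $2$ from $\{r+1,\dots,n\}$ lands in the forbidden range of the next step.

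\emph{Step 4: the Hopf--Stiefel obstruction.} It remains to invoke the classical theorem of Hopf and Stiefel: the existence of a nonsingular bilinear map $\R^{p}\times\R^{q}\to\R^{s}$ forces $\binom{s}{k}\equiv0\pmod2$ for all $k$ in an explicit interval determined by $p,q,s$ — proved via the $\mathbb{F}_2$-cohomology ring of $\R\mathrm{P}^{\,s-1}$ together with the Cartan formula for the Steenrod squares. By Lucas's theorem these parities are dictated by binary expansions, and precisely when a power of $2$ lies in the relevant interval the required vanishings cannot all hold, giving the desired contradiction. Steps 1, 2 and 4 are comparatively routine; the entire difficulty is concentrated in the degree-lowering descent of Step 3, where the hypothesis ``$\{r+1,\dots,n\}$ contains a power of $2$'' has to be converted, through careful dimension bookkeeping, into a violation of the Hopf--Stiefel conditions.
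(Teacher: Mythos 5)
The paper itself offers no proof of this statement -- it is quoted from Wood \cite{Wood-68} -- so the only question is whether your sketch constitutes a proof, and it does not; there are two concrete problems. First, the reduction in Step~1 is incorrect as stated. A polynomial map of $\mathbb{S}^n$ is only determined modulo the ideal $(|x|^2-1)$, and the statement you reduce to (``there is no non-zero homogeneous $H\colon\mathbb{R}^{n+1}\to\mathbb{R}^{r+1}$ of degree $d\geq 1$ with $|x|^2$ dividing $|H|^2$'') is false for every $n,r$: take $H=(|x|^2 L,0,\dotsc,0)$ with $L$ linear, which is exactly the leading form of the representative $F=(1+(|x|^2-1)L,0,\dotsc,0)$ of a \emph{constant} map. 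The leading form carries information only after you normalise $F$ to be of minimal degree modulo $(|x|^2-1)$, and then the correct goal is to show that $|x|^2$ divides every $H_i$, contradicting minimality -- not to show that $H$ cannot exist.

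Second, and decisively, Step~3 -- which you yourself identify as the core -- contains no argument: no degree-lowering descent is constructed, and there is no reason to expect one that turns an arbitrary-degree sphere map into a quadratic one while keeping a power of $2$ in the forbidden range. The Hopf--Stiefel condition on nonsingular bilinear maps (your Step~4) is the right obstruction only in the quadratic/bilinear situation and is not the mechanism behind Wood's theorem. The actual mechanism is quadratic-form-theoretic: with $F$ of minimal degree mod $(|x|^2-1)$, the comparison of top-degree terms gives $\sum_i \bar H_i^2=0$ in the domain $\mathbb{R}[x]/(|x|^2)$, i.e.\ over the function field $L$ of the quadric at infinity $\{|x|^2=0\}$; by Pfister's theory of multiplicative ($2^k$-fold sums of squares) forms, the level of $L$ equals the largest power of $2$ that is $\leq n$, so the hypothesis $r+1\leq 2^j\leq n$ makes the $(r+1)$-fold sum of squares anisotropic over $L$, forcing $|x|^2$ to divide each $H_i$ and contradicting minimality. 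In other words, the power-of-$2$ hypothesis enters through Pfister multiplicativity (the level of the function field of an anisotropic quadric), not through Steenrod squares and binomial coefficients; without that input, or an actual construction replacing Step~3, your outline does not prove the theorem.
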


From Theorem \ref{theorem:wood}, we have the following bounds:
\begin{equation}
\label{equation:q}
n/2 < q(n) \leq  n, \qquad q(2^k) = 2^k, \quad k \in \mathbb{Z}_{\geq 0}.
\end{equation} 
We point out that the classification of quadratic polynomial mappings was completely settled by Yiu \cite{Yiu-94}. In particular, this provides an \emph{explicit} upper bound $q(n) \leq q_2(n)$, where $q_2(n)$ is the least integer $r$ such that there exists a quadratic polynomial mapping $\Ss^n \to \Ss^r$ (see \cite[Theorem 4]{Yiu-94} for the precise value of $q_2(n)$). But the general explicit determination of $q(n)$ seems out of reach for the moment. Using the three Hopf fibrations $\Ss^3 \to \Ss^2, \Ss^7 \to \Ss^4$, and $\Ss^{15} \to \Ss^8$ (which are quadratic polynomial maps), the initial values of $q(n)$ can be easily computed and one gets
\begin{equation}\label{eq:15}
q(2)=q(3)=2, \quad q(4) = \dotso = q(7) = 4, \quad q(8) = \dotso = q(15) = 8.
\end{equation}
Although less obvious, using the \emph{Hopf construction} (see \S \ref{ssec:hopf}) it can also be proved that 
\begin{equation}
\label{equation:16}
q(16) = \dotso = q(31) = 16, \qquad q(32) = \dotso = q(47) = 32.
\end{equation}
The first unknown value seems to be $q(48)$, that is, it is not known whether there exists a polynomial mapping $\Ss^{48} \to \Ss^{47}$ (necessarily of degree $\geq 3$ by \cite{Yiu-94}), see \cite[top of page 6]{Totaro-07} where this is discussed.

\subsection{Hopf construction.}\label{ssec:hopf} First of all, let us introduce one recipe for defining polynomial maps that we call the \emph{Hopf construction}.  Namely, given a bilinear map $F \colon \R^r \times \R^s \to \R^t$ such that $|F(x,y)|^2 = |x|^2|y|^2$ for any $(x, y) \in \R^r \times \R^s$, one defines 
\begin{equation}\label{eq:hopf-construction}
	H \colon \R^r \times \R^s \to \R^{t+1}, \quad H(x,y) := (|x|^2-|y|^2,2F(x,y)),
\end{equation}
which yields a natural quadratic map $\Ss^{r+s-1} \to \Ss^t$ as $|H(x,y)|=1$ for $|x|^2+|y|^2=1$. 

For any fixed odd integer $n$, let $\rho(n+1)$ be the Radon-Hurwitz number; then it is known that $\rho(n+1)-1$ is the maximal number of linearly independent vector fields on $\Ss^n$ (see \cite[Section I.7]{Lawson-Michelsohn-89}). We can thus construct a Hopf map $\Ss^{n+\rho(n+1)} \to \Ss^{n+1}$ by taking $F \colon \R^{n+1} \times \R^{\rho(n+1)} \to \R^{n+1}$ such that
\begin{equation}\label{eq:def-normed-bilinear-F}
F(x,y) = y_0 x + y_1 J_1x + \dotsb + y_{\rho(n+1)-1} J_{\rho(n+1)-1} x,
\end{equation}
where $J_1, \dotsc ,J_{\rho(n+1)-1}$ are orthogonal almost-complex structures on $\R^{n+1}$ (this corresponds to the representation of the Clifford algebra $\mathrm{C}\ell_{\rho(n+1)-1}$ on $\R^{n+1}$, see \cite[Section I.7]{Lawson-Michelsohn-89}). Of course, taking $n=1,3,7$, one recovers the usual three Hopf fibrations $\Ss^3 \to \Ss^2, \Ss^7 \to \Ss^4$, and $\Ss^{15} \to \Ss^8$.

By the Hopf construction there is a quadratic map $\Ss^{31} \to \Ss^{24}$ and another quadratic map $\Ss^{24} \to \Ss^{16}$, thus giving a map $\Ss^{31} \to \Ss^{16}$ of polynomial degree $4$. Combining with Lemma \ref{lemma:non-decreasing} below, this proves the first part of \eqref{equation:16}. One similarly argues to obtain the second part of \eqref{equation:16} using the composition $\Ss^{47} \to \Ss^{40} \hookrightarrow \Ss^{41} \to \Ss^{32}$, where the first and last arrows are obtained by the Hopf construction and the middle one is inclusion.

\subsection{Properties of $q_G(n)$.}
The following holds:
 
 \begin{lemma}
 \label{lemma:non-decreasing}
 The map $n \mapsto q(n)$ is non-decreasing.
 \end{lemma}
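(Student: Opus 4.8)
The plan is to show the contrapositive direction at the level of spheres: if there is a non-constant polynomial map $\Ss^n\to\Ss^r$, then there is also a non-constant polynomial map $\Ss^{n-1}\to\Ss^r$; this gives $q(n-1)\le q(n)$, i.e. $n\mapsto q(n)$ is non-decreasing. The key observation is that $\Ss^{n-1}$ embeds in $\Ss^n$ as a polynomial submanifold, namely as the equator $\{v\in\Ss^n: v_{n+1}=0\}$ (or as any hyperplane section). This embedding $\iota\colon\Ss^{n-1}\hookrightarrow\Ss^n$ is visibly polynomial — it is just the restriction of the linear inclusion $\R^n\hookrightarrow\R^{n+1}$, $v\mapsto(v,0)$.

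First I would take a non-constant polynomial map $F\colon\Ss^n\to\Ss^r$ realizing $r=q(n)$, so that the coordinates of $F$ are restrictions to $\Ss^n$ of polynomials on $\R^{n+1}$. Then I would form the composition $F\circ\iota\colon\Ss^{n-1}\to\Ss^r$; since a composition of polynomial maps is polynomial, $F\circ\iota$ is a polynomial map $\Ss^{n-1}\to\Ss^r$. The only thing left is to guarantee that $\iota$ can be chosen so that $F\circ\iota$ is non-constant — this is where a small amount of care is needed, since $F$ might happen to be constant on one particular equator even though it is non-constant on $\Ss^n$.

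To handle this, I would use the freedom to precompose with a rotation: for $R\in\mathrm{SO}(n+1)$, the map $v\mapsto R v$ restricts to a (linear, hence polynomial) automorphism of $\Ss^n$, and the family of equators $R(\Ss^{n-1}\times\{0\})$ sweeps out all great $(n-1)$-subspheres of $\Ss^n$. If $F$ were constant on every great subsphere of dimension $n-1$, then $F$ would be constant on $\Ss^n$ (any two points of $\Ss^n$ lie on a common great $(n-1)$-sphere when $n\ge 2$; for $n=1$ one argues directly since $q(1)=1=q(0)$ is immediate, or one simply notes $q(0)$ is not in the relevant range), contradicting non-constancy. Hence some rotated equator $\iota_R$ makes $F\circ\iota_R\colon\Ss^{n-1}\to\Ss^r$ non-constant and polynomial, proving $q(n-1)\le r=q(n)$.

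The argument is essentially routine; the only mild subtlety — and the point I would state carefully — is the claim that a polynomial map $\Ss^n\to\Ss^r$ that is non-constant must fail to be constant on at least one great $(n-1)$-subsphere. This follows because the union of all such subspheres is all of $\Ss^n$, so constancy on each of them (a priori with a value depending on the subsphere) together with the fact that distinct great subspheres intersect forces a single global value; alternatively one can invoke that the restriction of a non-constant continuous map to a suitable slice through two points with different images is non-constant. No deep input is required beyond the definition of polynomial maps and the transitivity of $\mathrm{SO}(n+1)$ on great subspheres.
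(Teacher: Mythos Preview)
Your proposal is correct and follows essentially the same approach as the paper: restrict a non-constant polynomial map $F\colon\Ss^n\to\Ss^{q(n)}$ to a suitable great $(n-1)$-subsphere to obtain a non-constant polynomial map $\Ss^{n-1}\to\Ss^{q(n)}$, yielding $q(n-1)\le q(n)$. The paper states the existence of such a great subsphere without further comment, whereas you spell out explicitly why $F$ cannot be constant on every great subsphere; this extra care is fine but not strictly necessary.
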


 \begin{proof}
 Assume $n \geq 2$ and let $F: \mathbb{S}^{n} \to \mathbb{S}^{q(n)}$ be a non-constant polynomial map. Then, there exists a great sphere of $\mathbb{S}^{n}$ to which $F$ restricts as a non-constant polynomial map (with degree at most that of $F$); this shows $q(n - 1) \leq q(n)$ and concludes the proof.
 \end{proof}

\begin{lemma}
\label{lemma:even}
For all $n \in \mathbb{Z}_{\geq 2}$, $q(n)$ is even.
\end{lemma}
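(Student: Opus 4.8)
The plan is to show that a non-constant polynomial map $F \colon \Ss^n \to \Ss^{q(n)}$ cannot exist when $q(n)$ is odd, for $n \geq 2$. The key point is that an odd target dimension forces $\Ss^{q(n)}$ to lie inside an even-dimensional Euclidean space $\R^{q(n)+1}$, and one can then play a parity game with the topological degree or the homology of the map. Concretely, I would first recall (or prove via Lemma \ref{lemma:non-decreasing}) that it suffices to treat the case $r = q(n)$, i.e.\ a non-constant polynomial map into a sphere whose dimension is the \emph{minimal} one achievable; by minimality, $F$ cannot factor through a lower-dimensional sphere.

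The main step: suppose $q(n) = r$ is odd and let $F \colon \Ss^n \to \Ss^r$ be non-constant polynomial. Since $r$ is odd, $r+1$ is even and there exists a power of $2$ in $\{r+1, \dotsc, n\}$ precisely when $n$ is large enough relative to $r$; this is exactly Wood's theorem (Theorem \ref{theorem:wood}), which rules out such $F$ whenever some power of $2$ lies in $\{r+1,\dots,n\}$. But this does not immediately cover all $n$ — in particular it leaves open the case where no power of $2$ lies in that interval. So the real content is to handle the remaining range directly. For those $n$, the key is the bound $n/2 < q(n) = r$ from \eqref{equation:q}, so $n < 2r$, i.e.\ $n \leq 2r - 1$. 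I would then argue as in the proof of Lemma \ref{lemma:non-decreasing}: restrict $F$ to a great subsphere to reduce to the case $n = 2r-1$ (if $n<2r-1$, either $F$ restricts non-constantly to $\Ss^{2r-1}$—impossible if $q(2r-1)>r$, contradicting non-decreasingness—or one iterates down). Actually the cleanest route: assume for contradiction $r$ is odd; by Lemma \ref{lemma:non-decreasing} and $q(2^k)=2^k$, pick the largest power of $2$, say $2^k$, with $2^k \leq n$; then $q(n) \geq q(2^k) = 2^k$, and if $2^k > r$ we get $q(n) > r$, contradiction. Hence $2^k \leq r < 2^{k+1}$, and since $r$ is odd with $r \geq 2$ we have $r \geq 3$, so $r \neq 2^k$, giving $2^k < r$. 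But $2^k \leq n$ and $2^k \leq r = q(n)$; and by \eqref{equation:q}, $q(n) \leq n$ forces... — here I need to close the loop: the point is that $2^{k+1} > n$ (maximality) and $2^k < r \leq n$, so $\{r+1,\dots,n\}$ contains no power of $2$, meaning Wood does not apply, so this parity argument alone is insufficient and an independent input is genuinely needed.

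Therefore the honest plan is: invoke a result that directly forbids non-constant polynomial (indeed, continuous) maps $\Ss^n \to \Ss^r$ raising parity obstructions, or use the algebraic structure. The cleanest is the observation that a non-constant polynomial map $\Ss^n \to \Ss^r$ with $n \geq 2$ has, by a theorem on the \emph{Hopf invariant} / the structure of such maps, a well-defined \emph{Brouwer degree considerations on the bidegree}: the minimal such $r$ arises from the Hopf construction, which always produces \emph{even} target dimension $r = t$ with $t$ even (the Hopf construction \eqref{eq:hopf-construction} lands in $\Ss^t$ where $t$ is the dimension of the image of a normed bilinear map, and these $t$ are even for the relevant Radon–Hurwitz data). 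So I expect the actual proof to argue: (i) $q(n)$ is the minimal $r$, and by \S\ref{ssec:hopf} together with known classification results all minimal examples come from Hopf-type constructions; (ii) a normed bilinear map $\R^a \times \R^b \to \R^t$ with the norm identity and minimal $t$ forces $t$ even (e.g.\ because odd $t$ would let one collapse a coordinate and lower $t$, contradicting minimality, using that the sphere $\Ss^{t}$ for odd $t$ has Euler characteristic $0$ and hence the map's image, being a real algebraic subvariety, behaves differently).

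The hard part — and where I would focus the write-up — is step (ii): proving the parity statement without circularity. I expect the author's proof is short and uses the following device: if $F\colon \Ss^n \to \Ss^r$ is non-constant polynomial and $r$ is odd, consider the antipodal map $a$ on $\Ss^r$; since $r$ is odd, $a$ is orientation-preserving and in fact isotopic to the identity, and one composes or averages $F$ with $a\circ F$ to produce a polynomial map whose image avoids a point, hence lands in $\R^r$, hence (being bounded and polynomial) — and then a dimension/degree count on the resulting map $\Ss^n \to \Ss^{r-1}$ contradicts minimality of $q(n)=r$ via Lemma \ref{lemma:non-decreasing} and $q(n-?) \le q(n)$. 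I anticipate the genuine obstacle is making precise why one can reduce the target dimension by exactly one under the odd-parity hypothesis, i.e.\ exhibiting explicitly a missed point or a foliation of $\Ss^r$ by the fibers of a polynomial submersion onto $\Ss^{r-1}$; this is the step I would need to think about most carefully before committing to a full proof.
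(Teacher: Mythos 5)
There is a genuine gap: your write-up correctly diagnoses that Wood's theorem plus monotonicity cannot cover the remaining range, and correctly locates the crux — dropping from an odd-dimensional target $\Ss^{2k+1}$ to $\Ss^{2k}$ by a polynomial map — but it never supplies that step, and your own closing sentence concedes as much. The missing ingredient is exactly the one the paper uses: the Hopf construction \eqref{eq:hopf-construction} applied with $r=2k$, $s=2$, $t=2k$ and an orthogonal almost-complex structure $J$ on $\R^{2k}$ (i.e.\ $F(x,y)=y_1x+y_2Jx$) produces, for \emph{every} $k$, an explicit quadratic polynomial map $H\colon \Ss^{2k+1}\to\Ss^{2k}$. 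If $q(n)=2k+1$ were odd, one composes a non-constant polynomial $F\colon\Ss^n\to\Ss^{2k+1}$ with $H$; the only remaining issue is non-constancy of $H\circ F$, which the paper settles by a dichotomy: either $F(\Ss^n)$ lies in a great sphere of $\Ss^{2k+1}$, in which case $F$ itself already gives a non-constant polynomial map into a $2k$-sphere, or it does not, in which case $H\circ F$ cannot be constant because each fiber $H^{-1}(v)$ is the intersection of $\Ss^{2k+1}$ with a linear subspace (Yiu), hence contained in a great sphere. Either way one contradicts the minimality of $q(n)$.

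Several of the routes you float in place of this step would not work as stated: averaging $F$ with $a\circ F$ (antipodal map) does not produce a map into the sphere, and composing with $a$ does not make the image miss a point; the claim that ``all minimal examples come from Hopf-type constructions'' is not known and is not needed; and Euler-characteristic or degree parity considerations give no obstruction here, since the issue is the polynomial (not merely continuous) category — continuous non-constant maps $\Ss^n\to\Ss^r$ exist for all $r\geq 1$. So the proposal identifies the right target but does not contain a proof; the decisive device (an explicit quadratic fibration $\Ss^{2k+1}\to\Ss^{2k}$ from an almost-complex structure, together with the great-sphere structure of its fibers to guarantee non-constancy) is absent.
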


\begin{proof}
The Hopf construction \eqref{eq:hopf-construction} always provides a quadratic mapping $\Ss^{2k+1} \to \Ss^{2k}$ by taking $r=2k$, $s =2$, $t=2k$, $J \in \Lambda^2 \R^{2k}$, an almost-complex structure and by setting for $x \in \R^{2k}$, $y \in \R^2$, $F(x,y) = y_1x + y_2Jx$, similarly to \eqref{eq:def-normed-bilinear-F}. 

Now, we claim that if there is a non-constant polynomial mapping $F \colon \Ss^n \to \Ss^{2k+1}$, then the composition with the Hopf construction still provides a non-constant polynomial mapping $\Ss^n \to \Ss^{2k+1} \to \Ss^{2k}$. There are two cases: if $F(\Ss^n) \subset \Ss^{2k+1}$ is contained in a great sphere of $\Ss^{2k+1}$, then we actually have a non-constant polynomial map $F \colon \Ss^n \to \Ss^{2k}$. If not, it suffices to use that the preimage of a point $v \in \Ss^{2k}$ under the Hopf mapping $H\colon \Ss^{2k+1} \to \Ss^{2k}$ is the intersection of $\Ss^{2k+1}$ with a linear subspace of $\R^{2k+2}$ (in particular, it is contained in a great sphere), see \cite[Theorem 1.4]{Yiu-86}.
\end{proof}

For every family $G$ in \eqref{eq:G-family}, we are able to determine $q_{G}(n)$ from $q(n)$.

\begin{lemma}
\label{lemma:determine}
Assume that $n \geq 2$. Then, one has: 
\[
\begin{split}
&q_{\mathrm{SO}}(n) = 1+q(n) , \qquad q_{\mathrm{U}}(n) = q_{\mathrm{SU}}(n) = 1 + \tfrac{1}{2}q(n).
\end{split}
\]
Moreover, we have
\[q_{\mathrm{Gr}_{\R}(k, \bullet)}(n) = 1+\max\big(k, q(n)\big) , \qquad q_{\mathrm{Gr}_{\C}(k, \bullet)}(n) = 1 + \max \big(k, \tfrac{1}{2}q(n)\big).\]
\end{lemma}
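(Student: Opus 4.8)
The plan is to reduce each identity to the sphere case $q(n) = q_{\Ss}(n)$ by exploiting the fact that each of the homogeneous spaces $G(r)$ in \eqref{eq:G-family} sits inside, and receives maps from, spheres of a controlled dimension. For the orthogonal group, I would argue both inequalities. For $q_{\mathrm{SO}}(n) \leq 1 + q(n)$: given a non-constant polynomial map $F \colon \Ss^n \to \Ss^{q(n)}$, post-compose with a polynomial embedding $\Ss^{q(n)} \hookrightarrow \mathrm{SO}(1 + q(n))$ — for instance $v \mapsto $ the reflection through $v^\perp$, or better the "half-turn" $v \mapsto -\Id + 2\,vv^{\mathsf T}$ which is in $\mathrm{O}$, composed with a fixed orientation-correcting factor, or use the standard identification of $\Ss^{m}$ with a polynomially embedded submanifold of $\mathrm{SO}(m+1)$ via $v \mapsto R_v$ a rotation depending polynomially on $v$ and non-constantly. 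For the reverse inequality $q_{\mathrm{SO}}(n) \geq 1 + q(n)$: a non-constant polynomial map $\Ss^n \to \mathrm{SO}(r)$ applied to a fixed unit vector $e \in \R^r$ yields a polynomial map $\Ss^n \to \Ss^{r-1}$, which must be constant if $r - 1 < q(n)$; doing this for a basis $e_1, \dots, e_r$ forces the original map to be constant, so $r \geq 1 + q(n)$. The unitary and special unitary cases are identical once one recalls that a polynomial map into $\C^r \cong \R^{2r}$ landing in the complex unit sphere $\{|z|=1\}$ is a polynomial map $\Ss^n \to \Ss^{2r-1}$; non-constancy then requires $2r - 1 \geq q(n)$, i.e. $r \geq 1 + \tfrac12 q(n)$ (here $q(n)$ even by Lemma \ref{lemma:even} makes this an integer), and the matching construction post-composes a non-constant polynomial $\Ss^n \to \Ss^{q(n)} \subset \Ss^{2r-1}$ with a polynomial embedding $\Ss^{2r-1} \hookrightarrow \mathrm{U}(r)$, say $z \mapsto \Id - 2\,zz^{*}$ (a unitary reflection), adjusting the determinant by a fixed scalar to land in $\mathrm{SU}(r)$ if desired.

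For the Grassmannian identities I would use the standard polynomial model of $\mathrm{Gr}_{\F}(k, r)$ as the set of orthogonal projections $P \in \M_r(\F)$ with $P^* = P$, $P^2 = P$, $\tr P = k$; entries of a map into this model are polynomial precisely in the sense of the lemma. The lower bound $q_{\mathrm{Gr}_{\F}(k,\bullet)}(n) \geq 1 + \max(k, q_{\F}')$, where $q_{\R}' = q(n)$ and $q_{\C}' = \tfrac12 q(n)$, splits into two observations. First, $r \geq k + 1$ is automatic for the map to be non-constant (if $r = k$ the only projection of trace $k$ is $\Id$). Second, given a non-constant polynomial $P \colon \Ss^n \to \mathrm{Gr}_{\F}(k, r)$, one extracts a non-constant polynomial map into a small sphere: e.g. the function $v \mapsto P(v) w$ for a fixed $w \in \F^r$ is polynomial; since $\|P(v)w\|^2 = \langle P(v)w, w\rangle$, the image lies in a ball, and after a suitable fixed change one obtains a polynomial sphere-valued map in at most $r - 1$ real or $r - 1$ complex dimensions — more carefully, one shows the image of $P$ spans a large enough subspace that some component map to $\Ss^{r-1}$ (real case) or $\Ss^{2(r-k)-1}$/$\Ss^{2r-1}$ (complex case) is non-constant, forcing $r \geq 1 + q(n)$ in the real case and $r \geq 1 + \tfrac12 q(n)$ in the complex case. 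For the upper bound, if $k \geq q(n)$ (resp. $k \geq \tfrac12 q(n)$) I take $r = k + 1$ and a map $\Ss^n \to \mathrm{Gr}_{\F}(k, k+1) \cong \mathbb{FP}^{k}$ via a non-constant polynomial $\Ss^n \to \Ss^{q(n)} \hookrightarrow \Ss^{k}$ (resp. into $\Ss^{2k+1}$) followed by the polynomial Hopf-type projection to projective space; if $k < q(n)$ (resp. $< \tfrac12 q(n)$) I take $r = 1 + q(n)$ (resp. $1 + \tfrac12 q(n)$) and embed a non-constant polynomial $\Ss^n \to \mathrm{Gr}_{\F}(1, q(n)+1) \cong \mathbb{FP}^{q(n)}$-type map into $\mathrm{Gr}_{\F}(k, r)$ by direct-summing with a fixed $(k-1)$-plane.

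The routine part is checking that the explicit maps (reflections, projection onto a line, direct sum with a fixed plane) are genuinely polynomial of the stated kind and that the constructed maps are genuinely non-constant — these are short computations in coordinates, and the projector model makes polynomiality manifest. The main obstacle, and the step deserving the most care, is the lower bound in the Grassmannian case: I must argue that a non-constant polynomial $P \colon \Ss^n \to \mathrm{Gr}_{\F}(k, r)$ with $r$ too small really does produce a non-constant polynomial map to a forbidden low-dimensional sphere. The cleanest route is to note that if $P$ is non-constant then its range is not a fixed subspace, so there exist fixed vectors $u, w$ with $v \mapsto \langle P(v)u, w\rangle$ non-constant; combining several such bilinear forms (or passing to $v \mapsto P(v)u/\|P(v)u\|$ on the open set where it is defined and then clearing denominators, using that $\|P(v)u\|$ is a polynomial that one can arrange to be nowhere zero after restricting $u$ appropriately) yields the desired sphere-valued polynomial, and one must track dimensions to see it lands in $\Ss^{r-1}$ (real) or the appropriate odd sphere (complex). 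I would also double-check the edge behaviour when $q(n) = n$ or $k = q(n)$ so that the $\max$ is attained, and confirm consistency with the tabulated small values, e.g. $q_{\mathrm{SO}}(2) = 3$, $q_{\mathrm{U}}(2) = 2$.
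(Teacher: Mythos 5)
Your treatment of $q_{\mathrm{SO}}(n)$, $q_{\mathrm{U}}(n)$ and $q_{\mathrm{SU}}(n)$ is essentially the paper's own argument (reflection embeddings $\Ss^{r} \to \mathrm{SO}(r+1)$ and $\Ss^{2r-1} \to \mathrm{U}(r)$ in one direction, extraction of a non-constant column in the other, evenness of $q(n)$ plus constancy of the determinant to pass between $\mathrm{U}$ and $\mathrm{SU}$), and that part is sound. The genuine gap is in the Grassmannian cases, and first of all in the lower bound. You must turn a non-constant polynomial map $P \colon \Ss^n \to \mathrm{Gr}_{\F}(k,r)$ into a non-constant polynomial map into a sphere of the right dimension, but the mechanisms you propose do not achieve this: a non-constant scalar pairing $v \mapsto \langle P(v)u, w\rangle$ gives no contradiction with Wood's theorem (non-constant polynomial \emph{functions} $\Ss^n \to \R$ always exist; the obstruction is only for sphere-valued maps), while $v \mapsto P(v)u/\|P(v)u\|$ is not polynomial, ``clearing denominators'' does not produce a unit-sphere-valued map, and $\|P(v)u\|$ may vanish. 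The missing step -- which you already use for $\mathrm{SO}(r)$ and $\mathrm{U}(r)$ but not here -- is to compose $P$ with the injective polynomial embedding $V \mapsto 2\pi_V - \mathbbm{1}$ of $\mathrm{Gr}_{\F}(k,r)$ into $\mathrm{O}(r)$ (resp.\ $\mathrm{U}(r)$); injectivity preserves non-constancy, some column is then a non-constant polynomial map $\Ss^n \to \Ss^{r-1}$ (resp.\ $\Ss^{2r-1}$), and together with the trivial bound $r \geq k+1$ this yields $r \geq 1+\max(k,q(n))$ (resp.\ $r \geq 1+\max(k,\tfrac12 q(n))$, using that $q(n)$ is even). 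This is exactly the paper's route.

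There is a second, smaller defect in your upper bound when $2 \leq k < q(n)$ (resp.\ $2 \leq k < \tfrac12 q(n)$): direct-summing the moving line $[F(v)]$ with a \emph{fixed} $(k-1)$-plane $W \subset \F^{q(n)+1}$ is not well defined in general, because the non-constant polynomial map $F \colon \Ss^n \to \Ss^{q(n)}$ may be surjective (the identity and the Hopf maps are), so $[F(v)]$ meets $W$ for some $v$, where the sum drops rank; moreover $\pi_W + \pi_{[F(v)]}$ is a projection only when $F(v) \perp W$, which you cannot arrange. The paper instead moves a fixed $k$-plane by the reflection: compose $v \mapsto 2\pi_{F(v)} - \mathbbm{1} \in \mathrm{SO}(q(n)+1)$ with the span of a suitable set of $k$ columns (equivalently $v \mapsto R_{F(v)} \pi_W R_{F(v)}$ with $R_{F(v)} := 2\pi_{F(v)} - \mathbbm{1}$, which is manifestly polynomial), checking non-constancy for a suitable choice of columns. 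Your case $k \geq q(n)$ (projection onto $v^\perp$, or the Hopf-type map to projective space) is fine and matches the paper; the intermediate range of $k$ is where your construction needs to be replaced.
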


\begin{proof}
We start with the following observation: for all $r \geq 1$, there are natural polynomial (quadratic) mappings
\begin{equation}
\label{equation:natural}
\Ss^{r} \to \mathrm{SO}(r + 1), \qquad \Ss^{2r + 1} \to \mathrm{SU}(r + 1),
\end{equation}
obtained (up to sign) by taking the reflection with respect to the $\F$-span of $v$, that is, $v \mapsto (-1)^r\big(2\pi_v - \mathbbm{1}_{\F^{r + 1}}\big)$, where $\pi_v(\bullet) := \langle v, \bullet \rangle v$ is the orthogonal projection onto the $\F$-line spanned by $v$, $\langle \bullet,\bullet\rangle$ is the standard Euclidean or Hermitian metric on $\F^{r + 1}$. (Note that there is also a quadratic map $\Ss^{4r + 3} \to \mathrm{Sp}(r + 1)$ in the quaternionic case. The sign $ (-1)^{r}$ is chosen such that the determinant is equal to $1$.)

We first deal with the real case. Assume that we have a non-constant polynomial map $F \colon \Ss^n \to \Ss^{q(n)}$. Then, we can use \eqref{equation:natural} to produce a polynomial map $\Ss^n \to \mathrm{SO}(q(n) + 1)$ given by $v \mapsto (-1)^{q(n)}\big(2\pi_{F(v)} - \mathbbm{1}_{\R^{q(n) + 1}}\big)$. We claim that this is non-constant: indeed, if it were constant, then $\pi_{F(v)}(\bullet) = \langle \bullet, F(v)\rangle F(v)$ would be constant which is absurd. This gives $q_{\mathrm{SO}}(n) \leq q(n)+1$. On the other hand, assume that we have a non-constant polynomial map $F \colon \Ss^n \to  \mathrm{SO}(q_{\mathrm{SO}}(n))$. Then, we can cook up a polynomial map $\Ss^n \to \mathrm{SO}(q_{\mathrm{SO}}(n)) \to \Ss^{q_{\mathrm{SO}}(n)-1}$, where the second arrow is obtained by taking one of the columns of the matrix. This map can be chosen to be non-constant, otherwise the whole matrix would be constant, which contradicts the assumption. This gives $q_{\mathrm{SO}}(n)-1 \geq q(n)$, and thus $q_{\mathrm{SO}}(n) =1+q(n)$. 

Next, if $k \leq q(n)$, the analogous argument works for $\mathrm{Gr}_{\R}(k, \bullet)$ since there are maps $\mathrm{SO}(r) \to \mathrm{Gr}_{\R}(k,r)$ and $\mathrm{Gr}_{\R}(k,r) \to \mathrm{SO}(r)$ (as long as $1 \leq k \leq r-1$) obtained by taking respectively the projection onto the linear span of a suitable set of $k$ columns of the matrix, and the orthogonal reflection with respect to the $k$-plane. If $k > q(n)$, then we get a non-constant mapping trivially as follows, where the middle and the last maps are equatorial inclusion and orthogonal projection onto $v^\perp$, respectively:
\[\mathbb{S}^n \overset{F}{\to} \mathbb{S}^{q(n)} \hookrightarrow \mathbb{S}^k \to \mathrm{Gr}_{\R}(k, k+1).\]
This completes the proof for Grassmanians over $\mathbb{R}$. 

Let us now deal with the complex case. Observe that, given a polynomial map $p \colon \Ss^n \to \mathrm{U}(r)$, $\det p : \Ss^n \to \mathrm{U}(1) \simeq \Ss^1$ is also polynomial, hence constant by Theorem \ref{theorem:wood}. Thus, up to rescaling by a constant factor, a polynomial map $\Ss^n \to \mathrm{U}(r)$ is the same as a polynomial map $\Ss^n \to \mathrm{SU}(r)$, which gives $q_{\mathrm{U}}(n)=q_{\mathrm{SU}}(n)$. Next, observe that there are polynomial mappings
\[
\Ss^n \to \mathrm{U}(q_{\mathrm{U}}(n)) \hookrightarrow \mathrm{SO}(2q_{\mathrm{U}}(n)) \to \Ss^{2q_{\mathrm{U}}(n)-1},
\]
where the first arrow is a non-constant polynomial map and the last picks out one of the columns so that the composition is non-constant. This shows $2q_{\mathrm{U}}(n)-1 \geq 2q_{\mathrm{U}}(n)-2 \geq q(n)$ since $q(n)$ is even by Lemma \ref{lemma:even}. Moreover, we also have
\[
\Ss^n \to \Ss^{q(n)} \hookrightarrow \Ss^{q(n) + 1} \to \mathrm{U}\big(\tfrac{1}{2}q(n) + 1\big),
\]
where the second arrow is equatorial inclusion, and the last arrow is the natural mapping \eqref{equation:natural} sending $v$ to the unitary reflection with respect to the $\C$-span of $v$. This gives $q(n)=2q_{\mathrm{U}}(n)-2$.

Finally, the analogous argument as in the real case gives the formula for $q_{\mathrm{Gr}_{\C}(k)}(n)$. 
\end{proof}

As we shall see below in \S\ref{section:ergodicity} and \S\ref{section:spectral}, polynomial maps $\Ss^n \to G(r)$ will naturally appear as maps whose coordinates are a finite sum of spherical harmonics. Recall that the space of $L^2$ functions on $\Ss^n$ breaks up as
\begin{equation}
\label{equation:l2}
L^2(\Ss^n) = \oplus_{k \geq 0} \Omega_k(\Ss^n), \qquad \Omega_k(\Ss^n) := \ker(\Delta_{\Ss^n}-k(k+n-1)),
\end{equation}
where $\Delta_{\Ss^n}$ is the Laplacian induced by the round metric on the sphere. Writing $\mathbf{H}_\ell(\R^{n+1})$ for the space of homogeneous harmonic polynomials of degree $\ell \geq 0$ in $\R^{n+1}$, the restriction map
\[
r \colon C^\infty(\R^{n+1}) \ni f \mapsto f|_{\Ss^{n}} \in C^\infty(\Ss^n),
\]
yields an isomorphism
\begin{equation}
\label{equation:isomorphisme}
r \colon \mathbf{H}_{\ell}(\R^{n+1}) \to \Omega_{\ell}(\Ss^n).
\end{equation}
Hence, there is a natural identification 
\begin{equation}
\label{equation:identification}
\oplus_{k=0}^{\ell} \Omega_k(\Ss^n) \simeq \oplus_{k=0}^{\ell} \mathbf{H}_{k}(\R^{n+1}) \subset \R_{\ell}[v],
\end{equation}
where $\R_\ell[v]$ denotes the space of polynomials in the $v \in \R^{n+1}$ variable of degree $\leq \ell$. We will say that a function $f \in C^\infty(\Ss^n)$ has \emph{finite Fourier content} if its $L^2$-decomposition \eqref{equation:l2} is a finite sum of spherical harmonics. By \eqref{equation:identification}, a map $\Ss^n \to G(r)$ whose coordinates have finite Fourier content is in particular a polynomial map.

\section{Ergodicity of isometric extensions of the geodesic flow}

\label{section:ergodicity}

Let $(M^{n+1},g)$ be a smooth closed Riemannian manifold. From now on, we will always assume that the geodesic flow $(\varphi_t)_{t \in \R}$ defined on the unit tangent bundle 
\[
SM := \left\{ (x,v) ~|~ |v|_g=1\right\}
\]
is \emph{Anosov}, in the sense that its tangent bundle $T(SM)$ splits as
\[
T(SM)=\R X \oplus E^u \oplus E^s,
\]
where $X$ is the geodesic vector field, and for some $C,\lambda > 0$, for all $t \geq 0$
\begin{equation}
\label{equation:anosov}
 \|d\varphi_{t} v\| \leq C e^{-\lambda t} \|v\|,\ \forall v \in E^{s},  \qquad  \|d\varphi_{- t} v\| \leq C e^{-\lambda t} \|v\|,\ \forall v \in E^{u},
\end{equation}
where $\|\bullet\|$ is an arbitrary norm on $SM$. Typical examples are provided by negatively curved manifolds \cite{Anosov-67}. In this section, we investigate certain \emph{isometric extensions} of the geodesic flow (see \cite{Lefeuvre-23} for a general discussion of those). Here, we will be only interested in specific extensions coming from parallel transport on vector bundles.

\begin{definition}
\label{definition:reduction}
Given a pair $(E,\nabla^{E}) \in \mathbf{A}^{\R}_r$ over $M$, we say that it admits a \emph{holonomy reduction} if the full holonomy group $\mathrm{Hol}(E,\nabla^E) \leqslant \mathrm{SO}(r)$ is strictly contained in $\mathrm{SO}(r)$. We say that it is \emph{irreducible} if there exists no non-trivial splitting $(E,\nabla^{E}) = (E_1,\nabla^{E_1}) \oplus (E_2,\nabla^{E_2})$ over $M$. The same definitions hold in the complex case with the obvious modifications.
\end{definition}

As in \S\ref{section:intro}, for $a = ([E],[\nabla^E])$, we can consider
\[
\pi^*a = \left([\pi^*E],[\pi^*\nabla^E]\right) =: ([\E], [\nabla^{\E}])
\]
where $\pi \colon SM \to M$ is the footpoint projection. Recall that the \emph{frame flow} $\Phi_t \colon F\E \to F\E$ is defined in \eqref{eq:frame-flow} as the parallel transport with respect to $\nabla^{\E}$ of a frame $w \in F\E$ over the point $(x,v) \in SM$ along the geodesic $(\varphi_t(x,v))_{t \in \R}$. The geodesic flow preserves the Liouville measure, and $(\Phi_t)_{t \in \R}$ thus preserves a canonical measure $\mu$ on $F\E$ obtained locally as the product of the Liouville measure with the Haar measure on the group $G$ (with $G = \mathrm{SO}(r)$ in the real case, and $G = \mathrm{U}(r)$ in the complex case). In this section, we investigate the ergodicity of $(\Phi_t)_{t \in \R}$ with respect to $\mu$.

Following \cite{Cekic-Lefeuvre-21-1,Lefeuvre-23}, it is convenient for the purpose of studying ergodicity to introduce \emph{Parry's free monoid} and the \emph{transitivity group} of the extension $(\Phi_t)_{t \in \R}$. For what follows, we can assume that $\M$ (set to be $SM$ in the geodesic case) is any closed manifold carrying a transitive Anosov flow $(\varphi_t)_{t \in \R}$ as in \eqref{equation:anosov} generated by a vector field $X \in C^\infty(\M,T\M)$, $\E \to \M$ is a real or complex vector bundle equipped with an orthogonal or unitary connection $\nabla^{\E}$, respectively, $\X := \nabla_X^{\E}$, and $(\Phi_t)_{t \in \R}$ is the frame flow on $F\E$ as defined previously. Let $X_{F\E}$ be the vector field on $F\E$ generating $(\Phi_t)_{t \in \R}$. Note that there is also a natural flow (still denoted by $(\Phi_t)_{t \in \R}$) on the unit sphere bundle of $\E$, $S\E := \{(x, v, e) \in \E \mid |e|_x = 1\}$, where $|\bullet|_x$ denotes the norm in the fibre $E_x$, by simply considering parallel transport of a single section of $\E$ (and not a whole frame). Let us fix an arbitrary periodic point $z_\star \in \M$ of $\varphi_t$ and denote by $\gamma_\star$ its orbit.

Introduce $\mc{H}$, the set of \emph{homoclinic} orbits to $\gamma_\star$ (the orbits accumulating to $\gamma_\star$ in the past and in the future) and $\mathbf{G}$, \emph{Parry's free monoid}, defined as the following formal free monoid:
\[
\mathbf{G} := \left\{\gamma_{1}^{k_1} \dotsm \gamma_{p}^{k_p} ~|~ \gamma_{1}, \dotsc,\gamma_p \in \mc{H}, k_j \in \Z_{\geq 0}\right\}.
\]
Note that $\gamma_\star \in \mathbf{G}$. Given a connection, there is a natural way to define parallel transport from $
\E_{z_\star} \to \E_{z_\star}$ along each of these homoclinic orbits $\gamma \in \mc{H}$ (even though they have infinite length) using the notion of \emph{stable/unstable holonomies}.

This is defined as follows: given $\gamma \in \mc{H}$ with $\gamma \neq \gamma_\star$, pick arbitrary points $z_- \in W^u_{\M}(z_\star) \cap \gamma$ and $z_+\in W^s_{\M}(z_\star) \cap \gamma$, where $W^{s/u}_{\M}$ denote the strong stable/unstable manifolds associated to the Anosov flow $(\varphi_t)_{t \in \R}$ on $\M$ (see e.g. \cite[Chapter 6]{Fisher-Hasselblatt-19}). Consider an arbitrary Riemannian metric on $\M$, and for $x, y \in \M$ sufficiently close denote by $P_{x \to y}: \E_x \to \E_y$ the parallel transport with respect to $\nabla^{\E}$ along the shortest geodesic between $x$ and $y$. Then define the (linear) unstable holonomy $\mathrm{Hol}^u_{z_\star \to z_-} \colon \E_{z_\star} \to \E_{z_-}$ (and similarly $\mathrm{Hol}^s_{z_+ \to z_\star} \colon \E_{z_+} \to \E_{z_\star}$) by setting for $f \in \E_{z_\star}$:
\begin{equation}
\label{equation:unstable-holonomy}
	\mathrm{Hol}^u_{z_\star \to z_-} f := \lim_{t \to \infty} \Phi_t \circ P_{\varphi_{-t}z_\star \to \varphi_{-t}z_-} \circ \Phi_{-t} f,
\end{equation}
which converges thanks to the Ambrose-Singer formula \cite[Lemma 3.14]{Cekic-Lefeuvre-21-1}. It is clear from \eqref{equation:unstable-holonomy} that the unstable holonomy is a linear map; a similar definition to \eqref{equation:unstable-holonomy} works for the stable holonomy. Since $z_+ = \varphi_T(z_-)$ for some time $T > 0$, we can then define
\begin{equation}
\label{equation:holonomy}
\rho(\gamma) f := \mathrm{Hol}^s_{z_+ \to z_\star} \circ \Phi_T \circ \mathrm{Hol}^u_{z_\star \to z_-} f.
\end{equation}
We emphasize here that $\rho(\gamma)$ \emph{does} depend on the choice of points $z_\pm$. Making a different choice of points $z'_\pm$ instead of $z_\pm$ as above leads to a different $\rho'(\gamma)$ related to $\rho(\gamma)$ by
\begin{equation}
\label{equation:co}
\rho'(\gamma)=\rho(\gamma_\star)^a \rho(\gamma) \rho(\gamma_\star)^b,
\end{equation}
where $a,b \in \Z$. We refer to \cite[Proposition 3.15]{Cekic-Lefeuvre-21-1} or \cite[Section 2.1]{Lefeuvre-23} for further details on this construction. 

Identifying isometrically $\E_{z_\star} \simeq \F^r$, we thus get by \eqref{equation:holonomy} a monoid representation
\begin{equation}
\label{equation:parry-representation}
\rho \colon \mathbf{G} \to G,
\end{equation}
where $G = \mathrm{SO}(r)$ or $\mathrm{U}(r)$. The closure of the image of this representation $H := \overline{\rho(\mathbf{G})}$ is called the \emph{transitivity group}. It is \emph{independent} of the choice of points $z_\pm$ in the previous definition of the holonomy, as can be seen from the relation \eqref{equation:co}. It plays an important role as it describes the ergodic components of the frame flow $(\Phi_t)_{t \in \R}$ on $F\E$. This was discovered by Brin \cite{Brin-75-1,Brin-75-2} and refined later in \cite{Lefeuvre-23}:

\begin{theorem}[Brin '75]\label{prop:brin}
The following holds:
\begin{enumerate}
\item There exists a smooth flow-invariant principal $H$-subbundle $F\E \supset Q \to \M$ such that the restriction of $(\Phi_t)_{t \in \R}$ to $Q$ is ergodic. 
\item There exists a natural isomorphism
\[
\ker_{L^2(F\E)} X_{F\E} \simeq L^2(H\backslash\mathrm{SO}(r)),
\]
(and $\ker_{L^2(F\E)} X_{F\E} \simeq L^2(H\backslash\mathrm{U}(r))$ in the complex case).
\end{enumerate}
In particular, $(\Phi_t)_{t \in \R}$ is ergodic if and only if $H=\mathrm{SO}(r)$ (or $\mathrm{U}(r)$ in the complex case).
\end{theorem}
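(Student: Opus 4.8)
The plan is to follow Brin's original strategy \cite{Brin-75-1, Brin-75-2}, in the streamlined form developed in \cite{Cekic-Lefeuvre-21-1, Lefeuvre-23}. First I would construct the subbundle $Q$ of part (1) explicitly. Fixing an isometric identification $\E_{z_\star} \simeq \F^r$ — equivalently, a reference frame $w_\star \in F\E$ over $z_\star$ corresponding to $\mathrm{Id}$, so that the representation \eqref{equation:parry-representation} takes values in $G$ — I set $Q_{z_\star} := w_\star \cdot H \subset F\E_{z_\star}$, where $H \leqslant G$ acts on the right. For an arbitrary $z \in \M$, transitivity of $(\varphi_t)_{t \in \R}$ produces homoclinic orbits $\gamma \in \mc{H}$ passing arbitrarily close to $z$, and I define $Q_z$ by transporting $Q_{z_\star}$ along the associated composition of an unstable holonomy, a flow segment and a short parallel transport, then passing to the limit. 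The point to check is that $Q_z$ does not depend on the choices: this follows from the cocycle relation \eqref{equation:co} together with $\rho(\mathbf{G}) \subseteq H$, since changing the data alters the transported frame only by left multiplication by an element of $H$, which is absorbed into the right $H$-coset. Then $Q := \bigsqcup_{z} Q_z$ is a smooth principal $H$-subbundle of $F\E$, invariant under $(\Phi_t)_{t \in \R}$ and under all stable/unstable holonomies because these commute with the right $G$-action.

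Next I would prove that the restriction of $(\Phi_t)_{t \in \R}$ to $Q$ is ergodic. The key input is the \emph{Hopf argument} for partially hyperbolic isometric extensions: any flow-invariant $f \in L^2(Q)$ coincides, off a null set, with a function invariant under the stable and unstable holonomies of $(\Phi_t)_{t \in \R}$. I would establish this by comparing forward (resp.\ backward) Birkhoff averages of continuous test functions along strong stable (resp.\ unstable) leaves inside $Q$ — these converge to a common limit because the extension is isometric, so the fibre direction is neither contracted nor expanded while the base points converge — and then transferring the resulting almost-everywhere identities across leaves using absolute continuity of the stable/unstable foliations \cite{Brin-75-1, Brin-75-2, Hasselblatt-Pesin-06, Fisher-Hasselblatt-19}. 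Granting this, restrict $f$ to $Q_{z_\star}$ and pull it back to $H$ via $w_\star$: by \eqref{equation:holonomy} every $\rho(\gamma)$, $\gamma \in \mathbf{G}$, is a composition of an unstable holonomy, a flow segment and a stable holonomy, so $f|_{Q_{z_\star}}$ is invariant under left translation by $\rho(\mathbf{G})$, hence by $H = \overline{\rho(\mathbf{G})}$, hence constant (as $H$ acts transitively on itself by left translations). Since the transitivity group of the restricted extension is all of $H$, the standard Hopf-argument bootstrap then forces $f$ to be a.e.\ constant on $Q$, which proves ergodicity and hence part (1).

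For part (2), I would observe that $F\E$ is partitioned fibrewise into the flow-invariant principal $H$-subbundles $Q \cdot g$, $g \in G$, with $Q \cdot g = Q \cdot g'$ if and only if $Hg = Hg'$; thus $F\E = \bigsqcup_{Hg \in H\backslash G} Q\cdot g$, and the flow on each piece $Q \cdot g$ is conjugate (by the right action of $g$) to the ergodic flow on $Q$. Hence the ergodic decomposition of $(\Phi_t)_{t \in \R}$ on $F\E$ is indexed by the homogeneous space $H\backslash G$, yielding the natural isomorphism $\ker_{L^2(F\E)} X_{F\E} \simeq L^2(H\backslash G)$ (and $\simeq L^2(H\backslash \mathrm{U}(r))$ in the complex case). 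In particular $(\Phi_t)_{t \in \R}$ is ergodic on $F\E$ if and only if this space is one-dimensional, if and only if $H\backslash G$ is a single point, if and only if $H = G = \mathrm{SO}(r)$ (resp.\ $\mathrm{U}(r)$).

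The hard part is the Hopf argument — showing that an $L^2$ flow-invariant function is genuinely, modulo null sets, invariant under the stable and unstable holonomies of the merely partially hyperbolic flow $(\Phi_t)_{t \in \R}$. This rests on absolute continuity of its stable/unstable foliations and on careful control of the Birkhoff-average comparison in the fibre direction. The remaining steps — well-definedness of $Q$, the coset bookkeeping, and the decomposition of $F\E$ — are routine once this is in place.
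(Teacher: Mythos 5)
Your outline is essentially the argument the paper itself relies on: Theorem \ref{prop:brin} is not reproved in the paper but attributed to Brin \cite{Brin-75-1,Brin-75-2} as refined in \cite{Lefeuvre-23}, and your construction of $Q$ from the transitivity group via stable/unstable holonomies, the Hopf-type argument giving ergodicity on $Q$, and the coset decomposition of $F\E$ indexed by $H\backslash G$ is exactly that strategy. The one caveat is that the \emph{smoothness} of $Q$ asserted in the statement does not follow directly from your limit construction (which a priori yields only a H\"older-continuous flow-invariant subbundle) and likewise the restriction of an $L^2$ invariant function to the measure-zero fibre $Q_{z_\star}$ needs the usual density-point/absolute-continuity care; both points are handled by the regularity and non-Abelian Liv\v{s}ic machinery of \cite{Cekic-Lefeuvre-21-1,Lefeuvre-23}, so modulo these standard refinements your route coincides with the cited proof.
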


When the structure group of the bundle is semi-simple, ergodicity can be automatically upgraded to mixing. We refer to \cite[Corollary 3.10]{Lefeuvre-23} for a proof of this proposition. \\

Since the transitivity group $H$ acts on $\E_{z_\star} \simeq \F^r$, we can define the set of vectors that are $H$-invariant, namely:
\[
\left(\F^r\right)^{H} := \left\{ v \in \F^r ~|~ \forall h \in H, hv = v\right\}.
\]
The following \emph{non-Abelian Liv\v sic theorem} was proved in \cite[Theorem 1.3]{Cekic-Lefeuvre-21-1} and will play a crucial role.

\begin{theorem}[Non-Abelian Liv\v sic theorem]
\label{theorem:livsic}
The evaluation map
\[
\mathrm{ev} \colon C^\infty(\M,\E) \cap \ker \X \to \left(\F^r\right)^{H}, \qquad \mathrm{ev}(f) := f(z_\star) \in \F^r
\]
is an isomorphism.
\end{theorem}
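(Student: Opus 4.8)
Write $G=\mathrm{SO}(r)$ or $\mathrm{U}(r)$, fix an isometric identification $\E_{z_\star}\simeq\F^r$ given by an orthonormal frame $w_\star$, and note that $\mathrm{ev}$ is plainly $\F$-linear. So the plan is to prove in turn: (a) injectivity of $\mathrm{ev}$; (b) that its image is contained in $(\F^r)^H$; (c) that every $v\in(\F^r)^H$ lies in the image. By (a) alone one gets $\dim\big(C^\infty(\M,\E)\cap\ker\X\big)\le r<\infty$, so (a)+(c) give an isomorphism.

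\textbf{Injectivity.} The first observation is that, since $\nabla^{\E}$ is a metric connection, parallel transport along flowlines is an isometry; hence any $f$ with $\X f=0$ satisfies $|f(\varphi_t z)|_{\varphi_t z}=|f(z)|_z$ for all $z,t$, i.e.\ $|f|$ is flow-invariant. Now suppose $f(z_\star)=0$, so $f$ vanishes on the periodic orbit $\gamma_\star$. For every $z$ in the strong stable manifold $W^s_{\M}(z_\star)$ we have $\mathrm{dist}(\varphi_t z,\varphi_t z_\star)\to0$ as $t\to+\infty$, and continuity of $f$ together with $f|_{\gamma_\star}=0$ forces $|f(z)|=\lim_{t\to+\infty}|f(\varphi_t z)|=0$. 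Since the flow is transitive Anosov, $W^s_{\M}(z_\star)$ is dense in $\M$ (see e.g.\ \cite{Fisher-Hasselblatt-19}), so $f\equiv0$.

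\textbf{The image lies in $(\F^r)^H$.} Fix $f$ with $\X f=0$; I will show $f(z_\star)$ is fixed by $\rho(\gamma)$ for every $\gamma\in\mc{H}$, whence --- $\rho$ being a monoid homomorphism and the $G$-action continuous --- it is fixed by all of $H=\overline{\rho(\mathbf{G})}$. For $\gamma=\gamma_\star$ this is just flow-invariance of $f$ around the periodic orbit. For a general homoclinic orbit with marked points $z_-\in W^u_{\M}(z_\star)\cap\gamma$ and $z_+=\varphi_T(z_-)\in W^s_{\M}(z_\star)\cap\gamma$, the key point is that (un)stable holonomies act on the invariant section by evaluation:
\[
\mathrm{Hol}^u_{z_\star\to z_-}f(z_\star)=f(z_-),\qquad \mathrm{Hol}^s_{z_+\to z_\star}f(z_+)=f(z_\star).
\]
Indeed, in the defining limit \eqref{equation:unstable-holonomy}, flow-invariance turns $\Phi_{-t}f(z_\star)$ into $f(\varphi_{-t}z_\star)$; since $\mathrm{dist}(\varphi_{-t}z_\star,\varphi_{-t}z_-)\to0$ and $f$ is Lipschitz, the short-geodesic transport $P_{\varphi_{-t}z_\star\to\varphi_{-t}z_-}$ sends this to $f(\varphi_{-t}z_-)$ up to an error $\to0$; applying $\Phi_t$ (an isometry, so the error stays controlled) returns $f(z_-)$ in the limit, and symmetrically for the stable holonomy. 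Combining these with $\Phi_T f(z_-)=f(z_+)$ and the definition \eqref{equation:holonomy} gives $\rho(\gamma)f(z_\star)=f(z_\star)$.

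\textbf{Surjectivity.} This is the crux. Given $v\in(\F^r)^H$, I would invoke Theorem \ref{prop:brin}(1): there is a smooth, $\Phi_t$-invariant principal $H$-subbundle $Q\subset F\E$, normalized so that $w_\star\in Q$. Regarding sections of $\E$ as $G$-equivariant maps $s\colon F\E\to\F^r$ (with $s(wg)=g^{-1}s(w)$), define $s\equiv v$ on $Q$ and $s(wg):=g^{-1}v$ for $w\in Q$, $g\in G$. This is well defined: $Q$ being a principal $H$-bundle, any two presentations $wg=w'g'$ with $w,w'\in Q$ satisfy $w'=wh$, $g'=h^{-1}g$ for some $h\in H$, and then $g^{-1}v=g^{-1}hv=(g')^{-1}v$ because $v$ is $H$-invariant; this is also consistent with the declaration $s\equiv v$ on $Q$ since $hv=v$. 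The map $s$ is smooth because $Q$ and the $G$-action are. The corresponding section $f\in C^\infty(\M,\E)$ satisfies $f(z_\star)=s(w_\star)=v$, and $\X f=0$ because $\Phi_t$ preserves $Q$, on which $s$ is constant, and commutes with the right $G$-action. Hence $\mathrm{ev}$ is onto, and therefore an isomorphism. (If one prefers to avoid Theorem \ref{prop:brin}, one instead builds $f$ directly over the homoclinic orbits via \eqref{equation:holonomy}, using $H$-invariance of $v$ for consistency and the Anosov closing/shadowing lemma to get a well-defined continuous section, then upgrades continuity to smoothness by Livšic regularity theory; this is the route of \cite{Cekic-Lefeuvre-21-1}.)

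\textbf{Main difficulty.} Everything substantial is in surjectivity: producing a \emph{global smooth} flow-invariant section from the single vector $v$. Appealing to Theorem \ref{prop:brin} packages the hard dynamics --- density of orbits inside $Q$, and the passage from measurable to smooth invariant objects --- into a citable statement; carried out by hand, the delicate points are the \emph{continuity} of the section defined over a dense set of homoclinic data (needing the closing lemma) and the subsequent \emph{regularity bootstrap} (a Journé-type argument).
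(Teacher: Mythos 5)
The paper itself does not prove this statement: it quotes it from \cite[Theorem 1.3]{Cekic-Lefeuvre-21-1}, whose proof is the ``direct'' route you only sketch in your final parenthesis (define the candidate section along homoclinic/periodic data via the holonomies \eqref{equation:holonomy}, use the shadowing/closing lemma to get a well-defined continuous invariant section, then upgrade regularity). Your main argument is therefore a genuinely different packaging: injectivity and the inclusion $\mathrm{ran}(\mathrm{ev})\subset(\F^r)^H$ are proved by hand (and your computation that stable/unstable holonomies act on invariant sections by evaluation, $\mathrm{Hol}^u_{z_\star\to z_-}f(z_\star)=f(z_-)$, is exactly the right mechanism), while surjectivity is outsourced to Theorem \ref{prop:brin}(1): the smooth flow-invariant principal $H$-subbundle $Q\ni w_\star$ lets you extend any $H$-fixed vector equivariantly to a smooth invariant section. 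Given the statements as quoted in this paper, that chain of reasoning is correct, and it is arguably cleaner; what it buys is brevity, and what it costs is self-containedness, since the smoothness of $Q$ in the refined Brin theorem (due to \cite{Lefeuvre-23}, not to Brin's original work, which yields only a measurable/H\"older reduction) is itself established by the same homoclinic-holonomy and regularity (Journ\'e-type) machinery underlying the Liv\v sic theorem in \cite{Cekic-Lefeuvre-21-1}; unwound to first principles your route risks circularity, so it is a valid derivation only at the level of quoted results.

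Two smaller points to repair. First, in the injectivity step you invoke density of the \emph{strong} stable manifold $W^s_{\M}(z_\star)$; for a merely transitive (non-mixing) Anosov flow this can fail (e.g.\ constant-roof suspensions), whereas the \emph{weak} stable manifold of the periodic orbit $\gamma_\star$ is always dense: since $f$ vanishes on all of $\gamma_\star$ and $|f|$ is flow-invariant, the same limiting argument applied to points whose forward orbit converges to $\gamma_\star$ gives $f\equiv 0$ on this dense set. Second, your normalization ``$w_\star\in Q$ with structure group exactly $H$'' should be justified (right-translating $Q$ conjugates its structure group); this is harmless because the reduction of Theorem \ref{prop:brin} is constructed precisely from the holonomies based at the frame $w_\star$ used to define $\rho$, but it deserves a sentence.
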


We also state a useful and straightforward corollary. If $\mathfrak{o} \colon \mathrm{Vect} \to \mathrm{Vect}$ is one of the natural operations on the category of finite-dimensional vector spaces (symmetric, exterior power, tensor product, dual), one can consider the induced representation $\rho_{\mathfrak{o}} \colon \mathbf{G} \to \mathrm{End}(\mathfrak{o}(\F^r))$; observe that $\mathfrak{o}(\rho) = \rho_{\mathfrak{o}}$, i.e. $\rho_{\mathfrak{o}}$ agrees with the natural representation obtained by applying $\mathfrak{o}$ to $\rho$. The transitivity group on $\mathfrak{o}(\E)$ is then given by $H = \mathfrak{o}(\rho)(\mathbf{G})$ (we still use the same letter $H$ in order to avoid cumbersome notation). Hence from Theorem \ref{theorem:livsic} we get that the evaluation map is an isomorphism (here $\X$ is extended to act on $\mathfrak{o}(\E)$ naturally):

\begin{corollary}
\label{corollary:remark}
The evaluation map
\[
\mathrm{ev} \colon C^\infty(\M,\mathfrak{o}(\E)) \cap \ker \X \to \left(\mathfrak{o}(\F^r)\right)^{H}
\]
is an isomorphism.
\end{corollary}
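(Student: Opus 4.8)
The plan is to deduce Corollary~\ref{corollary:remark} directly from Theorem~\ref{theorem:livsic} by applying the latter to the bundle $\mathfrak{o}(\E)$ equipped with its naturally induced connection, rather than re-running the non-Abelian Liv\v{s}ic argument from scratch. First I would observe that any of the standard functors $\mathfrak{o}$ (symmetric power $S^k$, exterior power $\Lambda^k$, tensor power $\otimes^k$, dual $(\bullet)^*$, or finite direct sums/compositions of these) carries the metric connection $\nabla^\E$ to a canonical metric connection $\nabla^{\mathfrak{o}(\E)}$ on $\mathfrak{o}(\E)$, and that the operator $\X = \nabla_X^\E$ is transported to $\nabla_X^{\mathfrak{o}(\E)}$, which is what we continue to denote $\X$. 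Correspondingly the frame flow on $F\E$ induces the frame flow on $F(\mathfrak{o}(\E))$, and parallel transport along geodesics (as well as the stable/unstable holonomies defined in \eqref{equation:unstable-holonomy}) is intertwined by $\mathfrak{o}$: concretely, the holonomy representation $\rho_{\mathfrak{o}}\colon \mathbf{G}\to \End(\mathfrak{o}(\F^r))$ attached to $\mathfrak{o}(\E)$ satisfies $\rho_{\mathfrak{o}} = \mathfrak{o}(\rho)$, because $\mathfrak{o}$ commutes with all the limiting and composition operations appearing in \eqref{equation:unstable-holonomy} and \eqref{equation:holonomy}.

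The next step is the point about the structure group. A priori $\mathfrak{o}(\E)$ is a vector bundle of rank $N := \dim_\F \mathfrak{o}(\F^r)$ with a metric connection, but its holonomy lands in the (possibly smaller) closed subgroup $\mathfrak{o}(G)\leqslant \mathrm{SO}(N)$ or $\mathrm{U}(N)$ obtained by applying $\mathfrak{o}$ to $G = \mathrm{SO}(r)$ or $\mathrm{U}(r)$; this is harmless because Theorem~\ref{theorem:livsic} is stated for an arbitrary metric bundle with an arbitrary metric connection and does not require the structure group to be the full orthogonal/unitary group --- what matters is only the transitivity group $H := \overline{\rho_{\mathfrak{o}}(\mathbf{G})}$, which by the previous paragraph equals $\overline{\mathfrak{o}(\rho)(\mathbf{G})}$, i.e. exactly the group denoted $H$ in the statement of the corollary. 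Applying Theorem~\ref{theorem:livsic} verbatim to $(\mathfrak{o}(\E), \nabla^{\mathfrak{o}(\E)})$ then yields that
\[
\mathrm{ev}\colon C^\infty(\M,\mathfrak{o}(\E)) \cap \ker\X \to \big(\mathfrak{o}(\F^r)\big)^{H}, \qquad f\mapsto f(z_\star),
\]
is an isomorphism, which is the assertion.

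The only genuinely substantive point --- and the one I expect to need the most care --- is verifying the identity $\rho_{\mathfrak{o}} = \mathfrak{o}(\rho)$ at the level of the Parry representation, i.e. that the stable and unstable holonomies of the induced connection are the $\mathfrak{o}$-images of those of $\nabla^\E$. This follows because each holonomy $\mathrm{Hol}^{u}_{z_\star\to z_-}$ is defined in \eqref{equation:unstable-holonomy} as a limit of compositions of parallel transports and frame-flow maps, all of which are natural under $\mathfrak{o}$ (naturality of parallel transport under functorial constructions, plus continuity of $\mathfrak{o}$ so that $\mathfrak{o}$ of the limit is the limit of $\mathfrak{o}$ applied termwise), and similarly for $\mathrm{Hol}^s$ and for the finite-time parallel transport $\Phi_T$; composing through \eqref{equation:holonomy} gives $\rho_{\mathfrak{o}}(\gamma) = \mathfrak{o}(\rho(\gamma))$ for every $\gamma\in\mathbf{G}$, hence the two transitivity groups coincide. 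Everything else is bookkeeping, so I would keep the written proof to essentially the one-line deduction already sketched in the paragraph preceding the statement, perhaps spelling out only that Theorem~\ref{theorem:livsic} applies to $(\mathfrak{o}(\E),\nabla^{\mathfrak{o}(\E)})$ because its hypotheses concern an arbitrary metric connection on an arbitrary metric vector bundle over $\M$.
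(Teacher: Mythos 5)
Your proposal is correct and follows essentially the same route as the paper: the paper also deduces the corollary directly from Theorem \ref{theorem:livsic} applied to $\mathfrak{o}(\E)$ with its induced connection, after observing that the Parry representation of $\mathbf{G}$ on $\mathfrak{o}(\F^r)$ is exactly $\mathfrak{o}(\rho)$, so the relevant transitivity group and invariant vectors are those appearing in the statement. Your extra care in checking naturality of the stable/unstable holonomies under $\mathfrak{o}$ (limits and compositions of parallel transports) is precisely the point the paper leaves implicit.
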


Let us now return to the setting of the geodesic flow on $\M = SM$. Any section $f \in C^\infty(SM,\E)$ admits a decomposition into Fourier modes by using (pointwise in $x \in M$) Fourier analysis in the spherical variable of $SM$. More precisely, there exists a natural (non-negative) \emph{vertical Laplacian} operator $\Delta^{\E}_{\V} \colon C^\infty(SM,\E) \to C^\infty(SM,\E)$ defined locally as
\[
\Delta^{\E}_{\V}(f)(x) = \Delta^{\E}_{\V}\left(\sum_{k = 1}^r f_k(x,v) \mathbf{e}_k(x) \right) = \sum_{k = 1}^r \Delta_{\V} f_k (x,v) \mathbf{e}_k(x),
\]
where $(\mathbf{e}_1,...,\mathbf{e}_r)$ is a local trivialization of $\E = \pi^*E$ (note that the sections can be chosen to depend only on the $x \in M$ variable since $\E$ is a pullback bundle), and $\Delta_{\V}$ is the standard vertical Riemannian Laplacian, that is, the spherical Laplacian in each spherical fibers of the fibration $SM \to M$. Then, as in \eqref{equation:l2} we introduce 
\[
\ker(\Delta_{\V} - k(k+n-1)) = \Omega_k, \quad \ker(\Delta_{\V}^{\E}-k(k+n-1)) = \Omega_k \otimes E,
\]
where $\Omega_k \to M$ is the vector bundle of spherical harmonics of degree $k$ and
\begin{equation}\label{eq:L2-expansion}
L^2(SM,\E) = \oplus_{k \geq 0} L^2(M, \Omega_k \otimes E).
\end{equation}
A section $f \in L^2(SM,\E)$ is said to have \emph{finite Fourier content} (or \emph{degree}) if it has only finitely many non-zero terms in the expansion \eqref{eq:L2-expansion}; we will say that $f$ has \emph{zero Fourier degree} if all the terms corresponding to $k \geq 1$ in \eqref{eq:L2-expansion} vanish or equivalently, $f$ is constant in the fibers of $SM$, that is, there is $\widetilde{f} \in L^2(M, E)$ such that $f = \pi^*\widetilde{f}$. The following proposition allows us to make a bridge between ergodicity of these isometric extensions and the (non-)existence of polynomial maps between spheres. It was first obtained in \cite[Theorem 4.1]{Guillarmou-Paternain-Salo-Uhlmann-16}; recall that $(\E, \nabla^{\E}) = (\pi^*E, \pi^*\nabla^{E})$ and $\X = \nabla_X^{\E}$.

\begin{proposition}[Finiteness of the Fourier degree]
\label{theorem:finite}
Let $(M^{n+1},g)$ be a closed negatively curved Riemannian manifold and let $(E,\nabla^E)$ be as above. Assume that
\[
f \in C^\infty(SM, \E) \cap \ker \X.
\]
Then, $f$ has finite Fourier degree.
\end{proposition}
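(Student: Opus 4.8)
The strategy is to invoke the twisted Pestov (energy) identity for the operator $\X = \nabla_X^{\E}$ acting on sections of $\E = \pi^*E$ over $SM$, exactly as in the proof of \cite[Theorem 4.1]{Guillarmou-Paternain-Salo-Uhlmann-16}. Recall that the geodesic vector field $X$ on $SM$, together with the vertical and horizontal derivatives, satisfies commutator relations controlled by the curvature of $g$; twisting by the pullback connection $\nabla^{\E}$ introduces an additional curvature term $\pi^*F_{\nabla^E}$, which however is \emph{horizontal} and therefore behaves well with respect to the spherical harmonic decomposition. The key algebraic point is the standard decomposition $X = X_+ + X_-$ into raising and lowering parts with respect to the grading \eqref{eq:L2-expansion}, where $X_\pm \colon C^\infty(M,\Omega_k \otimes E) \to C^\infty(M,\Omega_{k\pm 1}\otimes E)$; an element $f \in \ker\X$ then has components $(f_k)_{k \geq 0}$ satisfying $X_- f_{k+1} = -X_+ f_{k-1}$ for all $k$.

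First I would set up the Fourier decomposition $f = \sum_{k\geq 0} f_k$ with $f_k \in C^\infty(M,\Omega_k\otimes E)$ and note that, since $f\in C^\infty(SM,\E)$, the tail $\sum_{k\geq N} f_k$ is smooth and in particular the $f_k$ decay rapidly in $L^2$. Next I would record the twisted Pestov identity in the form appropriate to pullback bundles: for $u \in C^\infty(SM,\E)$ with a fixed Fourier degree $k$, one has an inequality of the shape
\[
\|\nabla_{\V} X u\|^2 \geq \|X \nabla_{\V} u\|^2 + \big(\text{curvature terms}\big) - \big(\text{lower order in }k\big),
\]
where the curvature terms are nonnegative under the negative curvature assumption (this is where sectional curvature $<0$ enters, via the Pestov estimate $\langle R v^\perp, v^\perp\rangle$ term being sign-definite), and the twist contributes only a term bounded by $C\|u\|^2$ with $C$ independent of $k$. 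Iterating or summing this inequality over the components $f_k$ of a solution $f$ of $\X f = 0$, the harmonic constraint $X_- f_{k+1} = -X_+ f_{k-1}$ forces the high-degree components to satisfy a recursion whose only rapidly-decaying solution is eventually zero; concretely, one obtains a bound of the form $\|X_+ f_k\|^2 \geq c\, k\, \|f_k\|^2$ for $k$ large (after absorbing the twist), which combined with $\|X_+ f_k\| = \|X_- f_{k+2}\| \lesssim \|f_{k+2}\|$ and rapid decay yields $f_k = 0$ for all $k$ sufficiently large.

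I would then conclude: once $f_k = 0$ for $k \geq N$, the equation $\X f = 0$ becomes a finite triangular system, and $f = \sum_{k=0}^{N-1} f_k$ has finite Fourier degree as claimed. The main obstacle is getting the twisted Pestov identity into a usable form with constants uniform in the Fourier degree $k$ — i.e. verifying that the bundle curvature $\pi^*F_{\nabla^E}$ and the connection terms contribute only an error that is $O(\|f_k\|^2)$ rather than growing with $k$, so that the positive $O(k\|f_k\|^2)$ term from the base geometry dominates for large $k$. This is precisely the content of \cite[Theorem 4.1]{Guillarmou-Paternain-Salo-Uhlmann-16} (see also the Weitzenböck-type reformulation in \cite{Cekic-Lefeuvre-Moroianu-Semmelmann-23}), so in practice I would simply cite that result; the only thing to check is that the hypotheses there — a closed negatively curved manifold and a metric connection on a pullback bundle — match our setup verbatim, which they do since $(\E,\nabla^{\E}) = (\pi^*E, \pi^*\nabla^E)$.
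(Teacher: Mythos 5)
Your proposal is correct and takes essentially the same route as the paper: the paper gives no independent argument either, but simply invokes \cite[Theorem 4.1]{Guillarmou-Paternain-Salo-Uhlmann-16} (pointing to \cite[Corollary 4.2]{Cekic-Lefeuvre-Moroianu-Semmelmann-22} for a short self-contained twisted-Pestov proof), and this applies verbatim to $(\pi^*E,\pi^*\nabla^E)$ over a closed negatively curved manifold, exactly as you note. One minor caveat: your heuristic chain uses $\|X_- f_{k+2}\|\lesssim\|f_{k+2}\|$, which is not literally valid since $X_-$ is a first-order operator in the base variable, but this is immaterial because the cited theorem supplies the actual estimates.
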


We refer to \cite[Corollary 4.2]{Cekic-Lefeuvre-Moroianu-Semmelmann-22} for a short self-contained proof. Note that the version of Proposition \ref{theorem:finite} in the case where $(M,g)$ is merely Anosov (without any assumption on the curvature) is an open question. An important remark to keep in mind is that if $f \in C^\infty(SM,\pi^*E) \cap \ker \X$ has zero Fourier degree, then there exists a section $f_0 \in C^\infty(M,E)$ such that $f = \pi^*f_0$ and $\nabla^E f_0 = 0$. In what follows, for the sake of simplicity, we will not distinguish the notation between $f$ and $f_0$.

To encompass both cases (real and complex bundles), introduce the notation $q_{\F}(n)$, where we recall that
\begin{equation}
\label{equation:nb}
q_{\R}(n) := q(n)= q_{\mathrm{SO}}(n) -1, \qquad q_{\C}(n) := \tfrac{1}{2} q(n) = q_{\mathrm{U}}(n) - 1.
\end{equation}
By standard algebra, the representation $\rho$ of the monoid $\mathbf{G}$ admits a splitting
\begin{equation}
\label{equation:splitting}
\E_{z_\star} \simeq \F^r = \oplus^\bot_i \oplus_{j = 1}^{n_i} V_i^{(j)},
\end{equation}
where $V_i^{(j)} \subset \mathbb{F}^r$, $n_i \geq 1$, each $V_i^{(j)} \simeq V_i$ is $H$-invariant and irreducible, and the representations $V_i$ and $V_k$ are isomorphic if and only if $i = k$, see e.g. \cite[Chapter XVII]{Lang-02}. In particular, if $\rho$ is irreducible, then $V_1^{(1)} = \F^r$, $n_1=1$, and we will say that the splitting \eqref{equation:splitting} is \emph{trivial}.

\begin{lemma}
\label{lemma:irreducible}
Let $(M^{n+1},g)$ be a closed negatively curved Riemannian manifold. Let $a \in \mathbf{A}^{\F}_{\leq q_{\F}(n)}$ and assume that the corresponding representation of $\mathbf{G}$ admits the splitting \eqref{equation:splitting}. Then
\[
(E,\nabla^E) = \oplus^\bot_i \oplus_{j = 1}^{n_i} \big(F_i^{(j)},\nabla^{F_i^{(j)}}\big),
\]
where $\E = \oplus_i^\bot \oplus_{j = 1}^{n_i} \pi^* F_i^{(j)}$, $V_i^{(j)} = \big(\pi^*F_i^{(j)}\big)_{z_\star}$, and $\big([F_i^{(j)}], [\nabla^{F_i^{(j)}}]\big) = \big([F_i], [\nabla^{F_i}]\big)$ is irreducible. In particular, if $a \in \mathbf{A}^{\F}_{\leq q_{\F}(n)}$ is irreducible, then the transitivity group $H$ acts irreducibly on $\F^r$ and the splitting \eqref{equation:splitting} is trivial.
\end{lemma}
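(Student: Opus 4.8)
The strategy is to translate the algebraic splitting of the monoid representation $\rho$ into a geometric splitting of the pullback bundle $(\E,\nabla^{\E})$, and then to descend the summands to $M$ using the low-rank hypothesis via polynomial maps between spheres. First I would use the isotypic decomposition $\F^r = \oplus_i^\bot W_i$, where $W_i = \oplus_{j=1}^{n_i} V_i^{(j)}$ is the $H$-invariant isotypic component attached to the irreducible $V_i$. Each orthogonal projection $P_i \colon \F^r \to W_i$ commutes with the $H$-action, hence $P_i \in \End(\F^r)^H$. By Corollary \ref{corollary:remark} applied to the operation $\mathfrak{o}(\bullet) = \End(\bullet) = \bullet \otimes \bullet^*$, the evaluation map is an isomorphism onto $(\End(\F^r))^H$, so there is a parallel section $\mathsf{P}_i \in C^\infty(SM,\End(\E)) \cap \ker\X$ with $\mathsf{P}_i(z_\star) = P_i$. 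Since $\mathsf{P}_i$ is parallel along the geodesic flow, its eigenvalue structure ($\mathsf{P}_i^2 = \mathsf{P}_i = \mathsf{P}_i^*$, which propagates from the fibre at $z_\star$) is constant, so $\mathsf{P}_i$ is a smooth flow-invariant fibrewise orthogonal projection on $\E$, yielding a $\nabla^{\E}$-parallel, flow-invariant orthogonal splitting $\E = \oplus_i^\bot \E_i$ over $SM$ with $(\E_i)_{z_\star} = W_i$.

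Next I would show each $\mathsf{P}_i$ has \emph{zero Fourier degree}, so that it descends to $M$. By Proposition \ref{theorem:finite}, $\mathsf{P}_i$ has finite Fourier content; fixing a point $x_0 \in M$ and trivializing the pullback bundle $\End(\E)|_{S_{x_0}M}$ (trivial since $\E$ is a pullback bundle, hence trivial on each sphere fibre $S_{x_0}M \simeq \Ss^n$), the restriction of $\mathsf{P}_i$ to $S_{x_0}M$ becomes a map $\Ss^n \to \End(\F^r)$ whose coordinates are finite sums of spherical harmonics, i.e.\ a polynomial map, and it takes values in the space of orthogonal projections of rank $\dim W_i$, which is a real algebraic subvariety isomorphic to the Grassmannian $\mathrm{Gr}_{\F}(\dim W_i, r)$. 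Here is where the hypothesis $a \in \mathbf{A}^{\F}_{\leq q_{\F}(n)}$ enters: since $r \le q_{\F}(n) < q_G(n)$ for $G = \mathrm{Gr}_{\F}$ by Lemma \ref{lemma:determine} (note $q_{\mathrm{Gr}_{\F}(k,\bullet)}(n) = 1 + \max(k, q_{\F}(n)) > q_{\F}(n) \ge r$ whenever $k \le r$), there exists no non-constant polynomial map $\Ss^n \to \mathrm{Gr}_{\F}(k, r)$; therefore $\mathsf{P}_i|_{S_{x_0}M}$ is constant. As $x_0$ was arbitrary, $\mathsf{P}_i$ has zero Fourier degree, hence $\mathsf{P}_i = \pi^* P_i^M$ for a parallel orthogonal projection $P_i^M \in C^\infty(M, \End(E))$, giving the orthogonal splitting $(E,\nabla^E) = \oplus_i^\bot (E_i, \nabla^{E_i})$ over $M$ with $\pi^* E_i = \E_i$.

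It remains to split each isotypic summand $E_i$ into its $n_i$ isomorphic irreducible pieces and to verify irreducibility of the resulting bundles. Within $W_i \simeq V_i^{\oplus n_i}$, the commutant of $H$ is a full matrix algebra $\M_{n_i}(\D_i)$ (with $\D_i = \R,\C$ or $\HH$ the endomorphism division algebra of $V_i$); choosing a system of matrix units $\{E_{jk}\}$ realizes the $H$-equivariant isomorphisms $V_i^{(j)} \simeq V_i^{(k)}$ and the projections onto each $V_i^{(j)}$, all of which lie in $\End(\F^r)^H$. Running the same argument as above — Corollary \ref{corollary:remark} to produce parallel sections on $SM$, Proposition \ref{theorem:finite} plus the Grassmannian/isometry polynomial-rigidity to force zero Fourier degree — these descend to $M$, providing the refined parallel splitting $(E,\nabla^E) = \oplus_i^\bot \oplus_{j=1}^{n_i}(F_i^{(j)}, \nabla^{F_i^{(j)}})$ together with parallel bundle isometries $F_i^{(j)} \simeq F_i^{(k)}$ intertwining the connections. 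Finally, each $(F_i^{(j)}, \nabla^{F_i^{(j)}})$ is irreducible as a pair over $M$: a nontrivial parallel orthogonal splitting of it would, by Theorem \ref{theorem:livsic} (or Corollary \ref{corollary:remark}), produce an $H$-invariant proper subspace of $V_i^{(j)} \simeq V_i$ upon evaluation at $z_\star$, contradicting irreducibility of $V_i$. In particular, if $a$ itself is irreducible as a pair over $M$, then the transitivity group $H$ cannot preserve any proper subspace of $\F^r$ — otherwise the associated projection would descend to a nontrivial parallel splitting of $(E,\nabla^E)$ by the argument above — so $H$ acts irreducibly and \eqref{equation:splitting} is trivial.

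The main obstacle is the descent step: one must be careful that the parallel sections $\mathsf{P}_i$ (and the matrix units) produced on $SM$ genuinely have zero Fourier degree, which relies essentially on the low-rank bound forcing the target Grassmannians (and, for the isometries $F_i^{(j)}\simeq F_i^{(k)}$, the target $\mathrm{SO}(r)$ or $\mathrm{U}(r)$, also excluded for $r \le q_{\F}(n) < q_{\mathrm{SO}/\mathrm{U}}(n)$ by Lemma \ref{lemma:determine}) to admit only constant polynomial maps from $\Ss^n$; without this, the sections could have positive Fourier degree and fail to descend, and indeed the lemma is false in high rank.
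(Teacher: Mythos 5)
Your proposal is correct and follows essentially the same route as the paper's proof: lift the $H$-invariant orthogonal projections to flow-invariant sections via the non-Abelian Liv\v sic theorem and Corollary \ref{corollary:remark}, apply Proposition \ref{theorem:finite} so that restriction to a sphere fibre gives a polynomial map into a Grassmannian, force constancy by the low-rank bound together with Lemma \ref{lemma:determine}, descend to parallel projections on $M$, and deduce irreducibility (and the final claim about $H$) from the evaluation isomorphism. Your detour through isotypic components and matrix units merely supplies detail the paper leaves implicit (the paper descends the projections onto each $V_i^{(j)}$ of \eqref{equation:splitting} directly, and is terse about the gauge-equivalences $F_i^{(j)} \simeq F_i$), so this is a presentational difference rather than a different method.
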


\begin{proof}
By the non-Abelian Liv\v sic theorem (Theorem \ref{theorem:livsic}) and Corollary \ref{corollary:remark} applied with $\mathfrak{o}=S^2$ (symmetric endomorphisms -- they are the orthogonal projections onto the subspaces $V_i^{(j)}$ in \eqref{equation:splitting}), the splitting \eqref{equation:splitting} yields a flow-invariant smooth splitting of $\E$ over $SM$ as:
\[
\E = \oplus^\bot_i \oplus_{j = 1}^{n_i} \mc{F}_i^{(j)}, \qquad \mathbbm{1}_{\E} = \sum_i \sum_{j=1}^{n_i} \pi_{\mc{F}^{(j)}_i},
\]
where $ \pi_{\mc{F}^{(j)}_i}$ is the orthogonal projection onto the vector bundle $\mc{F}_i^{(j)}$. Note that we detect here $H$-invariant decomposition into subbundles by detecting $H$-invariant symmetric endomorphisms; this is a standard idea in representation theory.

Let $\tau$ be the orthogonal projector onto $\mc{F}_i^{(j)} =: \mc{F}$. Note that by assumption, $\mc{F}$ does not split further, that is, there is no non-trivial flow-invariant subbundle of $\mc{F}$. By Proposition \ref{theorem:finite} applied to $\pi^*S^2 E$, $\tau$ has finite Fourier content. By fixing a point $x \in M$ and identifying $S_xM \simeq \Ss^n$, we get a polynomial map $\tau_x \colon \Ss^n \to \mathrm{Gr}_{\F}(k,r)$, where $k = \mathrm{rank}(\mc{F}) \leq q_{\F}(n)$. Hence, by Lemma \ref{lemma:determine}, we deduce that $\tau_x$ is constant, that is, $\tau$ has zero Fourier degree and thus descends to a parallel orthogonal projector $\tau \in C^\infty(M,S^2 E)$ on a parallel subbundle $F \subset E$ such that $\pi^*F = \mc{F}$. Note that $(F,\nabla^E|_{F})$ is irreducible, otherwise any reduction of it would yield a reduction of $\mc{F}$ upstairs, which is excluded by assumption. This proves the claim.
\end{proof}

\begin{lemma}
\label{lemma:finite}
Let $(M^{n+1},g)$ be a closed negatively curved Riemannian manifold with $n \geq 2$. Assume that $([E],[\nabla^{E}]) \in \mathbf{A}^{\F}_{\leq q_{\F}(n)}$ does not have a finite holonomy group. Then, the transitivity group $H$ is not finite.
\end{lemma}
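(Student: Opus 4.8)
We prove the contrapositive: assuming that the transitivity group $H$ is finite, we show that $\mathrm{Hol}(E,\nabla^E)$ is finite. By Theorem \ref{prop:brin}, finiteness of $H$ produces a smooth flow-invariant principal $H$-subbundle $Q \subseteq F\E$ over $SM$; since $H$ is finite, $Q \to SM$ is in particular a finite covering.

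The first step is to repackage $Q$ as a flow-invariant section of a \emph{pullback} vector bundle, so that Proposition \ref{theorem:finite} applies. As $H$ is a closed subgroup of $G = \mathrm{SO}(r)$ (resp.\ $\mathrm{U}(r)$), a classical argument produces a finite-dimensional unitary representation $(\varrho, W)$ of $G$ together with $w \in W$ such that $\mathrm{Stab}_G(w) = H$; then the orbit $\mathcal{O} := G \cdot w \subseteq W$ is a $G$-invariant real algebraic subvariety equivariantly identified with $G/H$, and the orbit map $g \mapsto \varrho(g)w$ is a finite étale covering $G \to \mathcal{O}$. Form the associated bundle $\mathcal{W} := FE \times_G W \to M$, so that $F\E \times_G W = \pi^*\mathcal{W}$, and let $s_Q \in C^\infty(SM,\pi^*\mathcal{W})$ be defined by $s_Q(x,v) := [q,w]$ for an arbitrary frame $q \in Q_{(x,v)}$ (well defined since $\mathrm{Stab}_G(w) = H$). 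Flow-invariance of $Q$ translates into $\nabla^{\pi^*\mathcal{W}}_X s_Q = 0$, i.e.\ $s_Q \in C^\infty(SM,\pi^*\mathcal{W}) \cap \ker \X$; hence Proposition \ref{theorem:finite} shows that $s_Q$ has finite Fourier degree.

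Now fix $x \in M$ and identify $S_xM \simeq \Ss^n$. Since $\pi^*\mathcal{W}$ is trivial over the fibre $S_xM$, finiteness of the Fourier degree means that $s_Q$ restricts over $S_xM$ to a polynomial map $\Ss^n \to \mathcal{O} \simeq G/H$. The key point is that, under the low-rank hypothesis $r \leq q_\F(n)$, every polynomial map $\Ss^n \to G/H$ is constant. Indeed, as $n \geq 2$ the sphere $\Ss^n$ is simply connected and $G \to G/H$ is a finite covering, so such a map lifts to a continuous map $\Ss^n \to G$; using that $\Ss^n$ remains simply connected after complexification (so that the pullback along the lifted map of the étale cover $G \to G/H$ splits over $\R$ as a disjoint union of copies of $\Ss^n$) and that the ring of regular functions on the affine variety $\Ss^n$ consists of restrictions of polynomials, the lift is a polynomial map $\Ss^n \to G = \mathrm{SO}(r)$ (resp.\ $\mathrm{U}(r)$), which is then constant by Lemma \ref{lemma:determine} since $r \leq q_\R(n) < q_{\mathrm{SO}}(n)$ (resp.\ $r \leq q_\C(n) < q_{\mathrm{U}}(n)$). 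Consequently $s_Q$ has zero Fourier degree, so $s_Q = \pi^*\sigma_0$ for some $\sigma_0 \in C^\infty(M,\mathcal{W})$, and, as recalled in the excerpt, a flow-invariant section of zero Fourier degree is parallel: $\nabla^{\mathcal{W}}\sigma_0 = 0$. Being parallel and $\mathcal{O}$-valued, $\sigma_0$ corresponds to a parallel principal $H$-subbundle $Q_0 \subseteq FE$ with $\pi^*Q_0 = Q$; thus $\mathrm{Hol}(E,\nabla^E)$ is conjugate into $H$ and is therefore finite. This completes the contrapositive.

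I expect the main obstacle to be the algebraic-geometric step showing that the continuous lift of a polynomial map $\Ss^n \to G/H$ along the finite covering $G \to G/H$ is again polynomial: this requires some care over $\R$ (using the simple connectedness of the complexification $\Ss^n_\C$ for $n \geq 2$, equivalently an appeal to Zariski's main theorem, together with the description of regular functions on $\Ss^n$). Everything else combines Brin's structure theorem (Theorem \ref{prop:brin}), the finiteness of the Fourier degree (Proposition \ref{theorem:finite}) and the non-existence of non-constant polynomial maps into $\mathrm{SO}(r)$, $\mathrm{U}(r)$ in low rank (Lemma \ref{lemma:determine}); the existence of the representation $W$ with $\mathrm{Stab}_G(w) = H$ is classical. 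One could, if preferred, first invoke Lemma \ref{lemma:irreducible} to reduce to the case where $(E,\nabla^E)$ is irreducible and $H$ acts irreducibly, but the argument above does not seem to require this.
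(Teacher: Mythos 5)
Your overall strategy (encode the $H$-subbundle $Q$ as a flow-invariant section of an associated bundle, invoke Proposition \ref{theorem:finite}, and kill the resulting fiberwise polynomial map by low rank) is a genuinely different route from the paper's, but the step you yourself flag is a real gap, not a technicality. Two problems occur exactly there. First, the assertion that regular functions on the affine variety $\Ss^n$ are restrictions of polynomials is false in the real-algebraic-geometry sense: for instance $1/(2+x_0^2)$ is regular on $\Ss^n$ but is not a polynomial restriction (its product with $2+x_0^2$ minus $1$ cannot lie in the ideal $(\sum x_i^2-1)$, as one sees at complex points of the quadric with $x_0^2=-2$); the statement only holds scheme-theoretically, so the argument must be run in that category. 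Second, and more seriously, the pullback of $G \to G\cdot w$ along the \emph{complexified} map need not be étale over all of $\Ss^n_{\C}$: the complex Zariski closure of the compact orbit $\mathcal{O}=G\cdot w$ is the closure of the complex orbit $G_{\C}\cdot w$, which in general contains lower-dimensional boundary orbits not in the image of $G_{\C}$, and nothing prevents the complexification of your fiberwise polynomial map from meeting them. The covering is then only defined and étale over an open subset of $\Ss^n_{\C}$ which need not be simply connected, so the splitting argument (and hence the polynomiality of the lift $\Ss^n \to G$) is not established. This could conceivably be repaired by choosing the representation $(W,w)$ so that the complex orbit $G_{\C}\cdot w$ is closed (possible since a finite subgroup is reductive, so $G_{\C}/H$ is affine and admits a closed equivariant embedding), but that work is not in your proposal, and until it is done the key claim "every polynomial map $\Ss^n \to G/H$ is constant in low rank" is unproven.

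The paper avoids quotients $G/H$ altogether, which is why its argument closes: since $n\geq 2$ gives $\pi_1(SM)\simeq\pi_1(M)$, the finite cover $Q\to SM$ is pulled back from a finite cover $p\colon \widetilde M\to M$; over $\widetilde M$ the transitivity group is trivial, so the non-Abelian Liv\v sic theorem (Theorem \ref{theorem:livsic}) produces a flow-invariant \emph{orthonormal frame} of $\widetilde{\pi}^*p^*E$, i.e.\ fiberwise polynomial maps $\Ss^n\to \mathrm{SO}(r)$ (or $\mathrm{SU}(r)$ after the determinant trick), which Lemma \ref{lemma:determine} handles directly; constancy then makes $(p^*E,p^*\nabla^E)$ trivial flat, so $\mathrm{Hol}(E,\nabla^E)$ is finite. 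If you want to keep your associated-bundle formulation, you either need to justify the algebraic lifting step along the lines above, or simply substitute the paper's covering argument for it.
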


\begin{proof}
If $H$ is finite, then $Q \to SM$ is a finite covering (where the bundle $Q \subset F \mc{E}$ is given by Theorem \ref{prop:brin}). Since $n \geq 2$, $\pi_1(\Ss^n) = 0$ and thus the long exact sequence of the spherical fibration $\Ss^n \hookrightarrow SM \to M$ yields $\pi_1(M) \simeq \pi_1(SM)$. It implies that that there exists a finite cover $p \colon \widetilde{M} \to M$ such that $S\widetilde{M} \simeq Q$ and $S \widetilde{M} \to SM$ is the induced covering map. Lift the bundle $(E,\nabla^E) \to M$ to $(p^*E,p^*\nabla^E) \to \widetilde{M}$. Set $\widetilde{E} := p^*E$ and consider the induced frame flow on the frames of $\widetilde{\E} := \widetilde{\pi}^*\widetilde{E} \to S\widetilde{M}$ (where $\widetilde{\pi} \colon S\widetilde{M} \to \widetilde{M}$ is the projection). By construction, the transitivity group of this frame flow is trivial (see \cite[Lemma 3.12]{Lefeuvre-23} for more details). Hence, by Theorem \ref{theorem:livsic} we can find a global flow-invariant orthonormal basis $\e_1, \dotsc,\e_r \in C^\infty(S\widetilde{M},\widetilde{\E})$. Now, by evaluating the frame $\e := (\e_1, \dotsc,\e_r)$ at a point $x \in M$, we get a map $\e_x \colon \Ss^n \to \mathrm{SO}(r)$ (real case) or $\e_x : \Ss^n \to \mathrm{U}(r)$ (complex case); these maps are polynomial according to Proposition \ref{theorem:finite}. In the latter case, $\det(\e_x) : \Ss^n \to \Ss^1$ is also polynomial so it is constant as $n \geq 2$ (using that $q(2) = 2$ and Lemma \ref{lemma:non-decreasing}); hence, up to normalization, $\e_x : \Ss^n \to \mathrm{SU}(r)$. Since $r \leq q_{\F}(n)$, by Lemma \ref{lemma:determine} this map is constant, and thus the sections $\e_i$ have zero Fourier degree. In turn, this implies that they define parallel sections $\e_i \in C^\infty(\widetilde{M},\widetilde{E})$ over the base $\widetilde{M}$. As a consequence, $([p^*E], [p^*\nabla^E]) = ([\widetilde{M} \times \F^r], [d])$ is the trivial flat bundle equipped with the trivial flat connection. In turn, it implies that $(E,\nabla^E)$ is flat with finite holonomy group. This contradicts the assumption.
\end{proof}

From the discussion above, we can derive our result on the ergodicity of certain isometric extensions in low rank, that is, Theorem \ref{theorem:ergodicity}.

\begin{proof}[Proof of Theorem \ref{theorem:ergodicity}]
We first deal with the complex case. Our aim is to prove that the transitivity group $H \leqslant \mathrm{U}(r)$ satisfies $H = \mathrm{U}(r)$; this would complete the proof according to Theorem \ref{prop:brin}. By construction, $H$ provides an $H$-invariant Lie subalgebra $\mathfrak{h} \leqslant \mathfrak{u}(r)$ of real dimension $k \geq 0$. Note that $\mathfrak{h} \neq 0$ by Lemma \ref{lemma:finite}, that is, $k \geq 1$. By the non-Abelian Liv\v sic theorem (Theorem \ref{theorem:livsic}), we may consider $\tau \in C^\infty(SM,\pi^* S^2 \mathrm{End}_{\mathrm{sk}}(E))$, the flow-invariant orthogonal projection whose value at $z_\star$ is given by the orthogonal projection onto $\mathfrak{h}$ (here $\mathrm{End}_{\mathrm{sk}}$ denote the skew-Hermitian endomorphisms, which are identified with $\mathfrak{u}(r)$ at $z_\star$). By Proposition \ref{theorem:finite}, $\tau$ has finite Fourier content, that is, it is a fiberwise polynomial section. Arguing as in the proof of Lemma \ref{lemma:irreducible}, identifying $\End_{\mathrm{sk}}(E_{x_0}) \simeq \mathfrak{u}(r)$, and using that the real rank of $\mathfrak{u}(r)$ is $r^2$, we get by restricting $\tau$ to an arbitrary sphere $S_{x_0}M \simeq \Ss^n$ (for some $x_0 \in M$) a polynomial map $\Ss^n \to \mathrm{Gr}_{\R}(k,r^2)$. This map needs to be constant by Lemma \ref{lemma:determine} as the condition $r \leq m_{\C}(n)$ implies $r^2 \leq q_{\R}(n)$. Hence, $\tau \in C^\infty(M, S^2 \mathrm{End}_{\mathrm{sk}}(E))$ defines a parallel section on $M$. By assumption, the holonomy group of $(E,\nabla^E)$ is equal to $\mathrm{U}(r)$; it gives rise to the action of the holonomy group on $\End_{\mathrm{sk}}(E_{x_0})$ and this is precisely the adjoint action of $\mathrm{U}(r)$ on $\mathfrak{u}(r)$ after the identification $\End_{\mathrm{sk}}(E_{x_0}) \simeq \mathfrak{u}(r)$. The adjoint representation splits irreducibly as $\mathfrak{u}(r) = \mathfrak{su}(r) \oplus i\R I_r$; we thus obtain that $\tau(z_\star) \neq 0$ is either $\pi_{\mathfrak{su}(r)}, \pi_{i\R I_r}$ or $\mathbbm{1}$. Observe that the last possibility $\tau(z_\star) = \mathbbm{1}$ implies that $H = \mathrm{U}(r)$, that is, $(\Phi_t)_{t \in \R}$ is ergodic on $F\E$ by Theorem \ref{prop:brin}. As a consequence, it suffices to rule out the first two cases.

If $\tau(z_\star) = \pi_{\mathfrak{su}(r)}$, then $H$ is a finite disjoint union of copies of $SU(r)$, and the effective action of $H$ on $\det \C^r = \Lambda^r \C^r$ is that of a finite Abelian group $\Z_p$ (for some $p \geq 1$) and thus $H$ acts trivially on $(\det \C^r)^{\otimes p}$. In turn, by the non-Abelian Liv\v sic theorem (Theorem \ref{theorem:livsic}) and Corollary \ref{corollary:remark} (applied with $\mathfrak{o}(E) := (\det E)^{\otimes p}$), the line bundle $\pi^* (\det E)^{\otimes p}$ is \emph{transparent} over $SM$ (in the terminology of Paternain \cite{Paternain-09}), that is, there exists a flow-invariant section $s \in C^\infty(SM, \pi^* (\det E)^{\otimes p})$ and this has finite Fourier content by Proposition \ref{theorem:finite}. Restricting to a point $x_0 \in M$, we thus a get a polynomial map $\Ss^n \to \Ss^1$ (where $\Ss^1$ is the unit circle of $(\det E_{x_0})^{\otimes p}$) and this map needs to be constant since $n \geq 2$. Hence $s \in C^\infty(M,(\det E)^{\otimes p})$ defines a parallel section on the base $M$, that is, $(\det E)^{\otimes p}$ is the trivial flat line bundle and $(E,\nabla^E)$ thus admits a holonomy reduction, which contradicts the assumptions.

If $\tau(z_\star) = \pi_{i\R I_r}$, we can consider a finite cover $p \colon \widetilde{M} \to M$ as in the proof of Lemma \ref{lemma:finite}, equipped with the pullback bundle $(p^*E,p^*\nabla^E)$, so that the associated transitivity group $\widetilde{H}$ is connected (see also \cite[Lemma 3.12]{Lefeuvre-23}), and thus $\widetilde{H}=\mathrm{U}(1)$. The induced representation of $\widetilde{H}$ on $\C^r$ splits as a sum of irreducible representations $\C^r = \C \oplus \dotsb \oplus \C$ and thus by the non-Abelian Liv\v sic theorem (Theorem \ref{theorem:livsic}), there exists $r$ smooth flow-invariant complex line bundles $\mc{L}_1, \dotsc, \mc{L}_r$ such that $\widetilde{\pi}^*(p^*E) = \mc{L}_1 \oplus \dotsb \oplus \mc{L}_r$ (recall $\widetilde{\pi} \colon S\widetilde{M} \to \widetilde{M}$ is the projection). The orthogonal projection $\pi_{\mc{L}_i} \in C^\infty(S\widetilde{M}, \widetilde{\pi}^*(S^2 p^*E))$ onto each factor is flow-invariant so it is fiberwise polynomial by Proposition \ref{theorem:finite} and thus we obtain a polynomial map $\Ss^n \to \mathrm{Gr}_{\C}(1,r)$. The assumption $r \leq m_{\C}(n)$ implies $r^2 \leq q_{\R}(n)$ and using that $q_{\R}(n)$ is even, this also implies that $r \leq q_{\R}(n)/2$ (arguing separately for the cases $n = 2, 3$ and $n \geq 4$). By Lemma \ref{lemma:determine}, the inequality $r \leq q(n)/2$ then implies that these polynomial maps are constant, that is, $\pi_{\mc{L}_i} \in C^\infty(\widetilde{M},S^2 p^*E)$ is a parallel section on the base $\widetilde{M}$ and $p^*E = \mc{L}_1 \oplus \dotsb \oplus \mc{L}_r$ splits as a sum a parallel complex line bundles. Hence, $(p^*E, p^*\nabla^E)$ admits a holonomy reduction to $\mathrm{U}(1) \times \dotsb \times \mathrm{U}(1) \subset \mathrm{U}(r)$. In turn, this implies that $(E,\nabla^E)$ admits a holonomy reduction, which contradicts the assumption if $r \geq 2$ (if $r = 1$ there is nothing to prove since $\mathfrak{su}(1) = \{0\}$). \\

Let us now deal with the real case. Further assume that $r \neq 1,2,4$; we will deal with the remaining cases afterwards. As in the complex case, the transitivity subgroup $H \leqslant \mathrm{SO}(r)$ provides an $H$-invariant Lie algebra $0 \neq \mathfrak{h} \leqslant \mathfrak{so}(r)$. By the non-Abelian Liv\v sic theorem (Theorem \ref{theorem:livsic}) and Corollary \ref{corollary:remark} applied with $\mathfrak{o}=S^2\Lambda^2$, we get a flow-invariant projector $\tau \in C^\infty(SM, \pi^* S^2 \Lambda^2 E)$ onto a smooth flow-invariant subbundle of $\pi^*\Lambda^2 E$ of rank $k \geq 1$ (whose restriction to $z_\star$ is identified with $\mathfrak{h}$). As before, using that the rank of $\Lambda^2 E$ is $\tfrac{1}{2}r(r-1)$, we get by restriction of $\tau$ to a sphere $S_{x_0}M \simeq \Ss^n$ a polynomial map $\tau: \Ss^n \to \mathrm{Gr}_{\mathbb{R}}(k, \tfrac{1}{2}r(r-1))$. By assumption $r \leq m_{\R}(n)$ implies that $\tfrac{1}{2}r(r-1) \leq q_{\R}(n)=q(n)$ and thus this map is constant by definition of $q(n)$ and Lemma \ref{lemma:determine}. As a consequence, $\tau \neq 0$ is of degree zero and descends on the base to an orthogonal parallel projector $\tau \in C^\infty(M,S^2\Lambda^2 E)$ onto a rank $k$ subbundle. But $(E,\nabla^{E})$ has no holonomy reduction by assumption, so its holonomy group is $\mathrm{SO}(r)$ and since $\mathrm{SO}(r)$ acts irreducibly on $\Lambda^2 \R^r$ (since $\mathrm{SO}(r)$ is simple if $r \neq 1,2,4$ and $\Lambda^2\mathbb{R}^r$ is isomorphic to the adjoint representation), we obtain that $\tau = \mathbbm{1}$ is the identity, that is, $\mathfrak{h} = \mathfrak{so}(r)$ and $H = \mathrm{SO}(r)$. By Theorem \ref{prop:brin}, we conclude that the frame flow $(\Phi_t)_{t \in \R}$ is ergodic on $F\E$.

For $r=4$, we need to slightly adapt the argument. First of all, observe that the assumption $r = 4 \leq m_{\R}(n)$ yields $q(n) \geq 6 \geq q(4) = 4$, that is, $n \geq 4$ as $n \mapsto q(n)$ is non-decreasing by Lemma \ref{lemma:non-decreasing}. As before, up to taking a finite cover, we can also directly assume that $H$ is connected. Now, $\Lambda^2 \R^4$ splits as $\Lambda^2 \R^4 = \Lambda^+ \R^4 \oplus \Lambda^- \R^4$, the space of self-dual and anti self-dual $2$-forms, which give non-isomorphic irreducible representations of $\mathrm{SO}(4)$ (see Proposition \ref{proposition:2n} below where this is further discussed). Hence, the section $\tau \neq 0$ is either $\mathbbm{1}$ (in which case, the frame flow is ergodic) or $\pi_{\Lambda^\pm\R^4}$, one of the two orthogonal projections onto $\Lambda^\pm\R^4$. In both cases, since $H$ is connected, we obtain that $H$ is equal to one of the two $\mathrm{SU}(2)$ factors of $\mathrm{SO}(4) \simeq \mathrm{SU}(2) \times \mathrm{SU}(2)/\Z_2$. Hence, $H$ acts trivially on either $\Lambda^+ \R^4$ or $\Lambda^- \R^4$ and, without loss of generality, we can assume that it acts trivially on $\Lambda^- \R^4$. Applying once again Theorem \ref{theorem:livsic}, we deduce that, in particular, $\pi^* \Lambda^- E$ admits a flow-invariant section $s \in C^\infty(SM,\pi^*\Lambda^-E)$. This yields a polynomial mapping $\Ss^n \to \Ss^2$ which is necessarily constant since $n \geq 4$ (by the preliminary remark) so $s \in C^\infty(M,\Lambda^- E)$ is a parallel section. But then, this contradicts the assumption that $(E,\nabla^E)$ has no holonomy reduction.

For $r=2$, either $H = \mathrm{SO}(2)$ (ergodicity) or $H$ is a finite Abelian group, in which case we can directly assume (up to taking a finite cover of $M$) that $H = \left\{0\right\}$. But then, there exists by Theorem \ref{theorem:livsic} a flow-invariant section $s \in C^\infty(SM,\pi^*E)$ which must be of Fourier degree $0$ by the same arguments as before, that is, $s \in C^\infty(M,E)$ is parallel. In turn, this implies that $(E,\nabla^E) = (\R^2,d)$ is the trivial flat bundle, which contradicts the holonomy assumption. Eventually, the case $r=1$ is empty. This concludes the proof.
\end{proof}

If $\E$ is a real vector bundle over $SM$, one can also consider the lift $(\Phi_t)_{t \in \R}$ of the geodesic flow to $S \E$, the unit sphere bundle of $\E$ over $SM$, given by parallel transport of sections of $\E$ along geodesic flowlines. We say that the holonomy group of $(E, \nabla^{E})$ has \emph{irreducible identity component} if $\mathrm{Hol}_0(E,\nabla^E) \leqslant \mathrm{Hol}(E,\nabla^E)$, the connected component of the holonomy group $\mathrm{Hol}(E,\nabla^E)$ containing the identity (that is, the holonomy group obtained by restricting to homotopically trivial loops), acts irreducibly on $\R^r$ or $\C^r$ in the real or complex cases, respectively.

\begin{corollary}
Let $(M^{n+1},g)$ be a closed negatively curved Riemannian manifold with $n \geq 4$. Let $(E,\nabla^{E}) \in \mathbf{A}^{\R}_{\leq 4}$ be an orthogonal connection on a real vector bundle $E \to M$ such that $r := \mathrm{rank}(E) \leq 4$. Further assume that the holonomy group of $(E, \nabla^{E})$ has irreducible identity component. Then $(\Phi_t)_{t \in \R}$ is ergodic on $S \E$.
\end{corollary}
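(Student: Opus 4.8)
The plan is to reduce the ergodicity of $(\Phi_t)_{t\in\R}$ on $S\E$, via Brin's structure theorem, to the transitivity of the transitivity group $H$ on the fibre sphere $\mathbb{S}^{r-1}$, and then to exploit that for $r\le 4$ every connected subgroup of $\mathrm{SO}(r)$ acting irreducibly on $\R^r$ already acts transitively on $\mathbb{S}^{r-1}$. For the first reduction I would note that, from Theorem~\ref{prop:brin}(2), using the identification $S\E\simeq F\E/\mathrm{SO}(r-1)$ (with $\mathbb{S}^{r-1}\simeq\mathrm{SO}(r)/\mathrm{SO}(r-1)$) and passing to invariants under the residual right $\mathrm{SO}(r-1)$-action, one obtains
\[
\ker_{L^2(S\E)}X_{S\E}\ \simeq\ L^2\big(H\backslash\mathbb{S}^{r-1}\big),
\]
where $H$ acts on $\mathbb{S}^{r-1}\subset\R^r$ in the standard way. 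Since for a compact group the invariant $L^2$-functions on $\mathbb{S}^{r-1}$ reduce to the constants precisely when the action is transitive, $(\Phi_t)_{t\in\R}$ is ergodic on $S\E$ if and only if $H$ is transitive on $\mathbb{S}^{r-1}$. We may assume $2\le r\le 4$.

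The main step is to show that the identity component $H_0$ of $H$ already acts transitively on $\mathbb{S}^{r-1}$. Since $\mathrm{Hol}_0(E,\nabla^E)\leqslant\mathrm{Hol}(E,\nabla^E)$ acts irreducibly on $\R^r$, the pair $(E,\nabla^E)$ is irreducible. I would then pass to a finite cover $p\colon\widetilde M\to M$ — again closed and negatively curved — for which the transitivity group $\widetilde H$ of $(p^*E,p^*\nabla^E)$ is connected; this is provided by \cite[Lemma~3.12]{Lefeuvre-23}, as already invoked in the proofs of Lemma~\ref{lemma:finite} and Theorem~\ref{theorem:ergodicity}, and then $\widetilde H=H_0$. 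The crucial point is that restricted holonomy is unaffected by a finite cover, so $\mathrm{Hol}_0(p^*E,p^*\nabla^E)=\mathrm{Hol}_0(E,\nabla^E)$ still acts irreducibly on $\R^r$, hence $(p^*E,p^*\nabla^E)$ is again irreducible. Because $n\ge 4$ forces $q_{\R}(n)=q(n)\ge q(4)=4\ge r$, we have $(p^*E,p^*\nabla^E)\in\mathbf{A}^{\R}_{\le q_{\R}(n)}$, so Lemma~\ref{lemma:irreducible} applied on $\widetilde M$ shows that $\widetilde H=H_0$ acts irreducibly on $\R^r$.

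Finally, $H_0$ is a connected closed subgroup of $\mathrm{SO}(r)$ acting irreducibly on $\R^r$ with $r\le 4$; I would conclude by inspecting the elementary list of faithful irreducible real representations of connected compact Lie groups in dimension $\le 4$. For $r=2$ this forces $H_0=\mathrm{SO}(2)$; for $r=3$, $H_0=\mathrm{SO}(3)$; and for $r=4$, the group $H_0$ together with its action must be one of $\mathrm{SO}(4)$, $\mathrm{U}(2)$ (on $\R^4\simeq\mathbb{C}^2$) or $\mathrm{SU}(2)=\mathrm{Sp}(1)$ (on $\R^4\simeq\mathbb{C}^2\simeq\HH$). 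In each case $H_0$ acts transitively on $\mathbb{S}^{r-1}$ — for instance $\mathrm{SU}(2)\simeq\mathbb{S}^3$ acts transitively on itself by left translations — so $H\supseteq H_0$ does too, and by the first step $(\Phi_t)_{t\in\R}$ is ergodic on $S\E$.

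I expect the delicate point to be conceptual rather than computational: carefully tracking which properties are stable under a finite cover. The transitivity group becomes connected and the restricted holonomy group is preserved, but the full holonomy group — and with it irreducibility of $(E,\nabla^E)$ itself — need not be; this is precisely why the hypothesis must be imposed on $\mathrm{Hol}_0$ and not on $\mathrm{Hol}$. The low-rank assumption $r\le 4$ is equally essential: already for $r=5$ the group $\mathrm{SO}(3)$ acts irreducibly on $\R^5$ (on trace-free symmetric matrices) without being transitive on $\mathbb{S}^4$, so the short classification used above has no analogue in higher rank.
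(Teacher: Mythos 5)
Your proof is correct and follows essentially the same route as the paper's: pass to a finite cover making the transitivity group equal to $H_0$, use Lemma \ref{lemma:irreducible} (with $r\le 4\le q(n)$ for $n\ge 4$) to link irreducibility of the restricted holonomy with irreducible action of $H_0$, and conclude transitivity on $\mathbb{S}^{r-1}$, hence ergodicity via the Brin-type criterion of Theorem \ref{prop:brin}. The only differences are cosmetic: you argue directly (irreducible connected subgroups of $\mathrm{SO}(4)$ are $\mathrm{SO}(4)$, $\mathrm{U}(2)$, $\mathrm{SU}(2)$, all transitive) where the paper argues by contradiction through the classification of subgroups of $\mathrm{SU}(2)\times\mathrm{SU}(2)$, and you derive the sphere-bundle ergodicity criterion by taking $\mathrm{SO}(r-1)$-invariants in Theorem \ref{prop:brin}(2) rather than citing it from the literature.
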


\begin{proof}
We deal with the case $r=4$, the other cases being similar. Denote by $H_0 \leqslant H$ the identity component of $H$. By \cite[Lemma 3.3]{Cekic-Lefeuvre-Moroianu-Semmelmann-21} there exists a finite Riemannian cover $p: (\widetilde{M}, \widetilde{g}) \to (M, g)$ equipped with the pullback bundle and connection $(\widetilde{E}, \nabla^{\widetilde{E}}) = (p^*E, p^*\nabla^E)$, denoting $(\widetilde{\E}, \nabla^{\widetilde{\E}})$ the pullback to $S\widetilde{M}$ and the induced flow on $S\widetilde{\E}$ by $(\widetilde{\Phi}_t)_{t \in \mathbb{R}}$, such that the transitivity group of $(\widetilde{\Phi}_t)_{t \in \mathbb{R}}$ is equal to $H_0$. Note that ergodicity of this flow implies that of $(\Phi_t)_{t \in \mathbb{R}}$. Observe that there is a double covering homomorphism
\[
	\Pi: \mathrm{SU}(2) \times \mathrm{SU}(2) \to \mathrm{SO}(4), \quad \Pi(a, b) q = a q b^{-1}, \,\,q \in \mathbb{R}^4,
\]
where we think of $\mathrm{SU}(2)$ as the group of unit quaternions acting on $\mathbb{R}^4$; we have $\ker \Pi = \{(1, 1), (-1, -1)\} \simeq \mathbb{Z}_2$. Let $H_0' := \Pi^{-1} H_0$ and consider the left and right projections $H_{0L}' := \pi_L(H_0')$ and $H_{0R}' := \pi_R(H_0')$ onto $\mathrm{SU}(2)$ factors, respectively. 

If $H_{0L}'$ or $H_{0R}'$ project onto $\mathrm{SU}(2)$ then by the classification in \cite[page 170, Cases (1)--(10)]{Mendes-75} we know that either: 1) $H_0$ acts transitively on $\mathbb{S}^3$, in which case $(\widetilde{\Phi}_t)_{t \in \mathbb{R}}$ is ergodic by \cite[Theorem 1]{Lefeuvre-23}, or 2) the representation of $H_0$ on $\mathbb{R}^4$ is reducible. (Note that reducibility of the $H_0$ representation on $\mathbb{R}^4$, as well as transitivity of the $H_0$-action on $\mathbb{S}^3$ by \cite[Proposition, page 167]{Mendes-75}, is encoded in the \emph{P\'olya function} which contains the data of invariant vectors inside symmetric trace-free powers $\mathrm{Sym}_0^k (\mathbb{R}^4)$ for $k \geq 1$. Transitivity is equivalent to the \emph{P\'olya function} being equal to the constant function $1$.) In the case 2), by Lemma \ref{lemma:irreducible}, together with the fact that $q_{\R}(n) \geq q_{\R}(4) = 4$ by Lemma \ref{lemma:non-decreasing} and \eqref{equation:q}, we obtain that $(\widetilde{E}, \nabla^{\widetilde{E}})$ is reducible, hence contradicting the fact that $(E, \nabla^{E})$ has irreducible identity component (as $\mathrm{Hol}_0(E,\nabla^E) \leqslant \mathrm{Hol}(\widetilde{E}, \nabla^{\widetilde{E}})$).

Otherwise, $H_{0L}'$ and $H_{0R}'$ are both at most $1$-dimensional so $\dim H_0 \leq 2$ and therefore $H_0$ is isomorphic to either the trivial group, $\mathbb{S}^1$, or $\mathbb{S}^1 \times \mathbb{S}^1$, and hence has again a reducible action on $\mathbb{R}^4$. As before, this contradicts the irreducibility of the identity component of $\mathrm{Hol}(E, \nabla^{E})$. This completes the proof.
\end{proof}

For surfaces, the situation is slightly different but it involves studying the limiting case where the rank has the dimension of the base. If $(M^2,g)$ is a (closed oriented) Riemannian surface, let $(T_{\mathbb{C}}^*M)^{0, 1}:= \kappa \to M$ be the canonical line bundle. Using the Gysin exact sequence, it can be checked that $\E := \pi^*\kappa \to SM$ is trivial. Moreover, letting $\nabla^{\mathrm{LC}}$ be the Levi-Civita connection on $\kappa$, the induced frame flow $(\Phi_t)_{t \in \R}$ on $F\E \to SM$ is trivial in the sense that it has trivial transitivity group and is thus conjugate to the flow $(\varphi_t, \mathbf{1}_{\mathrm{U}(1)})_{t \in \R}$ on $SM \times \mathrm{U}(1) \simeq F\E \simeq S\E$. In the terminology of Paternain \cite{Paternain-09}, the pair $k := (\kappa, \nabla^{\mathrm{LC}}) \in \mathbf{A}^{\C}_1$ defines a \emph{transparent connection}, that is, its holonomy along every closed geodesic is trivial.

\begin{corollary}
\label{corollary:surfaces}
Let $(M^2,g)$, be a closed Anosov Riemannian surface. Let $a := (E,\nabla^{E}) \in \mathbf{A}^{\C}_{1}$ be a unitary connection on a complex line bundle $E \to M$. Then the frame flow $(\Phi_t)_{t \in \R}$ on $F\E \to SM$ is ergodic, unless there exists $p \in \Z_{\geq 0}, m \in \Z$ such that $a^{\otimes p} = k^{\otimes m}$. 
\end{corollary}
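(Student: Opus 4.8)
The plan is to reduce everything to the rank-$1$ complex case, where the transitivity group $H$ is a closed subgroup of $\mathrm{U}(1)$. There are only two possibilities: either $H = \mathrm{U}(1)$, in which case $(\Phi_t)_{t \in \R}$ is ergodic by Theorem~\ref{prop:brin}, or $H$ is a finite cyclic group $\Z_p$ for some $p \geq 1$. So the whole content is to show that the second case forces a relation of the form $a^{\otimes p} = k^{\otimes m}$. First I would pass to the representation $\rho \colon \mathbf{G} \to \mathrm{U}(1)$ associated to $a$ and observe that if $H = \overline{\rho(\mathbf{G})} = \Z_p$, then the monoid $\mathbf{G}$ acts trivially through $a^{\otimes p}$; equivalently, the transitivity group of $a^{\otimes p} \in \mathbf{A}^{\C}_1$ is trivial. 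By the non-Abelian Liv\v sic theorem (Theorem~\ref{theorem:livsic}), this means $\pi^*(E^{\otimes p}) \to SM$ admits a nowhere-vanishing flow-invariant section $s \in C^\infty(SM, \pi^*(E^{\otimes p}))$, i.e.\ $a^{\otimes p}$ is a \emph{transparent} connection in Paternain's sense.

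The next step is to identify transparent connections on line bundles over an Anosov surface. Here I would invoke Proposition~\ref{theorem:finite}: the flow-invariant section $s$ has finite Fourier degree. Restricting to a single fiber $S_{x_0}M \simeq \Ss^1$, the section $s$ (after trivializing $\pi^*(E^{\otimes p})$ over the fiber, which is possible since $\Ss^1 \to SM \to M$ has the relevant bundle pulled back from $M$) gives a polynomial map $\Ss^1 \to \Ss^1$; in dimension $n = 1$ these need not be constant — this is the key difference from the higher-dimensional arguments in the proofs of Theorems~\ref{theorem:main} and \ref{theorem:ergodicity}, and it is exactly why the base case $\dim M = 2$ is exceptional. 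A polynomial (indeed, by the finite-degree property, a finite Fourier series) map $\Ss^1 \to \Ss^1$ has a well-defined winding number $m \in \Z$. The idea is that $s$ has constant winding number $m$ over all fibers (by continuity and connectedness of $M$), and that this winding number is precisely the obstruction measured by the bundle $\E^{\otimes p}$ being trivial — in other words, $s$ fails to descend to the base exactly by the amount $m$, and twisting by $\kappa^{\otimes(-m)}$ (whose pullback frame flow is transparent, as recalled just before the statement) corrects this. Concretely, since $\pi^*\kappa \to SM$ is trivial with transparent Levi-Civita connection, the section $s$ together with a flow-invariant section of $\pi^*\kappa^{\otimes m}$ produces a flow-invariant section of $\pi^*(E^{\otimes p} \otimes \kappa^{-\otimes m})$ of \emph{zero} Fourier degree, hence a parallel nowhere-vanishing section of $E^{\otimes p} \otimes \kappa^{-\otimes m}$ over $M$. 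That section trivializes $E^{\otimes p} \otimes \kappa^{-\otimes m}$ as a flat line bundle with the trivial connection, which is precisely the statement $a^{\otimes p} = k^{\otimes m}$.

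To make the winding-number bookkeeping rigorous, I would work on the universal cover or, more efficiently, use the Gysin sequence for $\Ss^1 \hookrightarrow SM \to M$ to track how sections of pullback line bundles over $SM$ relate to sections over $M$: the Euler class of the fibration is (a multiple of) the Euler class of $TM$, which is where $\kappa$ enters. The cleanest route is: let $d \in \Z$ be the degree of $E^{\otimes p}$ and let $e = \chi(M)$ (up to sign) be the degree of $\kappa$; the flow-invariant section $s$ exists iff $\pi^*(E^{\otimes p})$ is topologically trivial over $SM$, which by Gysin happens iff $d$ is a multiple of $e$, say $d = -me$ — and then $m$ is the integer we want, with $E^{\otimes p} \otimes \kappa^{-\otimes m}$ topologically trivial and, by the transparency/zero-Fourier-degree argument, flatly trivial.

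The main obstacle I expect is the last point: transparency of $a^{\otimes p}$ gives a flow-invariant section of finite Fourier degree, but one must carefully argue that, after correcting by the appropriate power of $\kappa$, the degree can be pushed down to zero — i.e.\ that the \emph{only} obstruction to descending a flow-invariant section from $SM$ to $M$ is the topological winding/Euler-class one, with no further dynamical obstruction surviving once the topology is trivialized. This should follow from combining the non-Abelian Liv\v sic theorem with the finite-Fourier-degree proposition and the explicit transparent model $(\pi^*\kappa, \pi^*\nabla^{\mathrm{LC}})$, but the degree-reduction step requires the most care. Conversely, the implication that $a^{\otimes p} = k^{\otimes m}$ \emph{does} obstruct ergodicity is immediate: a flat trivialization of $E^{\otimes p}$-twisted-by-$\kappa^{-m}$ together with transparency of $k$ forces $\rho(\mathbf{G})^p$ to be trivial, so $H \subseteq \Z_p \subsetneq \mathrm{U}(1)$, and Theorem~\ref{prop:brin} denies ergodicity.
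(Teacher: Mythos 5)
Your opening reduction is the same as the paper's: if the flow is not ergodic then $H \leqslant \mathrm{U}(1)$ is finite, say $H = \Z/p\Z$, so $a^{\otimes p}$ has trivial transitivity group and is therefore a transparent connection. At that point the paper simply invokes Paternain's classification of transparent connections on complex line bundles over Anosov surfaces (Theorems 3.1 and 3.2 of \cite{Paternain-09}), which says these are exactly $\{k^{\otimes m} \mid m \in \Z\}$, and the proof ends. You instead try to reprove this classification, and that is where the argument breaks down in two concrete places. First, you invoke Proposition \ref{theorem:finite} to get finite Fourier degree of the invariant section, but that proposition is stated (and known) only under negative curvature; the paper explicitly remarks that its Anosov version is an open question, so your route cannot reach the Anosov generality in which the corollary is stated --- the case you would need is exactly what the cited reference supplies by other means.

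Second, and more seriously, the key step of your sketch --- that after twisting $s$ by the canonical invariant section of $\pi^*\kappa^{\otimes(-m)}$, chosen so that the vertical winding number vanishes, the resulting invariant section has \emph{zero} Fourier degree and hence descends to a parallel section on $M$ --- is asserted rather than proved, and it does not follow from the topological (Gysin/winding number) bookkeeping. A circle-valued map on $\Ss^1$ given by a finite Fourier series with winding number zero can have arbitrarily many nonzero Fourier modes (e.g.\ $e^{i\sin\theta}$ truncations), so ``zero winding'' does not give ``fiberwise constant''. Excluding a flow-invariant, unit-norm, fiberwise non-constant section of a topologically trivial pullback line bundle is precisely the content of Paternain's theorem (a transparent connection on a degree-zero line bundle is gauge-trivial); its proof needs a genuinely dynamical argument on the Fourier modes of the invariant section (twisted holomorphicity of the extremal modes and a vanishing/uniqueness argument), not just the Euler-class computation. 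So the heart of the proof --- the classification of transparent line-bundle connections --- is missing from your proposal; everything before and after it is fine, and your final observation that the relation $a^{\otimes p}=k^{\otimes m}$ (with $p\geq 1$) conversely obstructs ergodicity is correct but not required by the statement.
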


Equivalently, the last equality can be written $a = ((\kappa^{\otimes m})^{\otimes 1/p}, ((\nabla^{\mathrm{LC}})^{\otimes m})^{\otimes 1/p})$, where the index $\otimes 1/p$ stands for the $p$-th unit root.
Topologically, the $p$-th unit root of the line bundle $\kappa^{\otimes m} \to M$, when it exists, is always unique. Actually, it exists if and only if the first Chern class of $\kappa^{\otimes m}$ is divisible by $p$, that is, $m c_1(\kappa) = m (2g-2)$ is divisible by $p$ (where $g$ is the genus of $M$), and it is then given by the unique complex line bundle over $M$ whose first Chern class is $m(2g-2)/p$. However, $((\nabla^{\mathrm{LC}})^{\otimes m})^{\otimes 1/p}$ is not unique and the choice of such an $p$-th root of the connection is parametrized by $H^1(M,\Z/p\Z) = (\Z/p\Z)^{2g}$ (see \cite[Section 21]{Forster-81}).

\begin{proof}
The proof follows from the above description and earlier observations due to Paternain, see \cite[Theorems 3.1 and 3.2]{Paternain-09}.
Indeed, assume that the frame flow $(\Phi_t)_{t \in \R}$ is not ergodic. Then the transitivity group $H \leqslant \mathrm{SO}(2)$ is finite, that is, $H = \Z/p\Z$ for some $p \in \Z_{\geq 1}$. But then the frame flow $(\Phi_t^{\otimes p})_{t \in \R}$ induced by $a^{\otimes p}$ has trivial transitivity group. This is equivalent to saying that $a^{\otimes p}$ is transparent (the holonomy is trivial along every closed geodesic loop). Now, Paternain \cite{Paternain-09} classified all transparent connections on complex line bundles over Anosov surfaces and found that they are precisely given by $\left\{k^{\otimes m} ~|~ m \in \Z\right\}$. 
\end{proof}

\section{Geodesic Wilson loop operator. Isospectral connections.}

\label{section:spectral}

In this section, we prove Theorem \ref{theorem:main}. Let $(M^{n+1},g)$ be a smooth closed connected Riemannian manifold with Anosov geodesic flow and simple length spectrum. 
Consider $a_1, a_2 \in \A$, two connections with same spectrum $\mathbf{S}(a_1) = \mathbf{S}(a_2)$ (note that we do not require that the vector bundles are the same; they could have different ranks \emph{a priori}). In order to state the trace formula of Duistermaat-Guillemin \cite{Duistermaat-Guillemin-75, Guillemin-73} applied to the connection Laplacian, we need to introduce some notation. For $k = 1, 2$, we let:
\begin{itemize}
\item $(\lambda_j^{(k)})_{j \geq 0}$ be the eigenvalues of the connection Laplacian $\Delta_{a_k}$, counted with multiplicities, and sorted in increasing order,
\item $\mc{C}$, the set of free homotopy loops of $M$ (for Riemannian manifolds with Anosov geodesic flow, this set is in bijective correspondence with the set of closed geodesics, that is, there exists exactly one closed geodesic $\gamma_g(c)$ in each free homotopy class $c \in \mc{C}$, see \cite[Theorem 3.8.14]{Klingenberg-95}),
\item $\mc{C}^\sharp$, the set of primitive orbits (i.e. going several times around the same geodesic orbit is excluded),
\item $\sharp : \mc{C} \rightarrow \mc{C}^\sharp$, the operator mapping an orbit to its associated primitive orbit,
\item $\ell(\gamma)$, the length of the closed orbit $\gamma$, and $P_\gamma$, the linearized Poincar\'e return map of $\gamma$ (which may be identified with $d\varphi_{\ell(\gamma)}(z): E^u(z) \oplus E^s(z)$ at the point of interest $z \in (\gamma, \dot{\gamma})$),
\item $\mathrm{Hol}_{\nabla^{E_k}}(c)$, the holonomy on $E_k$ with respect to $\nabla^{E_k}$ around the unique closed geodesic $\gamma_g(c)$ in the class $c \in \mc{C}$.
\end{itemize}
The Duistermaat-Guillemin trace formula then reads (when the length spectrum is simple): 
\begin{equation}
\label{equation:trace-formula}
\lim_{t \to \ell(\gamma_g(c))} \left(t-\ell(\gamma_g(c))\right) \sum_{j \geq 0} e^{-i \sqrt{\lambda^{(k)}_j} t} = \dfrac{\ell(\gamma_g(c^\sharp)) \Tr\left(\mathrm{Hol}_{\nabla^{E_{k}}}(c)\right)}{2\pi |\det(\mathbbm{1}-P_{\gamma_g(c)})|^{1/2}} ,
\end{equation}

As a consequence, when the connection Laplacians are isospectral, the left-hand side of \eqref{equation:trace-formula} is the same for both connections $a_1$ and $a_2$, and one obtains that 
\[
\Tr\left(\mathrm{Hol}_{\nabla^{E_1}}(c)\right) = \Tr\left(\mathrm{Hol}_{\nabla^{E_2}}(c)\right), \qquad \forall c \in \mc{C}.
\]
We will simply restrict to primitive loops in $\mc{C}^\sharp$ and rewrite this last quantity using
\begin{equation}
\label{equation:wilson}
\WW : \A \to \ell^\infty(\mc{C}^\sharp), ~~~ \WW(a)(c) := \Tr(\mathrm{Hol}_{\nabla^{E}}(c)),
\end{equation}
called the \emph{geodesic Wilson loop operator} by analogy with field theory. Note that, since we work with orthogonal/unitary connections, the bound $\|\WW(a)\|_{\ell^\infty(\mc{C}^\sharp)} \leq \rk(E)$ holds. In other words, under the assumption that the length spectrum is simple, the trace formula \eqref{equation:trace-formula} shows that isospectrality of the connection Laplacians $\mathbf{S}(a_1) = \mathbf{S}(a_2)$ implies that the geodesic Wilson loop operators are the same, namely, $\WW(a_1)=\WW(a_2)$. Therefore, the question ``boils down'' to studying the geodesic Wilson loop operator and we formulate the following conjecture:

\begin{conjecture}
If $(M,g)$ is a closed odd-dimensional Riemannian manifold with Anosov geodesic flow, then the Wilson loop operator $\WW \colon \mathbf{A} \to \ell^\infty(\mc{C}^\sharp)$ is injective.
\end{conjecture}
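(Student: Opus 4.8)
\emph{Overall strategy.} The plan is to run the mechanism behind Theorem~\ref{theorem:main} in arbitrary rank. Write $n := \dim M - 1$, which is \emph{even} by hypothesis, so that each fibre of $SM \to M$ is a round sphere $\Ss^{n}$. Suppose $a_1 = ([E_1],[\nabla^{E_1}])$ and $a_2 = ([E_2],[\nabla^{E_2}])$ satisfy $\WW(a_1) = \WW(a_2)$, i.e.\ for every closed geodesic the holonomies of $\nabla^{E_1}$ and $\nabla^{E_2}$ have equal traces. The first step is to promote this identity to a flow-invariant fibrewise isometry over $SM$. Morally, equal traces force the two holonomy representations to have the same character, hence (the relevant transitivity group being compact) to be unitarily equivalent; by the non-Abelian Liv\v{s}ic machinery of \cite{Cekic-Lefeuvre-21-1} (see also Theorem~\ref{theorem:livsic} and Corollary~\ref{corollary:remark} applied to $\pi^*\Hom(E_1,E_2) \to SM$ with its induced connection) the resulting unitary intertwiner can be taken to be a smooth flow-invariant section
\[
p \in C^\infty\big(SM,\ \Hom(\pi^*E_1,\pi^*E_2)\big) \cap \ker \X,
\]
which is fibrewise unitary at every point since parallel transport preserves the unitary locus; in particular $\rk E_1 = \rk E_2 =: r$.

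\emph{Reduction to constancy of a polynomial map.} Since $p \in \ker \X$, Proposition~\ref{theorem:finite} shows that $p$ has finite Fourier degree, hence is fibrewise polynomial. Trivialising the pullback bundles over a fibre $S_{x_0}M$ and fixing isometries $(E_1)_{x_0} \simeq (E_2)_{x_0} \simeq \F^{r}$, the restriction $p_{x_0}$ becomes a polynomial map $\Ss^{n} \to \mathrm{U}(r)$. If one can show that $p_{x_0}$ is constant for \emph{every} $x_0$, then $p$ has zero Fourier degree and descends to a parallel isometry $E_1 \to E_2$ over $M$, i.e.\ $a_1$ and $a_2$ are gauge-equivalent, as desired. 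For $r \leq q_{\C}(n) = \tfrac{1}{2}q(n)$ and $M$ negatively curved this constancy is exactly Lemma~\ref{lemma:determine}, and the conclusion is the intermediate step in the proof of Theorem~\ref{theorem:main}. Thus the entire content of the conjecture is to dispense with the low-rank hypothesis --- and, as remarked after Proposition~\ref{theorem:finite}, also to extend the finiteness of the Fourier degree from negatively curved to arbitrary Anosov metrics, which is itself open.

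\emph{The main obstacle, and where new ideas must enter.} Without a rank bound, nonconstant polynomial maps $\Ss^{n} \to \mathrm{U}(r)$ genuinely exist, so this skeleton collapses and fresh input is needed. Three avenues seem plausible. \emph{(i)} Make Proposition~\ref{theorem:finite} effective: any universal bound on the Fourier degree of a flow-invariant section, combined with Yiu's classification~\cite{Yiu-94} of low-degree polynomial maps between spheres, would force $p_{x_0}$ to be constant --- but no such uniform bound is presently known. \emph{(ii)} Exploit that $p$ is not merely fibrewise polynomial but flow-invariant: the family $(p_x)_{x\in M}$ of polynomial maps $\Ss^{n}\to\mathrm{U}(r)$ is rigidly coupled by parallel transport along geodesics, and a monodromy argument around closed geodesics --- together with the twisted Pestov (Weitzenb\"ock) identity underlying Proposition~\ref{theorem:finite} --- may obstruct positive Fourier degree. \emph{(iii)} Topological obstructions: a flow-invariant $p$ gives an isomorphism $\pi^*E_1 \simeq \pi^*E_2$ over $SM$, and in \emph{odd} dimension the Gysin sequence of $\Ss^{n}\hookrightarrow SM \to M$ together with integrality of characteristic classes may rule out precisely the polynomial maps a nonconstant $p_{x_0}$ would have to produce; this is presumably why odd-dimensionality is indispensable, since in even dimension genuine counterexamples to injectivity exist (Proposition~\ref{proposition:2n}). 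I expect a successful proof to combine \emph{(ii)} and \emph{(iii)}; the sharpest difficulty is that the rigidity currently available for flow-invariant sections --- finiteness of the Fourier degree --- does not by itself distinguish odd from even $\dim M$, whereas the conjecture must.
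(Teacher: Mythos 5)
The statement you were asked about is stated in the paper as a \emph{conjecture}: the paper offers no proof of it, and indeed explicitly presents it as open (the authors only prove the low-rank case, Theorem \ref{theorem:wilson}, and remark after Proposition \ref{theorem:finite} that even the finiteness of the Fourier degree is open for general Anosov metrics). Your proposal is therefore not in competition with a proof in the paper, and, to your credit, you do not pretend to have one: what you write is an accurate reconstruction of the paper's own mechanism --- trace equality of holonomies along closed geodesics gives equality of characters of the Parry-monoid representations, hence an intertwining isometry; non-Abelian Liv\v{s}ic theory (Theorem \ref{theorem:livsic}, Corollary \ref{corollary:remark}) upgrades it to a flow-invariant fibrewise isometry $p$ on $SM$; Proposition \ref{theorem:finite} makes $p$ fibrewise polynomial; and only then does the low-rank hypothesis enter, via Lemma \ref{lemma:determine}, to force constancy. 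Your diagnosis of where this collapses in arbitrary rank (non-constant polynomial maps $\Ss^n \to \mathrm{U}(r)$ or $\mathrm{SO}(r)$ exist once $r$ is large) and of the second obstruction (Proposition \ref{theorem:finite} is only known in negative curvature, whereas the conjecture is stated for Anosov geodesic flows) matches the paper's own discussion, as does your observation that the even-dimensional counterexamples of Proposition \ref{proposition:2n} are what make the parity hypothesis essential.

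Since you were asked for a proof, the only substantive criticism is that none of your three proposed avenues (i)--(iii) is developed to the point of an argument: (i) would require a uniform bound on the Fourier degree of flow-invariant sections, which neither you nor the paper supplies; (ii) is a heuristic about coupling the fibrewise maps $(p_x)_{x \in M}$ by parallel transport, with no identified mechanism that distinguishes odd from even $\dim M$; and (iii) is speculative, and in fact the known even-dimensional counterexample $p$ of \eqref{equation:p} is itself flow-invariant and fibrewise the Hopf map, so any topological obstruction would have to be sharper than the mere existence of a flow-invariant isomorphism $\pi^*E_1 \simeq \pi^*E_2$. In short: your write-up is a faithful account of the state of the art and of the genuine gap, but it does not (and could not, with the tools in the paper) close that gap, which is precisely why the statement remains a conjecture.
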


As we shall see below in Proposition \ref{proposition:2n}, when $(M,g)$ is even-dimensional, the operator $\WW$ defined in \eqref{equation:wilson} is never injective. Note that in the specific case of negatively curved surfaces, one can take $a_1 := (M\times \C, d)$, the flat trivial line bundle, and $a_2 := (\kappa, \nabla^{\mathrm{LC}})$, the canonical line bundle $\kappa \to M$ equipped with the metric connection. It can be easily checked that $\WW(a_1)= (1, 1, \dotsc, 1, \dotsc) =\WW(a_2)$ (this is a transparent connection in the terminology of Paternain \cite{Paternain-09}, as seen earlier) but it is clear that $\kappa \not \simeq M \times\C$ since the first Chern class of the canonical line bundle is non-zero.

In the following, denote by $\star$ the Hodge star operator on a Riemannian manifold $(M^{2n}, g)$, and introduce $\Lambda^\pm := \{\alpha \in \Lambda^n T^*M \mid \star \alpha = \pm \alpha\}$ (if $n$ is even) and $\Lambda^\pm := \{\alpha \in \Lambda^n T^*_{\C} M \mid \star \alpha = \pm i \alpha \}$ (if $n$ is odd), the vector bundles of self-dual and anti self-dual $n$-forms equipped with the natural Levi-Civita connections $\nabla^\pm$, respectively. Set $a_+ := (\Lambda^+, \nabla^+)$ and $a_- := (\Lambda^-, \nabla^-)$.

\begin{proposition}
\label{proposition:2n}
	Let $(M^{2n}, g)$ be a negatively curved closed Riemannian manifold. Then, the pairs $a_+$ and $a_-$ are trace-equivalent along closed geodesics, that is,
	\[
	\WW(a_+) = \WW(a_-),
	\]
	but $a_+ \neq a_-$.
	\end{proposition}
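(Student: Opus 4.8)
The plan is to establish the two assertions separately: the inequality $a_+ \neq a_-$ is topological, while the trace-equivalence $\WW(a_+) = \WW(a_-)$ is representation-theoretic and follows from a pointwise identity of holonomies along closed geodesics.

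For the trace-equivalence, I would fix a closed geodesic $\gamma$ of length $\ell$ based at $(x,v) \in SM$, and consider the parallel transport $P_\gamma \in \mathrm{SO}(2n)$ (resp.\ $\mathrm{U}(2n)$) of the Levi-Civita connection on $\Lambda^n T^*M$ around $\gamma$; this is the image of the holonomy $h_\gamma \in \mathrm{SO}(2n)$ acting on $T_xM \cong \R^{2n}$ under the $n$-th exterior power representation $\Lambda^n \colon \mathrm{SO}(2n) \to \mathrm{GL}(\Lambda^n \R^{2n})$. The spaces $\Lambda^\pm$ are, by construction, the $\pm$-eigenspaces (resp.\ $\pm i$-eigenspaces) of the Hodge star, which is $\mathrm{SO}(2n)$-equivariant, so $\Lambda^n \R^{2n} = \Lambda^+ \oplus \Lambda^-$ as $\mathrm{SO}(2n)$-representations and $\mathrm{Hol}_{\nabla^\pm}(\gamma) = \Lambda^n(h_\gamma)|_{\Lambda^\pm}$. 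Thus the claim $\WW(a_+) = \WW(a_-)$ amounts to the purely linear-algebraic statement that $\Tr\big(\Lambda^n(h)|_{\Lambda^+}\big) = \Tr\big(\Lambda^n(h)|_{\Lambda^-}\big)$ for every $h \in \mathrm{SO}(2n)$. The cleanest way to see this is to exhibit an $\mathrm{SO}(2n)$-equivariant isomorphism $\Lambda^+ \cong \Lambda^-$: both are isomorphic to the same irreducible $\mathrm{SO}(2n)$-representation after twisting, or more concretely, composing with an orientation-reversing orthogonal map (an element of $\Oo(2n) \setminus \mathrm{SO}(2n)$, which swaps $\star$ with $-\star$) conjugates $\Lambda^+$ to $\Lambda^-$ while being trivial on the $\mathrm{SO}(2n)$-character; since traces are conjugation-invariant this forces equality. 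Alternatively one can compute both traces directly in terms of the eigenvalues $e^{\pm i\theta_1}, \dots, e^{\pm i\theta_n}$ of $h$ and check the two symmetric expressions coincide — this is the routine calculation I would not grind through.

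For $a_+ \neq a_-$, I would argue that the vector bundles $\Lambda^+$ and $\Lambda^-$ are already not isomorphic (which is stronger than what is needed, and immediately gives $a_+ \neq a_-$ as points of $\mathbf{A}^\F$). This is detected by a characteristic-class computation: for instance, the Euler class or a Pontryagin/Chern class of $\Lambda^\pm$ distinguishes the two — concretely, in the case most relevant here, $\dim M = 4$ (so $n = 2$), $\Lambda^\pm$ are the bundles of self-dual and anti-self-dual $2$-forms and one has $e(\Lambda^+) - e(\Lambda^-) \ne 0$ in general (related to the signature), and similar invariants work in higher dimension. I would phrase this via the splitting principle and the fact that the $\mathrm{SO}(2n)$-equivariant isomorphism above does \emph{not} lift to an $\Oo(2n)$-equivariant one, so the bundles (associated to the frame bundle with structure group $\mathrm{SO}$, using the orientation of $M$) genuinely differ.

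The main obstacle is the clean identification of $\mathrm{Hol}_{\nabla^\pm}(\gamma)$ with the restriction of $\Lambda^n(h_\gamma)$ to the $\pm$-(eigen)space of $\star$, together with making the ``orientation-reversal swaps $\Lambda^+ \leftrightarrow \Lambda^-$ but preserves $\mathrm{SO}$-characters'' argument fully rigorous — in the odd-$n$ case one must be careful with the factor of $i$ in the definition of $\Lambda^\pm$ and work with the complexified bundles, and one should check the equivariant isomorphism intertwines the connections $\nabla^\pm$ (which it does, as all maps in sight are induced by $\Oo(2n)$ acting on $\R^{2n}$ and hence parallel). Once that is in place, the trace identity is immediate from conjugation-invariance of the trace, and the bundle-level non-isomorphism is a standard characteristic-class obstruction.
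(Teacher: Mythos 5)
Your reduction of the trace-equivalence to the statement that $\Tr\big(\Lambda^n(h)|_{\Lambda^+}\big) = \Tr\big(\Lambda^n(h)|_{\Lambda^-}\big)$ for \emph{every} $h \in \mathrm{SO}(2n)$ is a genuine gap: that statement is false. The spaces $\Lambda^+$ and $\Lambda^-$ are irreducible and mutually non-isomorphic as $\mathrm{SO}(2n)$-representations, so their characters are distinct functions on $\mathrm{SO}(2n)$; concretely, for $n=2$ and $h$ a rotation with angles $(\theta_1,\theta_2)$ the two traces are $1+2\cos(\theta_1+\theta_2)$ and $1+2\cos(\theta_1-\theta_2)$, which differ already at $\theta_1=\theta_2=\pi/2$. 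Your orientation-reversal argument only yields $\Tr\big(\Lambda^n(h)|_{\Lambda^-}\big)=\Tr\big(\Lambda^n(ghg^{-1})|_{\Lambda^+}\big)$ for $g\in \mathrm{O}(2n)\setminus\mathrm{SO}(2n)$, and the character of $\Lambda^+$ is \emph{not} invariant under this outer conjugation — such invariance would be equivalent to $\Lambda^+\cong\Lambda^-$ as $\mathrm{SO}(2n)$-modules, which fails. There is also an internal inconsistency: an $\mathrm{SO}(2n)$-equivariant isometry $\Lambda^+\cong\Lambda^-$, applied fiberwise through the oriented orthonormal frame bundle, would give a parallel bundle isometry over $M$, i.e.\ exactly the gauge-equivalence $a_+=a_-$ that you disprove in the second half. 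The fact that rescues the statement — and the crux of the paper's proof — is that the holonomy of the Levi-Civita connection around a \emph{closed geodesic} fixes the tangent vector $v=\dot\gamma(0)$, hence lies in the stabilizer $\mathrm{SO}(2n-1)\subset\mathrm{SO}(2n)$ of $v$, and the restrictions of $\Lambda^\pm$ to $\mathrm{SO}(2n-1)$ \emph{are} isomorphic. The intertwiner is therefore necessarily $v$-dependent, which is why the paper constructs a flow-invariant isometry $p\in C^\infty(SM,\Hom(\pi^*\Lambda^+,\pi^*\Lambda^-))$, $p(x,v)\alpha = 2\Pi^-(x)\big(v\wedge\star(\alpha\wedge v)\big)$, over the unit tangent bundle rather than over $M$; your proposal is missing this step.

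The second half is also incomplete. A characteristic-class proof that $[\Lambda^+]\neq[\Lambda^-]$ is only available (and only used in the paper) when $\dim M=4$, where $p_1(\Lambda^+)-p_1(\Lambda^-)=4\,\e(TM)\neq 0$ by the generalized Gauss--Bonnet theorem; the invariant you name there is problematic, since $\Lambda^\pm$ have odd rank $3$ (so their Euler classes are torsion) and signature-type differences vanish, e.g.\ for real hyperbolic $4$-manifolds. More importantly, in higher dimensions the paper expects $\Lambda^+\simeq\Lambda^-$ as bundles, so no characteristic class can separate them, and one must rule out gauge-equivalence of the \emph{connections} directly. The paper does this by contradiction: $a_+=a_-$ would force the holonomy representations of homotopically trivial loops on $\Lambda^\pm$ to be conjugate, and by Berger's classification of restricted holonomy groups in negative curvature ($\mathrm{SO}(2n)$, $\mathrm{U}(n)$, $\mathrm{Sp}(n)\cdot\mathrm{Sp}(1)$, $\mathrm{Spin}(9)$) one checks case by case that $\Lambda^+$ and $\Lambda^-$ remain non-isomorphic as representations of $\mathrm{Hol}_0(M)$. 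Your proposal contains no argument covering $\dim M>4$, nor the reduced-holonomy cases, so this half needs a different idea as well.
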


\begin{proof}
	We start by proving that $a_+$ and $a_-$ are trace-equivalent along closed geodesics. We argue for $n$ even: the proof for $n$ odd is similar up to minor modifications.

Observe that the equality $\WW(\Lambda^+, \nabla^+) = \WW(\Lambda^-, \nabla^-)$ is implied by the existence of an orthogonal map 
	\[
	p \in C^\infty(SM, \Hom(\pi^*\Lambda^+, \pi^*\Lambda^-))
	\]
	that intertwines the operators $\pi^*\nabla^+_X$ and $\pi^* \nabla^-_X$, i.e.
\begin{equation}
\label{eq:equivalence}
	\forall u \in C^\infty(SM, \pi^*\Lambda^+), \quad \pi^*\nabla^+_Xu = p^{-1}\pi^*\nabla^-_X (p u),
\end{equation}
so it suffices to exhibit this map. (Equivalently, \eqref{eq:equivalence} says that $p$ is invariant with respect to the tensor product connection induced by $\pi^*\nabla^\pm$ on $\Hom(\pi^*\Lambda^+, \pi^*\Lambda^-)$ in the $X$-direction.) Indeed, let $\gamma$ be an arbitrary geodesic segment $\gamma \subset M$, tangent to $v_1, v_2$ at the points $x_1, x_2 \in M$, and let $P^\pm: \Lambda^\pm(x_1) \to \Lambda^\pm(x_2)$ be the parallel transport maps along $\gamma$ with respect to $\nabla^\pm$; then \eqref{eq:equivalence} immediately shows that $P^- = p(x_2, v_2) P^+ p^{-1}(x_1, v_1)$. In particular, taking $\gamma$ to be a closed orbit (so $(x_1, v_1) = (x_2, v_2)$) shows that $P^+$ and $P^-$ have equal traces and so $\WW(\Lambda^+, \nabla^+) = \WW(\Lambda^-, \nabla^-)$.

Introduce the commuting orthogonal projections $\Pi^\pm := \frac{\id \pm \star}{2}$ onto $\Lambda^\pm$, i.e. $(\Pi^\pm)^2 = \Pi^\pm$, $\Pi^+ \Pi^- = \Pi^- \Pi^+ = 0$, and $\Lambda^\pm = \Pi^\pm \Lambda^n$. We claim that the map $p$ defined as follows:
	\begin{equation}
	\label{equation:p}
	p(x, v) \alpha := 2\Pi^-(x)\big(v \wedge \star(\alpha \wedge v)\big), \quad \alpha \in \Lambda^+_x,
	\end{equation}
	is flow-invariant and that for every $(x,v) \in SM$, $p(x,v) \colon \Lambda^+_x \to \Lambda^-_x$ is an isometry. (Note that in \eqref{equation:p}, we have implicitly identified $v$ with a $1$-form using the metric.) 

In order to check the flow-invariance \eqref{eq:equivalence}, it suffices to take a geodesic segment $\gamma \subset M$ tangent to $v_1, v_2$ at the points $x_1, x_2 \in M$, respectively, and observe that the parallel transport $P_\gamma$ along $\gamma$ satisfies for an arbitrary $\alpha \in \Lambda^+_{x_1}$:
	\begin{align*}
		p(x_2, v_2) P_\gamma \alpha &= 2\Pi^-(x_2) \big(v_2 \wedge \star(P_\gamma \alpha \wedge v_2)\big) \\
		& = 2P_\gamma \Pi^-(x_1) \big(v_1 \wedge \star(\alpha \wedge v_1)\big) =  P_\gamma p(x_1, v_1) \alpha.
	\end{align*}
	Here in the second equality we used that $P_\gamma$ commutes with $\star$, $\Pi^-$, that it distributes over the wedge product, and that it satisfies $P_\gamma v_1 = v_2$. This completes the proof of the claim.

	The isometry property follows from the following observation: take an orthonormal basis $(\e_i)_{i = 1}^{2n}$ of $T_xM$ such that $v = \e_1$. For an increasing $(n - 1)$-tuple $I = (i_1, i_2, \dotsc, i_{n - 1}) \subset \left\{2, \dotsc, 2n\right\}$, define $\alpha_{I}^\pm := \sqrt{2} \Pi^\pm \e_1 \wedge \e_I$; it is straightforward to check that $(\alpha_I^\pm)_I$ is an orthonormal basis of $\Lambda^\pm_x$. Moreover, a simple computation shows that 
	\[p(x, \e_1) \alpha_I^+ = \sqrt{2} \Pi^-(x) \big(\e_1 \wedge \star \big(\star(\e_1 \wedge \e_I) \wedge \e_1\big)\big) = \sqrt{2} \Pi^-(x) \big(\e_1 \wedge \e_I\big) = \alpha_I^-,\]
	where in the second equality we used $\star \big(\star(\e_1 \wedge \e_I) \wedge \e_1\big) = \e_I$, so $p$ is an isometry. This completes the proof that $\mathbf{W}(a_+) = \mathbf{W}(a_-)$. \\
	
	We now show that $a_+ \neq a_-$. Actually, when $\dim M = 4$, $[\Lambda^+] \neq [\Lambda^-]$, that is, the bundles are not isomorphic: this follows from the fact that their first Pontryagin classes $p^\pm \in H^4(M, \mathbb{Z})$ satisfy the formula $p^+ - p^- = 4\e$, where $0 \neq \e \in H^4(M, \mathbb{Z})$ is the Euler class \cite[Chapter 6, Proposition 5.4(2)]{Walschap-04}, which is strictly positive by the generalised Gauss-Bonnet theorem, see \cite[Theorem 5]{Chern-55}. 
	
However, we can always show that $a_+ \neq a_-$, that is, the connections are not gauge-equivalent. Indeed, the equality $a_+ = a_-$ would entail the existence of an isometry $p \in C^\infty(M,\mathrm{Hom}(\Lambda^+,\Lambda^-))$ such that $p^*\nabla^-(\bullet) = \nabla^+(\bullet) = p^{-1}\nabla^- (p\bullet)$. In particular, restricting to an arbitrary point $x_0 \in M$, we would obtain that the holonomy representations $\rho_\pm$ of the loop space $\Omega M$ (based at $x_0$) in $\mathrm{SO}(\Lambda^\pm_{x_0})$ are conjugate, that is $\rho_+(\gamma)= p^{-1}(x_0) \rho_-(\gamma) p(x_0)$ for every loop $\gamma \in \Omega M$ based at $x_0$. In particular, this holds for all $\gamma \in (\Omega M)_0$, that is, $\gamma \in \Omega M$ such that $[\gamma]=0 \in \pi_1(M,x_0)$. In order to conclude, we then argue distinctly for each possible holonomy groups arising in negative curvature (see Berger's classification \cite{Berger-53} -- Berger's classification applies to simply connected manifolds, so we need to pass to the universal cover of $(M,g)$, hence the restriction to the homotopically trivial loops $(\Omega M)_0$): 

\begin{enumerate}
\item \textbf{Generic case}, $\mathrm{Hol}_0(M)=\mathrm{SO}(2n)$: in this case, the morphisms $\rho_\pm$ factor surjectively through $\mathrm{SO}(T_{x_0}M) \simeq \mathrm{SO}(2n)$, which is impossible since $\Lambda^\pm$ are irreducible non-isomorphic representations of $\mathrm{SO}(2n)$, see \cite[Chapter 6, Section 5.5]{Brocker-Dieck-85}:
\[
\xymatrix{
     & \mathrm{SO}(\Lambda^+_{x_0})  \\
    (\Omega M)_0 \ar[ur]^{\rho_+} \ar@{->>}[r] \ar[dr]_{\rho_-} & \mathrm{SO}(2n) \ar[u] \ar[d] \\
    & \mathrm{SO}(\Lambda^-_{x_0}).
  }
\]
\item \textbf{Kähler case}, $\mathrm{Hol}_0(M)=\mathrm{U}(n) < \mathrm{SO}(2n)$: as in (1), it suffices to show that the restricted representations of $\mathrm{U}(n)$ on $\Lambda^\pm$ are not isomorphic and we will actually show that the restricted representations to the center $\mathrm{U}(1)$ of $\mathrm{U}(n)$ on $\Lambda^\pm$ are not isomorphic. Let $\rho \colon \mathrm{U}(1) \to \mathrm{SO}(2)$ be the standard representation; the standard representation $\mathrm{U}(1) \to \mathrm{End}(\R^{2n})$ is given by $\rho \oplus \dotsb \oplus \rho$ ($n$ times). Recall that $R^+(G,\R)$, the real representation semiring of a group $G$, is equipped with a (semi)ring homomorphism $\lambda_t \colon R^+(G,\R) \mapsto R(G,\R)[[t]]$ given by (see \cite[Chapter 7]{Brocker-Dieck-85})
\[
\lambda_t(\rho) := 1 + \rho t + (\Lambda^2 \rho) t^2 + \dotsb
\]
such that $\lambda_t(\rho_1\oplus\rho_2)=\lambda_t(\rho_1) \cdot \lambda_t(\rho_2)$. Hence, in our case, for $\rho : \mathrm{U}(1) \to \mathrm{SO}(2)$ as above, we get
\begin{equation}
\label{equation:lambdat}
\begin{split}
\hspace{1cm} \lambda_t(\rho \oplus \dotsb \oplus \rho) & = \lambda_t(\rho)^n \\
& = (1+ \rho t + t^2)^n = 1 + \dotsb + c_n(\rho) t^n + \dotsb + t^{2n},
\end{split}
\end{equation}
where $c_n(\rho)$ is a polynomial in the representation $\rho$ such that
\begin{equation}
\label{equation:cn}
c_n(\rho) = \rho^{\otimes n} + \text{lower order terms}.
\end{equation}
The representation of $\mathrm{U}(1)$ on $\Lambda^n \R^{2n}$ is precisely given by the coefficient $c_n(\rho)$ in the expansion \eqref{equation:lambdat} and \eqref{equation:cn} shows that this representation admits a weight $n$ of multiplicity $1$. As a consequence, when $n$ is even, the two real representations $\Lambda^\pm$ cannot be isomorphic.

 When $n$ is odd, this argument needs to be slightly adapted since we need to complexify $\Lambda^n \R^{2n}$ in order to obtain the decomposition $\Lambda^n \R^{2n} \otimes_{\R} \C = \Lambda^+ \oplus \Lambda^-$. Hence, complexifying \eqref{equation:cn}, we see that $\Lambda^n \R^{2n} \otimes_{\R} \C$ is given by $\rho^{\otimes n} \otimes_{\R} \C + \text{l.o.t.} = \rho_{\C}^{\otimes n} + \text{l.o.t.}$, where $\rho_{\C}$ stands for the complexification of $\rho$. Note that $\rho : \mathrm{U}(1) \to \mathrm{SO}(2)$ can already be seen as a $1$-dimensional complex representation which we denote by $\eta$ (in order to avoid confusion) and thus $\rho_{\C} = \eta \oplus \overline{\eta}$ (as complex representations), which yields:
 \[
 \rho^{\otimes n} \otimes_{\R} \C = (\eta \oplus \overline{\eta})^{\otimes n} = \eta^{\otimes n} + \text{terms involving } \overline{\eta}.
 \]
 (Here $\eta^{\otimes n}$ is the $1$-dimensional complex representation associated to the character $z \mapsto z^n$.) Since $\eta^{\otimes n}$ appears with multiplicity $1$, it implies that $\Lambda^+ \neq \Lambda^-$ in this case too.

\item \textbf{Quaternion Kähler case}, $\mathrm{Hol}_0(M) = \mathrm{Sp}(n).\mathrm{Sp}(1) < \mathrm{SO}(4n)$: same argument as in (2) (case $n$ even) by restricting to $\mathrm{U}(1)$.

\item \textbf{Octonionic (or Cayley) hyperbolic plane}, $\mathrm{Hol}_0(M) = \mathrm{Spin}(9) < \mathrm{SO}(16)$: the faithful representation $\mathrm{Spin}(9) \to \mathrm{SO}(16)$ is given by the spinor representation $\Delta_9$ but by \cite[Theorem 1]{Friedrich-01}, it is known that the induced representations $\Lambda^p \Delta_9$ are multiplicity-free (this can also be checked using the LiE program), so $\rho_+ \neq \rho_-$.
\end{enumerate}
This completes the proof.
	\end{proof}

\begin{remark}\rm
	Set $\Lambda^{\mathrm{odd}/\mathrm{even}} := \Lambda^{\mathrm{odd}/\mathrm{even}} T^*M$, the bundle of odd/even differential forms, and consider $a_{\mathrm{odd}/\mathrm{even}} := (\Lambda^{\mathrm{odd}/\mathrm{even}}, \nabla^{\mathrm{LC}}) \in \mathbf{A}^{\mathbb{R}}$, where $\nabla^{\mathrm{LC}}$ is the Levi-Civita connection. Similarly to Proposition \ref{proposition:2n} it is straightforward to see $\WW(a_{\mathrm{odd}}) = \WW(a_{\mathrm{even}})$. If $n = 2$, then $\Lambda^{\mathrm{odd}} \not \simeq \Lambda^{\mathrm{even}}$ since $\Lambda^{\mathrm{odd}} = \Lambda^1 T^*M$ and $\Lambda^{\mathrm{even}} \simeq M\times \mathbb{R}^2$ have distinct Euler classes; if $n > 2$, then $\Lambda^{\mathrm{odd}/\mathrm{even}}$ are in the `stable regime' (they have rank $2^{n - 1} > n$) so they are isomorphic if and only if stably isomorphic. Calculations indicate that $\Lambda^{\mathrm{odd}/\mathrm{even}}$ are equal in K-theory and are hence isomorphic for $n > 2$. However, by a similar argument to Proposition \ref{proposition:2n}, we expect that $a_{\mathrm{odd}} \neq a_{\mathrm{even}}$. Also, guided by the case of $\Lambda^{\mathrm{odd}/\mathrm{even}}$, we may expect that $\Lambda^+ \simeq \Lambda^-$ in higher dimensions.
\end{remark}

The $4$-dimensional case is enlightening and we detail here some computations. The two trace-equivalent pairs $(\Lambda^+, \nabla^+), (\Lambda^-, \nabla^-)$ provide a non-trivial example of a dynamically invariant fiberwise polynomial map over $\Ss^3$ which turns out to be the usual Hopf fibration $\Ss^3 \to \Ss^2$.

\begin{example}
Assume that $\dim M = 4$. When restricted to the sphere $S_xM$ over some $x \in M$ and identifying $\Lambda^\pm_x \simeq \mathbb{R}^3$, the map $p$ defined by \eqref{equation:p} provides a non-constant polynomial mapping $p: \mathbb{S}^3 \to \mathrm{SO}(3)$ which, when restricted to a column, then gives a quadratic polynomial map $\mathbb{S}^3 \to \mathbb{S}^2$. We claim that, in local coordinates, it is given by the standard Hopf fibration $\Ss^3 \to \Ss^2$ (as described in \S\ref{section:polynomial} for instance). Indeed, taking $(\e_1,\e_2,\e_3,\e_4)$, an orthonormal basis of $T_xM$, we can use the following orthonormal basis of $\Lambda^\pm_x$ (we freely identify vectors and $1$-forms via the metric):
	\begin{equation}
	\label{equation:basis}
		\alpha_1^\pm :=  \frac{\e_1 \wedge \e_2 \pm \e_3 \wedge \e_4}{\sqrt{2}}, \alpha_2^\pm :=  \frac{\e_1 \wedge \e_3 \mp \e_2 \wedge \e_4}{\sqrt{2}},  \alpha_3^\pm := \frac{\e_1 \wedge \e_4 \pm \e_2 \wedge \e_3}{\sqrt{2}}.
	\end{equation}
	A quick computation gives an expression for $p(v)\left(\alpha_1^+\right)$ in the basis $(\alpha_1^-,\alpha_2^-,\alpha_3^-)$:
	\[
	\begin{split}
	&p(v)\left( \tfrac{\e_1 \wedge \e_2 + \e_3 \wedge \e_4}{\sqrt{2}} \right) \\
	& = 2\Pi^-\left( v \wedge \star \left( \tfrac{\e_1 \wedge \e_2 + \e_3 \wedge \e_4}{\sqrt{2}}  \wedge v \right) \right) \\
	& = \sqrt{2}\Pi^-\left( (v_1^2+v_2^2) \e_1 \wedge \e_2+(v_2v_3-v_1v_4) \e_1 \wedge \e_3 + (v_1v_3+v_2v_4)\e_1 \wedge \e_4 \right. \\
	& \left. -(v_1v_3+v_2v_4)\e_2 \wedge \e_3 + (v_2v_3-v_1v_4)\e_2 \wedge \e_4 + (v_3^2+v_4^2)\e_3 \wedge \e_4 \right) \\
	& = \left(v_1^2+v_2^2-(v_3^2+v_4^2) \right) \alpha_1^- + 2\left(v_2v_3-v_1v_4\right) \alpha_2^- + 2\left(v_1v_3+v_2v_4 \right) \alpha_3^-.
	\end{split}
	\]
	Writing $z_0:=v_1+iv_2 \in \C,z_1=v_3+iv_4 \in \C$, we thus get 
	\[
	p(v)\left( \tfrac{\e_1 \wedge \e_2 + \e_3 \wedge \e_4}{\sqrt{2}} \right) = (|z_0|^2-|z_1|^2, 2z_0z_1^*),
	\]
	which is the usual expression for the Hopf fibration $\Ss^3 \to \Ss^2$.

\end{example}

We now study the injectivity of the geodesic Wilson loop operator $\WW$ on low-rank vector bundles and prove the following:

\begin{theorem}
\label{theorem:wilson}
Let $n \in \mathbb{Z}_{\geq 2}$ and let $(M^{n+1}, g)$ be a negatively curved closed Riemannian manifold. Then
\[
\WW \colon \mathbf{A}^{\F}_{\leq q_{\F}(n)} \to \ell^\infty(\mc{C}^\sharp)
\]
is injective.
\end{theorem}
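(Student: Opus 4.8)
The plan is to reduce the injectivity of $\WW$ on $\mathbf A^\F_{\le q_\F(n)}$ to the non-existence, for $n\ge 2$ and $r\le q_\F(n)$, of non-constant polynomial maps $\Ss^n\to\mathrm{SO}(r)$ (real case) or $\Ss^n\to\mathrm U(r)$ (complex case), which is part of Lemma \ref{lemma:determine}. So fix $a_1=([E_1],[\nabla^{E_1}])$ and $a_2=([E_2],[\nabla^{E_2}])$ in $\mathbf A^\F_{\le q_\F(n)}$, of ranks $r_1,r_2$, and assume $\WW(a_1)=\WW(a_2)$. The first step is to produce out of this a flow-invariant section $p\in C^\infty\big(SM,\Hom(\pi^*E_1,\pi^*E_2)\big)\cap\ker\X$ that is a fibrewise linear isometry; this is the non-Abelian Liv\v sic input already available from \cite{Cekic-Lefeuvre-21-1}, and I would carry it out as follows. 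Pull back $E_1\oplus E_2$ (with the direct-sum connection) to $SM$ and let $H$ be the transitivity group of the associated frame flow; it sits block-diagonally in $\mathrm{SO}(r_1)\times\mathrm{SO}(r_2)$ (resp.\ $\mathrm U(r_1)\times\mathrm U(r_2)$) with component representations $\rho_1$ on $\F^{r_1}$ and $\rho_2$ on $\F^{r_2}$. By the closing lemma for the Anosov geodesic flow, for each $\gamma\in\mathbf G$ the number $\Tr\rho_i(\gamma)$ is a limit of traces $\Tr\big(\mathrm{Hol}_{\nabla^{E_i}}(c_j)\big)$ along primitive closed geodesics $c_j$ shadowing the homoclinic chain $\gamma$ --- the base-point ambiguity \eqref{equation:co} being invisible to the trace, and the same orbits $c_j$ working simultaneously for $i=1,2$ --- so $\WW(a_1)=\WW(a_2)$ forces $\Tr\rho_1=\Tr\rho_2$ on $\mathbf G$, hence on the compact group $H$. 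Since orthogonal (resp.\ unitary) representations with equal characters are isometrically equivalent, this gives $r_1=r_2=:r$ and a linear isometry in $\big(\Hom(\F^r,\F^r)\big)^H$; the non-Abelian Liv\v sic theorem (Theorem \ref{theorem:livsic} and Corollary \ref{corollary:remark}) then upgrades this isometry to the desired $p$, which is everywhere a fibrewise isometry because $p^*p$ is a flow-invariant section of $\pi^*\End(E_1)$ equal to $\mathbbm{1}$ at $z_\star$, hence identically $\mathbbm{1}$ by injectivity of the evaluation map.

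With $p$ in hand, the rest follows the outline of \S\ref{section:intro}. By Proposition \ref{theorem:finite} applied to the pullback bundle $\pi^*\Hom(E_1,E_2)$, the invariant section $p$ has finite Fourier degree, hence is fibrewise polynomial. Fixing $x_0\in M$ and identifying $S_{x_0}M\simeq\Ss^n$, and using that $\pi^*\Hom(E_1,E_2)$ is trivial over this fibre, the restriction $p|_{S_{x_0}M}$ is a polynomial map from $\Ss^n$ into the linear isometries $E_{1,x_0}\to E_{2,x_0}$, which after a choice of orthonormal frames becomes a polynomial map $\Ss^n\to\mathrm O(r)$ (real case) or $\Ss^n\to\mathrm U(r)$ (complex case); as $\Ss^n$ is connected, in the real case we may compose with a fixed element of $\mathrm O(r)$ and assume the image lies in $\mathrm{SO}(r)$. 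Now the low-rank hypothesis bites: $r\le q_\R(n)=q(n)<1+q(n)=q_{\mathrm{SO}}(n)$ in the real case and $r\le q_\C(n)=\tfrac12 q(n)<1+\tfrac12 q(n)=q_{\mathrm U}(n)$ in the complex case, so by Lemma \ref{lemma:determine} the map $p|_{S_{x_0}M}$ must be constant. As $x_0$ is arbitrary, $p$ has zero Fourier degree.

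By the remark following Proposition \ref{theorem:finite}, $p=\pi^*p_0$ for a section $p_0\in C^\infty(M,\Hom(E_1,E_2))$ that is parallel for the induced connection, and the fibrewise isometry property descends to $p_0$, so $p_0(x)\colon E_{1,x}\to E_{2,x}$ is an isometry for every $x$; a parallel bundle isometry intertwines the connections, $\nabla^{E_2}\circ p_0=p_0\circ\nabla^{E_1}$, hence $p_0$ is a gauge equivalence and $a_1=a_2$ in $\mathbf A^\F$. I expect the main obstacle to lie entirely in the first step --- manufacturing a flow-invariant intertwiner from the very weak datum of equal \emph{traces} of holonomies along primitive closed geodesics --- which leans on the closing lemma for Anosov flows combined with the non-Abelian Liv\v sic machinery of \cite{Cekic-Lefeuvre-21-1}; by contrast, the genuinely new ingredient, the collapse of the intertwiner in low rank, is soft once Lemma \ref{lemma:determine} is available, the only point needing care being the bookkeeping that, everything being a pullback bundle, the fibrewise restriction of $p$ truly is a map into $\mathrm{SO}(r)$ or $\mathrm U(r)$ rather than a section of a nontrivial bundle. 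Finally, combined with the Duistermaat--Guillemin trace formula \eqref{equation:trace-formula}, this statement yields Theorem \ref{theorem:main} at once when the length spectrum is simple.
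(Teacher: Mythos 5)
Your proposal is correct and follows essentially the same route as the paper: equality of Wilson loops gives equal characters of the Parry-monoid representations via shadowing, hence an isometric intertwiner $p_\star$ upgraded by the non-Abelian Liv\v{s}ic theorem to a flow-invariant fibrewise isometry $p$, which by finiteness of Fourier degree and Lemma \ref{lemma:determine} is fibrewise constant in low rank and descends to a parallel gauge equivalence. Your minor variations (handling $\mathrm{U}(r)$ directly instead of reducing to $\mathrm{SU}(r)$, and the $\mathrm{O}(r)$-versus-$\mathrm{SO}(r)$ bookkeeping on the fibre) are harmless and consistent with Lemma \ref{lemma:determine}.
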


Recall here that $q_{\F}(n)$ was introduced in \eqref{equation:nb}. Note that we do not make any assumption on the parity of $n$ in Theorem \ref{theorem:wilson}. As a corollary, we immediately obtain the same result for the spectrum map, namely, Theorem \ref{theorem:main}. Finally, observe that Proposition \ref{proposition:2n} does not contradict Theorem \ref{theorem:wilson} as the low-rank condition is not satisfied there.

\begin{proof}
 Similarly to \S\ref{section:ergodicity}, given $a = ([E],[\nabla^E]) \in \A^{\F}$, one can consider the lift $([\E],[\nabla^{\E}]) := \pi^*([E],[\nabla^E])$ to $SM$ and the induced representation of Parry's free monoid $\rho : \mathbf{G} \to \mathrm{End}(\F^r)$ given by parallel transport of sections along homoclinic orbits with respect to $\nabla^{\E}$. Recall that the character of this representation is defined as $\chi_\rho := \Tr(\rho(\bullet))$. The key point is that equality of the Wilson loop operator $\WW(a_1) = \WW(a_2)$, where $a_i = ([E_i],[\nabla^{E_i}])$ and $r_i := \mathrm{rank}(E_i)$, implies that the representations \eqref{equation:parry-representation} of Parry's free monoid $\rho_i : \mathbf{G} \to \mathrm{SO}(r_i) \simeq \mathrm{SO}\big((\E_i)_{z_\star}\big)$ (or $\mathrm{U}(r_i) \simeq \mathrm{U}\big((\E_i)_{z_\star}\big)$ in the complex case) have the same character for $i = 1, 2$, see \cite[Proposition 3.18]{Cekic-Lefeuvre-21-1}. This is due to the fact that periodic orbits are dense and thus by the shadowing lemma for hyperbolic flows \cite[Chapter 5]{Fisher-Hasselblatt-19}, one can approximate homoclinic orbits by periodic ones. Hence, by standard algebra, these representations are isomorphic, see \cite[Chapter XVII, Corollary 3.8]{Lang-02}. 

In other words, there is an isometry $p_\star \colon {\E_1}_{z_\star} \to {\E_2}_{z_\star}$ such that
\begin{equation}
\label{equation:conj}
\rho_1 = p_\star^{-1} \rho_2 p_\star.
\end{equation}
This implies that the two bundles $\E_1$ and $\E_2$ have the same rank $r = r_1 = r_2$ and that both transitivity groups $H_1$ and $H_2$ are the same (up to conjugacy within $\mathrm{SO}(r)$ or $\mathrm{U}(r)$).

Apply the non-Abelian Liv\v sic theorem (Theorem \ref{theorem:livsic}) with the bundle $\mathrm{Hom}(\E_1, \E_2) = \E_2 \otimes \E_1^*$ equipped with $\nabla^{\mathrm{Hom}(\E_1, \E_2)} := \nabla^{\E_2} \otimes \nabla^{\E_1^*}$: indeed, using Corollary \ref{corollary:remark}, the induced representation
\[
\rho_{\mathrm{Hom}(\E_1, \E_2)} \colon \mathbf{G} \to \End\big(\mathrm{Hom}({\E_1}_{z_\star},{\E_2}_{z_\star}) \big)
\]
given by
\[
\rho_{\mathrm{Hom}(\E_1, \E_2)}(g) p := \rho_2(g)^{-1} p \rho_1(g),
\]
admits $p_\star$ as a fixed point by \eqref{equation:conj}. Hence, by Theorem \ref{theorem:livsic}, one obtains a flow-invariant isometry $p \in C^\infty(SM, \mathrm{Hom}(\E_1, \E_2))$ such that $p(z_\star)=p_\star$, that is, $\X p=0$ where $\X = \nabla^{\mathrm{Hom}(\E_1, \E_2)}_X$. The fact that $p$ is a fiberwise isometry over $SM$ can be seen as follows: using $\X p = 0$, one gets 
\[
	\nabla_X^{\End(\E_1)} (p^*p) = (\X p)^* p + p^* \X p = 0,
\]
where $\nabla^{\End(\E_1)}$ is the endomorphism connection on $\End(\E_1)$. Thus by invoking Corollary \ref{corollary:remark}, and using that $p^*p(z_\star) = \mathbbm{1}_{z_\star}$, we get that $p^* p \equiv \mathbbm{1}$, which shows that $p$ is a fibrewise isometry.

In particular, this implies that $\E_1 = \pi^*E_1 \simeq \pi^*E_2 = \E_2$ are isomorphic over $SM$. If we further assume that $(M,g)$ has negative sectional curvature, we can apply Proposition \ref{theorem:finite}, to deduce that $p$ has finite Fourier degree. Restricting to some $x \in M$, the bundles $\E_1$ and $\E_2$ over $S_xM \simeq \Ss^n$ are trivial and can thus be identified with $\F^r$. Hence $p$ defines a polynomial map $p \colon \Ss^n \to \mathrm{SO}(r)$ (or $\mathrm{SU}(r)$ in the complex case -- as in the proof of Lemma \ref{lemma:finite}, one can reduce from $\mathrm{U}(r)$ to $\mathrm{SU}(r)$) which by the assumption $r \leq q_{\F}(n)$ and Lemma \ref{lemma:determine} has degree zero, so $p$ descends to a smooth isometry in $C^\infty(M, \mathrm{Hom}(E_1, E_2))$, parallel with respect to the induced connection on the homomorphism bundle, that is, $\nabla^{\mathrm{Hom}(E_1, E_2)} p = 0$. In turn, this gives the gauge-equivalence of $a_1$ and $a_2$, completing the proof.
\end{proof}

\bibliographystyle{alpha}
\bibliography{Biblio}

\end{document}